\pgfplotsset{compat=1.9}
	\theoremstyle{remark}
	\newtheorem{rem}{Remark}[section]
	\theoremstyle{plain}
	\newtheorem{thm}{Theorem}[section]
	\newtheorem{prop}[thm]{Proposition}
\journal{arXiv}
\begin{document}

\begin{frontmatter}

\title{A novel block non-symmetric preconditioner for mixed-hybrid finite-element-based flow simulations}


\author[HBKU]{Stefano Nardean}
\ead{snardean@hbku.edu.qa}
\author[UNIPD]{Massimiliano Ferronato}
\ead{massimiliano.ferronato@unipd.it}
\author[HBKU]{Ahmad S. Abushaikha}
\ead{aabushaikha@hbku.edu.qa}

\address[HBKU]{Division of Sustainable Development, College of Science and Engineering, Hamad Bin Khalifa University, Education City, Qatar Foundation, Doha, Qatar}
\address[UNIPD]{Department of Civil, Environmental and Architectural Engineering, University of Padova, Padova, Italy}




\begin{abstract}
In this work we propose a novel block preconditioner, labelled Explicit Decoupling Factor Approximation (EDFA), to accelerate the convergence of Krylov subspace solvers used to address the sequence of non-symmetric systems of linear equations originating from flow simulations in porous media. The flow model is discretized blending the Mixed Hybrid Finite Element (MHFE) method for Darcy's equation with the Finite Volume (FV) scheme for the mass conservation. The EDFA preconditioner is characterized by two features: the exploitation of the system matrix decoupling factors to recast the Schur complement and their inexact fully-parallel computation by means of restriction operators. We introduce two adaptive techniques aimed at building the restriction operators according to the properties of the system at hand. The proposed block preconditioner has been tested through an extensive experimentation on both synthetic and real-case applications, pointing out its robustness and computational efficiency.
\end{abstract}

\begin{keyword}
Flow in porous media \sep Preconditioning \sep Block matrices
\end{keyword}

\end{frontmatter}


\section{Introduction}
Numerical modelling of fluid flow in porous media is a key requirement for a wide number of applications in subsurface hydrology and petroleum engineering. In general, computer simulators are fundamental tools for the proper management and exploitation of aquifer systems, as well as oil and gas fields. The growing demand for a higher accuracy of the simulation, assisted by the increasing availability of computational and storage resources, leads to a continuous development and refinement of virtual simulators. The degree of approximation of the overall numerical model is defined, first of all, by the underlying mathematical model, but also by the selected discretization scheme. Discretization schemes should handle effectively non $\mathbb{K}$-orthogonal unstructured grids, as well as highly heterogeneous and anisotropic rock/fluid properties, frequently introduced as full-tensors in the model (see, for instance, \cite{Abd2020} about the numerical issues, related to abrupt changes in permeability, in node control volume finite element discretizations). 

The Mixed Hybrid Finite Element (MHFE) method, and in general the whole class of Mixed Finite Element (MFEM) methods~\cite{Brezzi1991}, coupled with the Finite Volume (FV) method, has been gaining a growing popularity in recent years. 
The enforcement of the mass balance at the elemental level, the continuity of normal fluxes across internal faces of the discretized domain, the accuracy of the velocity and pressure fields, the possibility of handling either structured or unstructured grids and the elegant treatment of full tensor fluid properties~\cite{Younes2010,Mose1994,Chavent1991} have made the MHFE method attractive for several applications, such as contaminant transport~\cite{Radu2008,Yoisell2012,Brunner2014}, energy storage~\cite{Smejkal2020}, poromechanics~\cite{Niu2020,Frigo2020} and, of course, single-phase~\cite{Younes2008}, variably saturated~\cite{Bause2004,Belfort2009}, multi-phase~\cite{Fucik2011,Abushaikha2015,Fucik2019,Hou2016,Moortgat2016} and more recently compositional \cite{Abushaikha2017} flow problems. However, the MHFE method exhibits some critical aspects as well, for instance the violation of the Discrete maximum principle~\cite{Hoteit2002} and the higher number of unknowns per element, as compared to other schemes like the FV or the classical Finite element methods. In this regard, much effort was devoted to try to reduce the overall number of unknowns per cell to one per face~\cite{Younes2010,Chavent1991} and even only one per element~\cite{Younes1999}. The main focus was on triangular cells in two-dimensional (2-D) applications, however with some limitations in the shape of tetrahedra in three-dimensional (3-D) domains~\cite{Younes2004}.

Recently, a MHFE-based simulator was developed in~\cite{Hou2016} to model the two-phase flow in heterogeneous porous media. Fu{\v{c}}{\'{i}}k et al.~\cite{Fucik2019} introduced multi-component compositional flow in the previous model and focused also on the design of a parallel implementation on both CPU and GPU. A similar approach, but extended to compressible multi-phase flow, was developed in~\cite{Moortgat2016} and applied to several real-field applications.
Puscas et al.~\cite{Puscas2018} proposed a two-phase flow Multiscale MHFE (MMHFE) simulator, where much care was devoted to the design of a robust parallel implementation, while Devloo et al.~\cite{Devloo2019}, instead, introduced the Discrete Fracture Model in a 2D MMHFE model.
Abushaikha et al.~\cite{Abushaikha2017} introduced a fully implicit general-purpose MHFE-based simulator for highly heterogeneous reservoirs.

The modelling approach in~\cite{Abushaikha2017}, considered in this work as well, is characterized by three challenging properties: (i) the high number of unknowns per element (7 for hexahedra in the lowest-order Raviart-Thomas ($\mathbb{RT}_0$) space~\cite{Raviart1977}), giving rise to large-size systems of equations, (ii) the non-symmetric nature of such systems, and (iii) the inherent block structure of the discrete linearized problem, which can be exploited for the design of specific solvers. In fact, the computational efficiency of the linear solver is a key issue in a virtual simulator, since most of the overall CPU time spent in a full-transient flow simulation is allocated to address the sequence of large-size, usually ill-conditioned, linear systems of equations~\cite{Wang2018}. Given the size and sparsity degree of these systems, Krylov subspace methods~\cite{Saad2003} are usually the method of choice, but their performance needs to be boosted by means of appropriate preconditioning operators.

The main objective of this paper is the efficient solution of the systems of equations, stemming from the aforementioned MHFE-FV modelling approach, by designing a novel preconditioning technique that copes with their non-symmetric nature. Preconditioning large-size block systems of equations is still an open issue for several numerical applications. In recent years, block preconditioning techniques have been developed for different problems, such as the solution of the Navier-Stokes equations~\cite{Yang2015,Farrell2019,Bootland2019}, applied also to hemodynamic simulations~\cite{Wu2014,Liu2020}, coupled physical processes like flow and poromechanics~\cite{Castelletto2016,White2016} or Stokes-Darcy models~\cite{Chidyagwai2016}, fissure/fault mechanics~\cite{Franceschini2019} and multi-phase flow in porous media~\cite{Liu2016,Cusini2015,Roy2020}. For instance, a popular physics-based preconditioner for reservoir simulations is the Constraint Pressure Residual (CPR)~\cite{Wallis1983,Wallis1985,Cao2005}, which is the standard for commercial simulators. CPR was designed with the aim at exploiting the block structure of the system matrix and the different properties of those submatrices, resulting from the description of different kind of processes. The CPR-type algorithms are multi-level preconditioners (like SIMPLE~\cite{Patankar1980,Elman2008}), whose application (to a vector) goes through several stages during which the groups of unknowns are repeatedly updated. In its original formulation, CPR has two stages but other multi-level variants exist~\cite{Wang2018,Liu2016}. Recently, a two-stage CPR scheme, suitable for non-isothermal multi-phase flow simulations, namely Constrained Pressure-Temperature Residual (CPTR), has been designed by Roy et al.~\cite{Roy2020}. However, given the ill-conditioning and non-symmetric nature of the systems of equations originating from our modelling approach, CPR-like schemes are usually ineffectual. 

The issue of preconditioning in the framework of the MFE discretization of flow problems in porous media is not new, e.g.~\cite{Perugia2000,Maryska2000}, but the resulting systems had the typical structure of symmetric saddle-point problems \cite{Benzi2005}. The main feature of our block preconditioner is twofold: (i) the exploitation of the block matrix decoupling factors to recast the Schur complement, and (ii) their approximated computation by means of appropriate restriction and prolongation operators. The overall preconditioning approach was originally devised in~\cite{Ferronato2019} for coupled flow/poromechanical models, and later on extended to contact mechanics~\cite{Franceschini2019}. The reference model for the development of our preconditioner is the basic MHFE-FV discretized single-phase flow in porous media. However, this represents the first stage of a more extensive research project aimed at designing an algebraic preconditioning framework for a MHFE-FV multi-phase and multi-component reservoir simulator.

The rest of the paper is organized as follows. The model problem, together with the algebraic properties of the system matrix, is first presented, then the block-structured preconditioning framework is introduced and tested in four challenging applications. The experimental stage helped highlight advantages and drawbacks of the proposed preconditioner, which are reported in the discussion section. The conclusions and hints on the ongoing and future work finally close the paper. 


\section{MHFE-FV model of single-phase flow in porous media}
The set of equations governing the single-phase flow in porous media consists of the \emph{mass conservation} and \emph{Darcy's law}. The monolithic solution approach addresses these equations simultaneously by means of a fully-implicit coupling.

\subsection{Governing equations}
Consider the finite porous domain $\Omega \subset \mathbb{R}^3$, its boundary $\Gamma$ and their union $\hat{\Omega}=\Omega \cup \Gamma$. $\Gamma_p$ and $\Gamma_{\bm{v}}$ are partitions of $\Gamma$ such that $\Gamma_p \cup \Gamma_{\bm{v}} = \Gamma$ and $\Gamma_p \cap \Gamma_{\bm{v}} = \varnothing$. Let $t$ and $\mathbb{T}=]0, T [$ indicate the time variable and the simulated open temporal domain, respectively.
Denoting with $s: \Omega \times \mathbb{T} \to \mathbb{R}$ the source or sink term, $p: \hat{\Omega} \times [0,T] \to \mathbb{R}$ the fluid pressure, $\bm{v}: \hat{\Omega} \times [0,T] \to \mathbb{R}^3$ the velocity vector and $c: \hat{\Omega}\to\mathbb{R}^+$ the specific storage coefficient, representative of both the fluid and porous matrix compressibilities, the set of governing PDEs reads:
\begin{linenomath}
\begin{subequations}
\begin{align}
    \bm{v} = -\frac{K}{\gamma} \nabla p & & \text{on} \ \Omega \times \mathbb{T} & & & (\text{Darcy's law}),  \label{eq:darcy} \\
    \nabla \cdot \bm{v} + c \dot{p}= s & & \text{on} \ \Omega \times \mathbb{T} & & &  (\text{mass conservation}), \label{eq:mass_balance}
\end{align}
\end{subequations}
\end{linenomath}
where the symbol $\nabla$ indicates the gradient operator, $\nabla\cdot$ the divergence operator and $\dot{()}$ the derivative with respect to time. In equation~\eqref{eq:darcy}, $\gamma$ is the fluid specific weight and $K$ is the conductivity tensor, assumed to be symmetric and positive definite (SPD). The gravitational term is here neglected. The specific storage coefficient $c$ in equation~\eqref{eq:mass_balance} can be expressed as $c = \gamma(\alpha + \phi \beta)$ where $\alpha$ is the soil compressibility, $\phi$ the medium porosity and $\beta$ the fluid volumetric compressibility~\cite{Gambolati2015}.
The solution to the system of equations~\eqref{eq:darcy} and~\eqref{eq:mass_balance} is a well-posed problem provided that a set of appropriate initial and boundary conditions is supplied:
\begin{linenomath}
\begin{subequations}
\begin{align}
    p|_{t=0} &= p_0 & & \text{in} \ \hat{\Omega}   & & \text{(initial fluid pressure),}\\
    p &= \overline{p} & & \text{on} \ \Gamma_p \times \mathbb{T} & & \text{(prescribed fluid pressure),}\\
    - \frac{K}{\gamma} \nabla p \cdot \bm{n} &= \overline{v_n}  & & \text{on} \ \Gamma_{\bm{v}} \times \mathbb{T} & & \text{(prescribed Darcy's flux),} \label{eq:prescr_flow}
\end{align}
\end{subequations}
\end{linenomath}
for assigned functions $p_0: \hat{\Omega} \to \mathbb{R}$, $\overline{p}: \Gamma_p \times \mathbb{T} \to \mathbb{R}$, and $\overline{v_n}: \Gamma_{\bm{v}} \times \mathbb{T} \to \mathbb{R}$. In equation~\eqref{eq:prescr_flow}, $\bm{n}$ denotes the outer unit normal vector to $\Gamma_{\bm{v}}$.

\subsection{Discretization of the governing equations}
The model domain is partitioned into non-overlapping hexahedral elements, which accommodate, as shown in Figure~\ref{fig:loc_unknown}, two types of pressure unknowns, located on each face barycentre, $\pi$, and on the element centroid, $p^E$. The former act the part of Lagrange multipliers and express the face average pressure, whereas the latter represents the average elemental value. 

Let $\mathcal{E}^h$ and $\mathcal{F}^h$ be the collections of elements and faces of the discretized domain, respectively. In our modelling approach, equation~\eqref{eq:darcy} is discretized by means of the MHFE method, using the $\mathbb{RT}_0$ space to approximate the velocity $\bm{v}$ and the 
$\mathbb{P}_0$ space 
for the pressure 
$\hat{p}$ and Lagrange multipliers $\hat{\pi}$: 
\begin{linenomath}
\begin{subequations}
\begin{align}
    \mathcal{V}^h &= \left\{ \bm{v} \mid \bm{v} \in H(\text{div},\Omega), -\bm{v} \cdot \bm{n} = \overline{v_n} \ \text{on} \ \Gamma_{\bm{v}}, \ \bm{v}|_E \in \mathbb{RT}_0(E), \ \forall E \in \mathcal{E}^h \right\},  \\
    \mathcal{L}^h &= \left\{ \hat{p} \mid \hat{p} \in L^2(\Omega), \ \hat{p}|_E \in \mathbb{P}_0(E), \ \forall E \in \mathcal{E}^h \right\},  \\
    \mathcal{M}^h &= \left\{ \hat{\pi} \mid \hat{\pi} \in L^2(\Psi), \hat{\pi} = \overline{p} \ \text{on} \ \Gamma_p, \ \hat{\pi}|_F \in \mathbb{P}_0(F), \ \forall F \in \mathcal{F}^h \right\},
\end{align}
\end{subequations}
\end{linenomath}
where $L^2(\Omega)$ and $L^2(\Psi)$ denote the spaces of square Lebesgue-integrable functions on the domain $\Omega$ and the union of elemental faces $\Psi$, respectively, and $H(\text{div}, \Omega)$ is the Sobolev space of square integrable vector functions with square integrable divergence in $\Omega$~\cite{Brezzi1991}.

The $\mathcal{V}^h$ trial space for 3-D problems is generated by local piecewise trilinear vector functions, $\bm{\eta}_i^E(x,y,z)$, defined per each face $i$ of element $E$~\cite{Chavent1991,Matringe2007}. Such functions exhibit two basic properties~\cite{Mose1994}:
\begin{enumerate}
\item The flux of function $\bm{\eta}_i^E$ is unitary across face $i$ and null elsewhere:
\begin{linenomath}
\begin{equation}
    \int_{A_j} \bm{\eta}_i^E \cdot \bm{n}_j \ dA = \delta_{ij}, \quad i,j=1,\ldots,N_f^E,
    \label{eq:MFE_property_1}
\end{equation}
\end{linenomath}
where $\bm{n}_j$ denotes the outer normal at face $j$, $A_j$ the relevant area, $\delta_{ij}$ is the Kronecker delta and $N_f^E$ is the number of faces of element $E$. From equation~\eqref{eq:MFE_property_1} it follows that $\bm{\eta}_i^E$ has a continuous normal component at face $i$, so the normal fluxes are also continuous.
\item The integral of the divergence of $\bm{\eta}_i^E$ is unitary over element $E$:
\begin{linenomath}
\begin{equation}
    \int_{\Omega^E} \nabla \cdot \bm{\eta}_i^E \ d \Omega = 1,  \quad i=1,\ldots,N_f^E,
    \label{eq:MFE_property_2}
\end{equation}
\end{linenomath}
where $\Omega^E$ is the elemental volume.
\end{enumerate}
Darcy's velocity $\bm{v}$ is approximated at the elemental level by a linear combination of the basis functions $\bm{\eta}_j^E(x,y,z)$~\cite{Younes2010}:
\begin{linenomath}
\begin{equation}
    \bm{v}^E = \sum_{j=1}^{N_f^E} q_j^E \bm{\eta}_j^E(x,y,z),
    \label{eq:velocity_MHFEM}
\end{equation}
\end{linenomath}
where $q_j^E$ represents the flux across face $j$.

Let $\left\{ \xi^E \right\}_{E \in N_e}$ be the set of basis functions for $\mathcal{L}^h$, where $N_e$ is the number of elements in the grid, 
such that $\xi^E(\bm{x})=1$ if $\bm{x} \in E$ and $\xi^E(\bm{x})=0$ if $\bm{x} \notin E$. Similarly, 
the basis for $\mathcal{M}^h$, $\left\{ \zeta_f \right\}_{f \in N_f}$, with $N_f$ the number of faces in the grid, consists of functions such that $\zeta_f(\bm{x})=1$ if $\bm{x} \in F$ and $\zeta_f(\bm{x})=0$ if $\bm{x} \notin F$. Therefore, the pressure and Lagrange multiplier fields read:
\begin{linenomath}
\begin{equation}
    \hat{p}=\sum_{E=1}^{N_e} \xi^E p^E \qquad \text{and} \qquad \hat{\pi}=\sum_{f=1}^{N_f} \zeta_f \pi_f.
    \label{eq:Lebesgue_press}
\end{equation}
\end{linenomath}
The Galerkin weak form of equation~\eqref{eq:darcy} is written element-by-element as:
\begin{linenomath}
\begin{equation}
    \gamma \int_{\Omega^E} \bm{\eta}_i^{E,T} {K^{E}}^{-1} \bm{v}^E \ d\Omega = -\int_{\Omega^E} \bm{\eta}_i^{E,T} \nabla p \ d\Omega, \qquad i=1,\ldots,N_f^E.
    \label{eq:fluxes_MHFEM}
\end{equation}
\end{linenomath}
Applying the Green-Gauss lemma to the Right-Hand Side (RHS) of equation~\eqref{eq:fluxes_MHFEM} and substituting equations~\eqref{eq:MFE_property_1}, \eqref{eq:MFE_property_2} and \eqref{eq:Lebesgue_press} entails:
\begin{linenomath}
\begin{equation}
\begin{split}
    -\int_{\Omega^E} \bm{\eta}_i^{E,T} \nabla p \ d\Omega & = \int_{\Omega^E} \nabla\cdot\bm{\eta}_i^E \ p \ d\Omega - \sum_{j=1}^{N_f^E} \int_{A_j} \bm{\eta}_i^E \cdot \bm{n}_j \ p \ dA \\
    & = p^E - \pi_i^E, \qquad i=1,\ldots,N_f^E,
\end{split}
\label{eq:fluxes_MHFEM_1}
\end{equation}
\end{linenomath}
where the superscript on $\pi_i^E$ indicates that those unknowns belongs to element $E$.
Introducing equations~\eqref{eq:velocity_MHFEM} and \eqref{eq:fluxes_MHFEM_1} in \eqref{eq:fluxes_MHFEM} yields:
\begin{linenomath}
\begin{equation}
    \gamma \sum_{j=1}^{N_f^E} \int_{\Omega^E} {\bm{\eta}_i^E}^T {K^{E}}^{-1} \bm{\eta}_j^E \ d\Omega \ q_j^E = p^E - \pi_i^E, \qquad i=1,\ldots,{N_f^E}.
    \label{eq:fluxes_MHFEM_2}
\end{equation}
\end{linenomath}
\\ Defining the elementary matrices $B^E \in \mathbb{R}^{N_f^E \times N_f^E}$, whose components are~\cite{Younes2010}:
\begin{linenomath}
\begin{equation}
    B_{ij}^E = \gamma \int_{\Omega^E} {\bm{\eta}_i^E}^T {K^{E}}^{-1} \bm{\eta}^E_j \ d\Omega, \quad i,j=1, \ldots, N_f^E,
    \label{eq:B_ij}
\end{equation}
\end{linenomath}
the local final expression for equation~\eqref{eq:fluxes_MHFEM_2} reads:
\begin{linenomath}
\begin{equation}
    \bm{q}^E = {B^{E}}^{-1}(p^E\bm{1} - \bm{\pi}^E),
    \label{eq:fluxes_MHFEM_3}
\end{equation}
\end{linenomath}
which allows to link the face fluxes with the local pressure differences, being $\bm{q}^E$ and $\bm{\pi}^E$ the vectors gathering the interface fluxes and pressures of element $E$ and $\bm{1}\in \mathbb{R}^{N_f^E}$ the vector of unitary components. Being $K^E$ SPD, $B^E$ is so as well.
The numerical evaluation of integrals~\eqref{eq:B_ij} may be troublesome when performed in the model space with general elements. In this regard, Piola transformation comes into play, allowing to map the element in the model space into the prototype hexahedron in the reference space, perform the integrals and then map back the result to the physical space (see for instance~\cite{Matringe2007,Huyakorn1983,Zienkiewicz2000}).
\begin{figure}
\centering
\begin{tikzpicture} [scale=0.35]
\draw (0,0) -- ++(8,0) -- ++(0,8) -- ++(-8,0) -- ++(0,-8);
\draw (8,0) -- ++(3,4);
\draw (8,8) -- ++(3,4);
\draw (0,8) -- ++(3,4) -- ++(8,0) -- ++(0,-8);
\draw[dashed] (0,0) -- ++(3,4) -- ++(0,8);
\draw[dashed] (3,4) -- ++(8,0);
\node[align=center, font = \normalsize] at (4,4){$\blacktriangle$};
\node[align=center, font = \normalsize] at (5.5,2){$\blacktriangle$};
\node[align=center, font = \normalsize] at (9.5,6){$\blacktriangle$};
\node[align=center, font = \normalsize] at (5.5,10){$\blacktriangle$};
\node[align=center, font = \normalsize] at (1.5,6){$\blacktriangle$};
\node[align=center, font = \normalsize] at (7,8){$\blacktriangle$};
\node[align=center, font = \normalsize] at (5.5,6){$\bigstar$};
\fill[opacity=0.25, cyan] (0,0) -- ++(8,0) -- ++(0,8) -- ++(-8,0) -- ++(0,-8);
\fill[opacity=0.5, cyan] (0,8) -- ++(8,0) -- ++(3,4) -- ++(-8,0) -- ++(-3,-4);
\fill[opacity=0.5, cyan] (8,0) -- ++(3,4) -- ++(0,8) -- ++(-3,-4) -- ++(0,-8);
\node [align=right] at (18.2,2.7){{\normalsize $\bigstar$} $\to$ element pressure $p^E$};
\node [align=right] at (17.5,1){{\normalsize $\blacktriangle$} $\to$ face pressure $\pi_i$};
\end{tikzpicture}
\caption{Location of the unknowns in the hexahedral reference element.}
\label{fig:loc_unknown}
\end{figure}
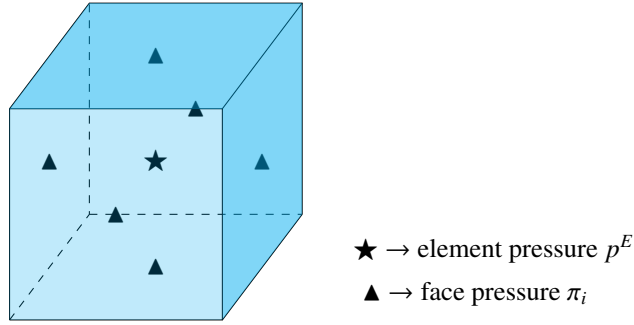

For the discretization of the mass balance equation~\eqref{eq:mass_balance}, we use a FV approximation in space. Choosing the elements of the grid as \emph{control volumes}, 
we have:
\begin{linenomath}
\begin{equation}
    \int_{\Omega^E} c \dot{p} \ d\Omega + \int_{\Omega^E} \nabla \cdot \bm{v} \ d\Omega = \int_{\Omega^E} s \ d\Omega, \quad E=1,\ldots,N_e. 
    \label{eq:mass_bal_FV}
\end{equation}
\end{linenomath}

Recognizing that the second term on the Left-Hand Side (LHS) is equivalent to the sum of the fluxes across the faces of the element, equation~\eqref{eq:mass_bal_FV} gives:
\begin{linenomath}
\begin{equation}
    \Omega^E \overline{c}^E \frac{p^E_{n+1} - p^E_n}{\Delta t_n} + \sum_{i=1}^{N_f^E} q_i^{E,E'} = \Omega^E \overline{s}^E, \quad E=1,\ldots,N_e,
\label{eq:discr_bal_FV_2}
\end{equation}
\end{linenomath}
where a first-order backward Finite Difference scheme has been introduced for the integration in time.
In equation~\eqref{eq:discr_bal_FV_2}, the superscript $n$ indicates the previous time step, $n+1$ the actual one, $\Delta t_n = t_{n+1}-t_n$, $\overline{c}^E$ and $\overline{s}^E$ are the mean values of the storage coefficient and source terms in $E$, and $q_i^{E,E'}$ is the fluid flux exchanged by the adjacent elements $E$ and $E'$ across face $i$.
The expression for the inter-element flux $q_i^{E,E'}$ results from strongly imposing the continuity of local fluxes across face $i$ (see appendix A in \citep{Abushaikha2017} for details):
\begin{linenomath}
\begin{equation}
    q_i^{E,E'} = \frac{{B_{ii}^{E'}}^{-1} \Lambda^E - {B_{ii}^{E}}^{-1} \Lambda^{E'}}{B_{ii}^E + B_{ii}^{E'}},
    \label{eq:fluxes_MHFEM_STRONG}
\end{equation}
\end{linenomath}
where
\begin{linenomath}
\[
    \Lambda^E = L_{B_i^E} p^E - \sum_{j=1}^{N_f^E} {B_{ij}^E}^{-1} \pi_j^E \qquad \text{with} \ i \ne j
    , \qquad 
    L_{B_i^E} = \sum_{j=1}^{N_f^E} {B_{ij}^{E}}^{-1}.
\]
\end{linenomath}
%
Notice that the main consequences of this formulation are a tightened tying of the local fluxes and the enlargement of the native stencil, since $q_i^{E,E'}$ depends not only on the pressure unknowns of $E$ but also on $E'$'s.

\subsection{The MHFE-FV system of equations}
The solution to the model problem is achieved by solving the system of equations \eqref{eq:discr_bal_FV_2} at each time step, along with the strong enforcement of 
the continuity of fluxes across the faces of the grid:
\begin{linenomath}
\begin{equation}
    q_i^E + q_i^{E'} = 0, \qquad i=1, \ldots, N_f,
    \label{eq:cont_fluxes}
\end{equation}
\end{linenomath}
Along the boundary, equation~\eqref{eq:cont_fluxes} allows also to apply Neumann conditions in a strong form, just by substituting the RHS accordingly and dropping $q_i^{E'}$. Notice that equation~\eqref{eq:cont_fluxes} uses the fluxes as expressed in~\eqref{eq:fluxes_MHFEM_3}, unlike equation~\eqref{eq:discr_bal_FV_2}.
Finally, it is implicitly assumed that $\pi_i^{E}=\pi_i^{E'}$ due to continuity reasons.

The resulting system exhibits a $2 \times 2$ block structure, which is solved in a fully-implicit framework, with two types of unknowns:
\begin{linenomath}
\begin{equation}
\mathcal{A}\bm{u} = \bm{b} \ \ \Rightarrow \ \
    \begin{bmatrix}
        A_{\pi \pi} & A_{\pi p} \\
        A_{p \pi} & A_{p p}
    \end{bmatrix}
    \begin{bmatrix}
        \bm{\pi}^{n+1} \\
        \bm{p}^{n+1}
    \end{bmatrix}
    =
    \begin{bmatrix}
        \bm{f}_{\pi} \\
        \bm{f}_p
    \end{bmatrix},
    \label{eq:system}
\end{equation}
\end{linenomath}
where $A_{\pi \pi} \in \mathbb{R}^{N_f \times N_f}$, $A_{p p} \in \mathbb{R}^{N_e \times N_e}$ (with $N_f > N_e$), $\bm{\pi}^{n+1}$ and $\bm{p}^{n+1}$ gather the face and element pressure unknowns, and $\bm{f}_{\pi}$ and $\bm{f}_p$ are the relevant components of the known term. In equation~\eqref{eq:system}, the Lagrange multipliers and the element pressure unknowns are coupled by means of the rectangular blocks $A_{\pi p}$ and $A_{p \pi}$. As to the properties of $\mathcal{A}$, this matrix has a flipped generalized saddle-point structure, it is sparse, non-symmetric and usually ill-conditioned. In particular, $A_{p p}$ has a symmetric structure, though it is not symmetric, $A_{\pi \pi}$ is a symmetric negative definite matrix, and $A_{\pi p} \ne \pm A_{p \pi}^T$. 
As mentioned before, in the context of single-phase flow, system~\eqref{eq:system} is \emph{linear}.

\section{The Explicit Decoupling Factor Approximation preconditioner}
Solving accurately and efficiently the sequence of linear systems~\eqref{eq:system} arising from a MHFE-FV unsteady flow simulation is the major purpose of this study.
Iterative Krylov subspace solvers are mandatory to address the large-size and sparse systems of equations that stem from real-world 3-D models, especially for the low memory requirements and better scalability as compared to direct solvers~\cite{Saad2003}. When the system matrix is non-symmetric, the Bi-Conjugate Gradient Stabilized (Bi-CGStab)~\citep{VanderVorst1992} or the Generalized Minimal Residual (GMRES)~\citep{Saad1986} methods are usually the selected algorithms. However, improving their performance by supplying an appropriate preconditioning operator $\mathcal{P}^{-1}$ is key in order to guarantee a fast and smooth convergence.

It is well-known that an \emph{effective} preconditioner is an operator whose application to a vector should resemble as much as possible that of the inverse $\mathcal{A}^{-1}$ of the system matrix~\cite{Benzi2002,Wathen2015}. 
Therefore, a good starting point for the design of our preconditioner is to consider $\mathcal{A}^{-1}$ and take advantage of its block structure, as it is usually done in saddle-point and general block problems~\cite{Benzi2005,Ferronato2019,Bergamaschi2008,Ferronato2010}. The block LDU decomposition of the system matrix reads:
\begin{linenomath}
\begin{equation}
    \mathcal{A}
    =
    \begin{bmatrix}
        I &  \\
        A_{p \pi} A_{\pi \pi}^{-1} & I
    \end{bmatrix}
    \begin{bmatrix}
        A_{\pi \pi} &   \\
                    & S
    \end{bmatrix}
    \begin{bmatrix}
        I & A_{\pi \pi}^{-1} A_{\pi p} \\
          & I
    \end{bmatrix},
\label{eq:LDU_decomp}
\end{equation}
\end{linenomath}
where $S = A_{p p} - A_{p \pi} A_{\pi \pi}^{-1} A_{\pi p}$ is the so-called Schur complement. The exact inverse of $\mathcal{A}$ in a factorized form reads:
\begin{linenomath}
\begin{equation}
    \mathcal{A}^{-1}
    =
    \begin{bmatrix}
       I & -A_{\pi \pi}^{-1} A_{\pi p} \\
         & I
    \end{bmatrix}
    \begin{bmatrix}
       A_{\pi \pi}^{-1} &  \\
              & S^{-1}
    \end{bmatrix}
    \begin{bmatrix}
       I &  \\
       -A_{p \pi} A_{\pi \pi}^{-1} & I
    \end{bmatrix},
    \label{eq:inv_A_fac}
\end{equation}
\end{linenomath}
where the two decoupling factors are defined as:
\begin{linenomath}
\begin{equation}
    G = -A_{p \pi} A_{\pi \pi}^{-1} \quad \text{and} \quad
    F = -A_{\pi \pi}^{-1} A_{\pi p}.
\label{eq:dec_fac}
\end{equation}
\end{linenomath}
The decoupling factors $F$ and $G$ are also used to compute the Schur complement as:
\begin{linenomath}
\begin{equation}
\begin{split}
    S &= A_{p p} - A_{p \pi} A_{\pi \pi}^{-1} A_{\pi p} \\
    &= A_{p p} - A_{p \pi} A_{\pi \pi}^{-1} A_{\pi \pi} A_{\pi \pi}^{-1} A_{\pi p} \\
    &= A_{p p} - H,
    \label{eq:schur_compl}
\end{split}
\end{equation}
\end{linenomath}
with $H=GA_{\pi \pi}F$.

Considering equations~\eqref{eq:dec_fac}, $F$ and $G$ can be computed explicitly by solving two independent sets of multiple right-hand side (MRHS) systems:
\begin{linenomath}
\begin{subequations}
\begin{align}
    A_{\pi \pi}^T G^T &= -A_{p \pi}^T,  \label{eq:EDFA_systems_1} \\
    A_{\pi \pi}F &= - A_{\pi p}.  \label{eq:EDFA_systems_2}
\end{align}
\end{subequations}
\end{linenomath}
Of course, such an operation cannot be performed exactly because $F$ and $G$ are dense, hence proper approximations have to be introduced. 
The key feature of the proposed approach, denoted as Explicit Decoupling Factor Approximation (EDFA) preconditioner, is the computation of sparse explicit approximations for $F$ and $G$, $\widetilde{F}$ and $\widetilde{G}$, respectively, by means of proper \emph{restriction operators}.
The approximate decoupling factors $\widetilde{F}$ and $\widetilde{G}$ are used to compute a sparsified Schur complement $\widetilde{S}$:
\begin{linenomath}
\begin{equation}
    \widetilde{S} = A_{p p} - \widetilde{G} A_{\pi \pi} \widetilde{F} = A_{p p} - \widetilde{H}.
    \label{eq:sparse_S}
\end{equation}
\end{linenomath}
Recalling equation~\eqref{eq:inv_A_fac}, the final algebraic expression of the EDFA preconditioner reads:
\begin{linenomath}
\begin{equation}
    \mathcal{P}^{-1}
    =
    \begin{bmatrix}
       I & -\widetilde{A}_{\pi \pi}^{-1} A_{\pi p} \\
         & I
    \end{bmatrix}
    \begin{bmatrix}
       \widetilde{A}_{\pi \pi}^{-1} & \\
        & \widetilde{S}^{-1}
    \end{bmatrix}
    \begin{bmatrix}
       I &  \\
       - A_{p \pi} \widetilde{A}_{\pi \pi}^{-1} & I
    \end{bmatrix},
    \label{eq:EDFA_prec}
\end{equation}
\end{linenomath}
where $\widetilde{A}_{\pi \pi}^{-1}$ and $\widetilde{S}^{-1}$ are inexact applications of the inverse of the leading block $A_{\pi \pi}$ and the approximate Schur complement, respectively. 

\begin{rem}
\label{rem:1}
The approximate decoupling factors $\widetilde{F}$ and $\widetilde{G}$ are used only for the computation of $\widetilde{S}$ and do not replace the relevant terms in the triangular factors in equation~\eqref{eq:EDFA_prec}. In this sense, the EDFA algorithm can be regarded as a member of the class of \emph{mixed constraint preconditioners} \cite{Bergamaschi2008,Ferronato2012}, where a twofold approximation for the inverse of the leading block is inherently introduced. Similarly, it can be also viewed as an example of application in a non-symmetric context of the \emph{multigrid reduction} framework, e.g.~\cite{Bui2018,Bui2020}, where face and elemental pressures play the role of \emph{fine} and \emph{coarse} nodes, respectively, and $\widetilde{F}$ and $\widetilde{G}$ are approximations of the optimal restriction and prolongation operators from the fine to the coarse grid.
\end{rem}

The approximation of the decoupling factors $F$ and $G$ is performed by solving the sequence of MRHS systems~\eqref{eq:EDFA_systems_1} and \eqref{eq:EDFA_systems_2} inexactly in properly restricted subspaces. For the sake of simplicity, we refer to system~\eqref{eq:EDFA_systems_1}, but the same developments can be easily extended to~\eqref{eq:EDFA_systems_2}.
The $m$-th system to be solved reads:
\begin{linenomath}
\begin{equation}
    -A_{\pi \pi} \bm{g}^{(m),T} = \bm{a}_{p \pi}^{(m),T},
    \label{eq:syst_MRHS}
\end{equation}
\end{linenomath}
where $A_{\pi \pi}=A_{\pi \pi}^T$ for symmetry reasons, $\bm{g}^{(m),T} = G^T \bm{e}^{(m)}$, $\bm{a}_{p \pi}^{(m),T} = A_{p \pi}^T \bm{e}^{(m)}$ and $\bm{e}^{(m)}$ is the $m$-th vector of the canonical basis of $\mathbb{R}^{N_e}$, which plays the role of restriction operator over columns. 
The minus sign has been introduced at both sides of \eqref{eq:syst_MRHS} to obtain an SPD problem, since $A_{\pi \pi}$ is negative definite.
Let us now consider the set $Q=\{1,\ldots,N_f\}\subset\mathbb{N}$ and a sequence of (possibly overlapping) subsets $Q^{(m)} \subseteq Q$, whose size is $|Q^{(m)}| = s^{(m)}$, with $m=1,\ldots, N_e$.
The $m$-th restriction operator over rows,
\begin{linenomath}
\begin{equation}
    R_{r}^{(m)}: \mathbb{R}^{N_f} \to \mathbb{R}^{s^{(m)}}
\end{equation}
\end{linenomath}
is expressed as:
\begin{linenomath}
\begin{equation}
    R_{r}^{(m)} =
    \begin{bmatrix}
        \bm{f}_{Q_1^{(m)}}^T \\
        \vdots \\
        \bm{f}_{Q_{s^{(m)}}^{(m)}}^T
    \end{bmatrix},
\label{eq:r_r^m}
\end{equation}
\end{linenomath}
where $\bm{f}_\ell$ is the $\ell$-th column vector of the canonical basis of $\mathbb{R}^{N_f}$ and $Q_i^{(m)}$ is the $i$-th member of $Q^{(m)}$. 
The application of the operator $R_r^{(m)}$ to equation~\eqref{eq:syst_MRHS} leads to the following system (Figure \ref{fig:EBDFA_sylv}):
\begin{linenomath}
\begin{equation}
    -A_{\pi \pi}^{(m)} \widetilde{\bm{g}}^{(m),T} = R_r^{(m)} \bm{a}_{p \pi}^{(m),T},
    \label{eq:syst_MRHS_res}
\end{equation}
\end{linenomath}
where $A_{\pi \pi}^{(m)} = R_r^{(m)} A_{\pi \pi} R_r^{(m),T}$ is a symmetric restriction of $A_{\pi \pi}$ to the entries in the rows and columns with indices in $Q^{(m)}$ and $\widetilde{\bm{g}}^{(m),T} = R_r^{(m)} \bm{g}^{(m),T}$ is the restriction of the $m$-th row of $G$ to the entries in the columns with indices in $Q^{(m)}$.
Since $-A_{\pi \pi}^{(m)}$ is a symmetric square submatrix of the SPD matrix $-A_{\pi \pi}$, it is guaranteed to be SPD as well. The sequence of systems~\eqref{eq:syst_MRHS_res} can be inexpensively solved by an inner direct solver, provided that the cardinality of $Q^{(m)}$ is small enough. 
\begin{figure}[tb]
\centering
\begin{tikzpicture}[scale=0.18]
\node [align=center] at (9,22){$-A_{\pi \pi}$};
\node [align=center] at (41.5,22){$A_{p \pi}^T$};
\node [align=center] at (27.5,22){$\widetilde{G}^T$};
\node [align=center] at (6.5,-4){$-A_{\pi \pi}^{(m)}$};
\node [align=center] at (24,-2.5){$\widetilde{\bm{g}}^{(m),T}$};
\node [align=center] at (45,-5){$R_r^{(m)} \bm{a}_{p \pi}^{(m),T}$};

%
%
%
%
\draw [thin, dashed] (2,0) rectangle ++ (1,20);
\draw [thin, dashed] (5,0) rectangle ++ (1,20);
\draw [thin, dashed] (9,0) rectangle ++ (1,20);
\draw [thin, dashed] (0,17) rectangle ++ (20,1);
\draw [thin, dashed] (0,14) rectangle ++ (20,1);
\draw [thin, dashed] (0,10) rectangle ++ (20,1);
\draw [thin] (0,0) rectangle (20,20);

\fill [opacity=0.6, cyan] (4,0) rectangle ++ (1,1);
\fill [opacity=0.6, cyan] (9,0) rectangle ++ (1,1);
\fill [opacity=0.6, cyan] (14,0) rectangle ++ (1,1);
\fill [opacity=0.6, cyan] (17,0) rectangle ++ (1,1);
\fill [opacity=0.6, cyan] (19,0) rectangle ++ (1,1);
\draw [thin] (4,0) rectangle ++ (1,1);
\draw [thin] (9,0) rectangle ++ (1,1);
\draw [thin] (14,0) rectangle ++ (1,1);
\draw [thin] (17,0) rectangle ++ (1,1);
\draw [thin] (19,0) rectangle ++ (1,1);
%
\fill [opacity=0.6, cyan] (9,1) rectangle ++ (1,1);
\fill [opacity=0.6, cyan] (13,1) rectangle ++ (1,1);
\fill [opacity=0.6, cyan] (18,1) rectangle ++ (1,1);
\draw [thin] (9,1) rectangle ++ (1,1);
\draw [thin] (13,1) rectangle ++ (1,1);
\draw [thin] (18,1) rectangle ++ (1,1);
%
\fill [opacity=0.6, cyan] (3,2) rectangle ++ (1,1);
\fill [opacity=0.6, cyan] (11,2) rectangle ++ (1,1);
\fill [opacity=0.6, cyan] (17,2) rectangle ++ (1,1);
\fill [opacity=0.6, cyan] (19,2) rectangle ++ (1,1);
\draw [thin] (3,2) rectangle ++ (1,1);
\draw [thin] (11,2) rectangle ++ (1,1);
\draw [thin] (17,2) rectangle ++ (1,1);
\draw [thin] (19,2) rectangle ++ (1,1);
%
\fill [opacity=0.6, cyan] (4,3) rectangle ++ (1,1);
\fill [opacity=0.6, cyan] (10,3) rectangle ++ (1,1);
\fill [opacity=0.6, cyan] (13,3) rectangle ++ (1,1);
\fill [opacity=0.6, cyan] (15,3) rectangle ++ (1,1);
\fill [opacity=0.6, cyan] (16,3) rectangle ++ (1,1);
\draw [thin] (4,3) rectangle ++ (1,1);
\draw [thin] (10,3) rectangle ++ (1,1);
\draw [thin] (13,3) rectangle ++ (1,1);
\draw [thin] (15,3) rectangle ++ (1,1);
\draw [thin] (16,3) rectangle ++ (1,1);
%
\fill [opacity=0.6, cyan] (6,4) rectangle ++ (1,1);
\fill [opacity=0.6, cyan] (12,4) rectangle ++ (1,1);
\fill [opacity=0.6, cyan] (15,4) rectangle ++ (1,1);
\fill [opacity=0.6, cyan] (16,4) rectangle ++ (1,1);
\draw [thin] (6,4) rectangle ++ (1,1);
\draw [thin] (12,4) rectangle ++ (1,1);
\draw [thin] (15,4) rectangle ++ (1,1);
\draw [thin] (16,4) rectangle ++ (1,1);
%
\fill [opacity=0.6, cyan] (0,5) rectangle ++ (1,1);
\fill [opacity=0.6, cyan] (6,5) rectangle ++ (1,1);
\fill [opacity=0.6, cyan] (11,5) rectangle ++ (1,1);
\fill [opacity=0.6, cyan] (14,5) rectangle ++ (1,1);
\fill [opacity=0.6, cyan] (19,5) rectangle ++ (1,1);
\draw [thin] (0,5) rectangle ++ (1,1);
\draw [thin] (6,5) rectangle ++ (1,1);
\draw [thin] (11,5) rectangle ++ (1,1);
\draw [thin] (14,5) rectangle ++ (1,1);
\draw [thin] (19,5) rectangle ++ (1,1);
%
\fill [opacity=0.6, cyan] (5,6) rectangle ++ (1,1);
\fill [opacity=0.6, cyan] (13,6) rectangle ++ (1,1);
\fill [opacity=0.6, cyan] (16,6) rectangle ++ (1,1);
\fill [opacity=0.6, cyan] (18,6) rectangle ++ (1,1);
\draw [thin] (5,6) rectangle ++ (1,1);
\draw [thin] (13,6) rectangle ++ (1,1);
\draw [thin] (16,6) rectangle ++ (1,1);
\draw [thin] (18,6) rectangle ++ (1,1);
%
\fill [opacity=0.6, cyan] (8,7) rectangle ++ (1,1);
\fill [opacity=0.6, cyan] (12,7) rectangle ++ (1,1);
\fill [opacity=0.6, cyan] (15,7) rectangle ++ (1,1);
\draw [thin] (8,7) rectangle ++ (1,1);
\draw [thin] (12,7) rectangle ++ (1,1);
\draw [thin] (15,7) rectangle ++ (1,1);
%
\fill [opacity=0.6, cyan] (1,8) rectangle ++ (1,1);
\fill [opacity=0.6, cyan] (2,8) rectangle ++ (1,1);
\fill [opacity=0.6, cyan] (11,8) rectangle ++ (1,1);
\fill [opacity=0.6, cyan] (14,8) rectangle ++ (1,1);
\fill [opacity=0.6, cyan] (17,8) rectangle ++ (1,1);
\draw [thin] (1,8) rectangle ++ (1,1);
\draw [thin] (2,8) rectangle ++ (1,1);
\draw [thin] (11,8) rectangle ++ (1,1);
\draw [thin] (14,8) rectangle ++ (1,1);
\draw [thin] (17,8) rectangle ++ (1,1);
%
\fill [opacity=0.6, cyan] (3,9) rectangle ++ (1,1);
\fill [opacity=0.6, cyan] (7,9) rectangle ++ (1,1);
\fill [opacity=0.6, cyan] (10,9) rectangle ++ (1,1);
\fill [opacity=0.6, cyan] (16,9) rectangle ++ (1,1);
\draw [thin] (3,9) rectangle ++ (1,1);
\draw [thin] (7,9) rectangle ++ (1,1);
\draw [thin] (10,9) rectangle ++ (1,1);
\draw [thin] (16,9) rectangle ++ (1,1);
%
\fill [opacity=1, red] (5,10) rectangle ++ (1,1);
\fill [opacity=1, red] (9,10) rectangle ++ (1,1);
\fill [opacity=0.6, cyan] (18,10) rectangle ++ (1,1);
\fill [opacity=0.6, cyan] (19,10) rectangle ++ (1,1);
\draw [thin] (5,10) rectangle ++ (1,1);
\draw [thin] (9,10) rectangle ++ (1,1);
\draw [thin] (18,10) rectangle ++ (1,1);
\draw [thin] (19,10) rectangle ++ (1,1);
%
\fill [opacity=0.6, cyan] (2,11) rectangle ++ (1,1);
\fill [opacity=0.6, cyan] (7,11) rectangle ++ (1,1);
\fill [opacity=0.6, cyan] (8,11) rectangle ++ (1,1);
\fill [opacity=0.6, cyan] (12,11) rectangle ++ (1,1);
\draw [thin] (2,11) rectangle ++ (1,1);
\draw [thin] (7,11) rectangle ++ (1,1);
\draw [thin] (8,11) rectangle ++ (1,1);
\draw [thin] (12,11) rectangle ++ (1,1);
%
\fill [opacity=0.6, cyan] (3,12) rectangle ++ (1,1);
\fill [opacity=0.6, cyan] (7,12) rectangle ++ (1,1);
\fill [opacity=0.6, cyan] (8,12) rectangle ++ (1,1);
\fill [opacity=0.6, cyan] (10,12) rectangle ++ (1,1);
\draw [thin] (3,12) rectangle ++ (1,1);
\draw [thin] (7,12) rectangle ++ (1,1);
\draw [thin] (8,12) rectangle ++ (1,1);
\draw [thin] (10,12) rectangle ++ (1,1);
%
\fill [opacity=0.6, cyan] (0,13) rectangle ++ (1,1);
\fill [opacity=0.6, cyan] (6,13) rectangle ++ (1,1);
\fill [opacity=0.6, cyan] (14,13) rectangle ++ (1,1);
\fill [opacity=0.6, cyan] (15,13) rectangle ++ (1,1);
\draw [thin] (0,13) rectangle ++ (1,1);
\draw [thin] (6,13) rectangle ++ (1,1);
\draw [thin] (14,13) rectangle ++ (1,1);
\draw [thin] (15,13) rectangle ++ (1,1);
%
\fill [opacity=1, red] (2,14) rectangle ++ (1,1);
\fill [opacity=1, red] (5,14) rectangle ++ (1,1);
\fill [opacity=1, red] (9,14) rectangle ++ (1,1);
\fill [opacity=0.6, cyan] (13,14) rectangle ++ (1,1);
\draw [thin] (2,14) rectangle ++ (1,1);
\draw [thin] (5,14) rectangle ++ (1,1);
\draw [thin] (9,14) rectangle ++ (1,1);
\draw [thin] (13,14) rectangle ++ (1,1);
%
\fill [opacity=0.6, cyan] (1,15) rectangle ++ (1,1);
\fill [opacity=0.6, cyan] (4,15) rectangle ++ (1,1);
\fill [opacity=0.6, cyan] (16,15) rectangle ++ (1,1);
\fill [opacity=0.6, cyan] (19,15) rectangle ++ (1,1);
\draw [thin] (1,15) rectangle ++ (1,1);
\draw [thin] (4,15) rectangle ++ (1,1);
\draw [thin] (16,15) rectangle ++ (1,1);
\draw [thin] (19,15) rectangle ++ (1,1);
%
\fill [opacity=0.6, cyan] (3,16) rectangle ++ (1,1);
\fill [opacity=0.6, cyan] (7,16) rectangle ++ (1,1);
\fill [opacity=0.6, cyan] (10,16) rectangle ++ (1,1);
\fill [opacity=0.6, cyan] (17,16) rectangle ++ (1,1);
\draw [thin] (3,16) rectangle ++ (1,1);
\draw [thin] (7,16) rectangle ++ (1,1);
\draw [thin] (10,16) rectangle ++ (1,1);
\draw [thin] (17,16) rectangle ++ (1,1);
%
\fill [opacity=1, red] (2,17) rectangle ++ (1,1);
\fill [opacity=1, red] (5,17) rectangle ++ (1,1);
\fill [opacity=0.6, cyan] (8,17) rectangle ++ (1,1);
\fill [opacity=0.6, cyan] (11,17) rectangle ++ (1,1);
\draw [thin] (2,17) rectangle ++ (1,1);
\draw [thin] (5,17) rectangle ++ (1,1);
\draw [thin] (8,17) rectangle ++ (1,1);
\draw [thin] (11,17) rectangle ++ (1,1);
%
\fill [opacity=0.6, cyan] (0,18) rectangle ++ (1,1);
\fill [opacity=0.6, cyan] (1,18) rectangle ++ (1,1);
\fill [opacity=0.6, cyan] (4,18) rectangle ++ (1,1);
\fill [opacity=0.6, cyan] (11,18) rectangle ++ (1,1);
\draw [thin] (0,18) rectangle ++ (1,1);
\draw [thin] (1,18) rectangle ++ (1,1);
\draw [thin] (4,18) rectangle ++ (1,1);
\draw [thin] (11,18) rectangle ++ (1,1);
%
\fill [opacity=0.6, cyan] (0,19) rectangle ++ (1,1);
\fill [opacity=0.6, cyan] (1,19) rectangle ++ (1,1);
\fill [opacity=0.6, cyan] (6,19) rectangle ++ (1,1);
\fill [opacity=0.6, cyan] (14,19) rectangle ++ (1,1);
\draw [thin] (0,19) rectangle ++ (1,1);
\draw [thin] (1,19) rectangle ++ (1,1);
\draw [thin] (6,19) rectangle ++ (1,1);
\draw [thin] (14,19) rectangle ++ (1,1);
%
%
%
%
\draw [thin, dashed, shift={(22,20)}, rotate = -90] (0,5) rectangle ++ (20,1);
\draw [thin, shift={(22,20)}, rotate = -90] (0,0) rectangle (20,10);

%
%
\node [align=center] at (27.5,17.5){$\times$};
\node [align=center] at (27.5,14.5){$\times$};
\node [align=center] at (27.5,10.5){$\times$};
\draw [thin, shift={(22,20)}, rotate = -90] (2,5) rectangle ++ (1,1);
\draw [thin, shift={(22,20)}, rotate = -90] (5,5) rectangle ++ (1,1);
\draw [thin, shift={(22,20)}, rotate = -90] (9,5) rectangle ++ (1,1);
\node [align=center] at (34,10){$=$};
\node [align=center] at (-2,10){$-$};
%
%
%
%
%
%
%
\draw [thin, dashed, shift={(36,20)}, rotate = -90] (0,5) rectangle ++ (20,1);
\draw [thin, shift={(36,20)}, rotate = -90] (0,0) rectangle (20,10);
\fill [opacity=0.6, cyan, shift={(36,20)}, rotate = -90] (3,0) rectangle ++ (1,1);
\fill [opacity=0.6, cyan, shift={(36,20)}, rotate = -90] (5,0) rectangle ++ (1,1);
\fill [opacity=0.6, cyan, shift={(36,20)}, rotate = -90] (15,0) rectangle ++ (1,1);
\draw [thin, shift={(36,20)}, rotate = -90] (3,0) rectangle ++ (1,1);
\draw [thin, shift={(36,20)}, rotate = -90] (5,0) rectangle ++ (1,1);
\draw [thin, shift={(36,20)}, rotate = -90] (15,0) rectangle ++ (1,1);
%
\fill [opacity=0.6, shift={(36,20)}, cyan, rotate = -90] (0,1) rectangle ++ (1,1);
\fill [opacity=0.6, shift={(36,20)}, cyan, rotate = -90] (1,1) rectangle ++ (1,1);
\fill [opacity=0.6, shift={(36,20)}, cyan, rotate = -90] (9,1) rectangle ++ (1,1);
\fill [opacity=0.6, shift={(36,20)}, cyan, rotate = -90] (19,1) rectangle ++ (1,1);
\draw [thin, shift={(36,20)}, rotate = -90] (0,1) rectangle ++ (1,1);
\draw [thin, shift={(36,20)}, rotate = -90] (1,1) rectangle ++ (1,1);
\draw [thin, shift={(36,20)}, rotate = -90] (9,1) rectangle ++ (1,1);
\draw [thin, shift={(36,20)}, rotate = -90] (19,1) rectangle ++ (1,1);
%
\fill [opacity=0.6, cyan, shift={(36,20)}, rotate = -90] (7,2) rectangle ++ (1,1);
\fill [opacity=0.6, cyan, shift={(36,20)}, rotate = -90] (10,2) rectangle ++ (1,1);
\fill [opacity=0.6, cyan, shift={(36,20)}, rotate = -90] (17,2) rectangle ++ (1,1);
\draw [thin, shift={(36,20)}, rotate = -90] (7,2) rectangle ++ (1,1);
\draw [thin, shift={(36,20)}, rotate = -90] (10,2) rectangle ++ (1,1);
\draw [thin, shift={(36,20)}, rotate = -90] (17,2) rectangle ++ (1,1);
%
\fill [opacity=0.6, cyan, shift={(36,20)}, rotate = -90] (2,3) rectangle ++ (1,1);
\fill [opacity=0.6, cyan, shift={(36,20)}, rotate = -90] (6,3) rectangle ++ (1,1);
\fill [opacity=0.6, cyan, shift={(36,20)}, rotate = -90] (16,3) rectangle ++ (1,1);
\draw [thin, shift={(36,20)}, rotate = -90] (2,3) rectangle ++ (1,1);
\draw [thin, shift={(36,20)}, rotate = -90] (6,3) rectangle ++ (1,1);
\draw [thin, shift={(36,20)}, rotate = -90] (16,3) rectangle ++ (1,1);
%
\fill [opacity=0.6, cyan, shift={(36,20)}, rotate = -90] (3,4) rectangle ++ (1,1);
\fill [opacity=0.6, cyan, shift={(36,20)}, rotate = -90] (8,4) rectangle ++ (1,1);
\fill [opacity=0.6, cyan, shift={(36,20)}, rotate = -90] (12,4) rectangle ++ (1,1);
\draw [thin, shift={(36,20)}, rotate = -90] (3,4) rectangle ++ (1,1);
\draw [thin, shift={(36,20)}, rotate = -90] (8,4) rectangle ++ (1,1);
\draw [thin, shift={(36,20)}, rotate = -90] (12,4) rectangle ++ (1,1);
%
\fill [opacity=1, red, shift={(36,20)}, rotate = -90] (2,5) rectangle ++ (1,1);
\fill [opacity=1, red, shift={(36,20)}, rotate = -90] (5,5) rectangle ++ (1,1);
\fill [opacity=1, red, shift={(36,20)}, rotate = -90] (9,5) rectangle ++ (1,1);
\draw [thin, shift={(36,20)}, rotate = -90] (2,5) rectangle ++ (1,1);
\draw [thin, shift={(36,20)}, rotate = -90] (5,5) rectangle ++ (1,1);
\draw [thin, shift={(36,20)}, rotate = -90] (9,5) rectangle ++ (1,1);
%
\fill [opacity=0.6, cyan, shift={(36,20)}, rotate = -90] (3,6) rectangle ++ (1,1);
\fill [opacity=0.6, cyan, shift={(36,20)}, rotate = -90] (7,6) rectangle ++ (1,1);
\fill [opacity=0.6, cyan, shift={(36,20)}, rotate = -90] (11,6) rectangle ++ (1,1);
\draw [thin, shift={(36,20)}, rotate = -90] (3,6) rectangle ++ (1,1);
\draw [thin, shift={(36,20)}, rotate = -90] (7,6) rectangle ++ (1,1);
\draw [thin, shift={(36,20)}, rotate = -90] (11,6) rectangle ++ (1,1);
%
\fill [opacity=0.6, cyan, shift={(36,20)}, rotate = -90] (1,7) rectangle ++ (1,1);
\fill [opacity=0.6, cyan, shift={(36,20)}, rotate = -90] (6,7) rectangle ++ (1,1);
\fill [opacity=0.6, cyan, shift={(36,20)}, rotate = -90] (13,7) rectangle ++ (1,1);
\draw [thin, shift={(36,20)}, rotate = -90] (1,7) rectangle ++ (1,1);
\draw [thin, shift={(36,20)}, rotate = -90] (6,7) rectangle ++ (1,1);
\draw [thin, shift={(36,20)}, rotate = -90] (13,7) rectangle ++ (1,1);
%
\fill [opacity=0.6, cyan, shift={(36,20)}, rotate = -90] (4,8) rectangle ++ (1,1);
\fill [opacity=0.6, cyan, shift={(36,20)}, rotate = -90] (8,8) rectangle ++ (1,1);
\fill [opacity=0.6, cyan, shift={(36,20)}, rotate = -90] (15,8) rectangle ++ (1,1);
\draw [thin, shift={(36,20)}, rotate = -90] (4,8) rectangle ++ (1,1);
\draw [thin, shift={(36,20)}, rotate = -90] (8,8) rectangle ++ (1,1);
\draw [thin, shift={(36,20)}, rotate = -90] (15,8) rectangle ++ (1,1);
%
\fill [opacity=0.6, cyan, shift={(36,20)}, rotate = -90] (0,9) rectangle ++ (1,1);
\fill [opacity=0.6, cyan, shift={(36,20)}, rotate = -90] (2,9) rectangle ++ (1,1);
\fill [opacity=0.6, cyan, shift={(36,20)}, rotate = -90] (18,9) rectangle ++ (1,1);
\draw [thin, shift={(36,20)}, rotate = -90] (0,9) rectangle ++ (1,1);
\draw [thin, shift={(36,20)}, rotate = -90] (2,9) rectangle ++ (1,1);
\draw [thin, shift={(36,20)}, rotate = -90] (18,9) rectangle ++ (1,1);
%
%
%
\fill [opacity=1, red, shift={(22,-9)}] (1.2,0) rectangle ++ (1.2,1.2);
\fill [opacity=1, red, shift={(22,-9)}] (2.4,0) rectangle ++ (1.2,1.2);
\fill [opacity=1, red, shift={(22,-9)}] (0,1.2) rectangle ++ (1.2,1.2);
\fill [opacity=1, red, shift={(22,-9)}] (1.2,1.2) rectangle ++ (1.2,1.2);
\fill [opacity=1, red, shift={(22,-9)}] (2.4,1.2) rectangle ++ (1.2,1.2);
\fill [opacity=1, red, shift={(22,-9)}] (0,2.4) rectangle ++ (1.2,1.2);
\fill [opacity=1, red, shift={(22,-9)}] (1.2,2.4) rectangle ++ (1.2,1.2);
\draw [thin, shift={(22,-9)},step=1.2] (0,0) grid ++ (3.6,3.6);
\node [align=center] at (27.6,-6){\large $\times$};
\node [align=center] at (27.6,-7.2){\large $\times$};
\node [align=center] at (27.6,-8.4){\large $\times$};
\draw [thin, shift={(27,-5.4)}, step = 1.2] (0,0) grid ++ (1.2,-3.6);
\fill [opacity=1, red, shift={(31.2,-5.4)}] (0,0) rectangle ++ (1.2,-1.2);
\fill [opacity=1, red, shift={(31.2,-5.4)}] (0,-1.2) rectangle ++ (1.2,-1.2);
\fill [opacity=1, red, shift={(31.2,-5.4)}] (0,-2.4) rectangle ++ (1.2,-1.2);
\draw [thin, shift={(31.2,-5.4)}, step = 1.2] (0,0) grid ++ (1.2,-3.6);
\node [align=center] at (29.6,-7.2){\footnotesize $=$};
\node [align=center] at (21,-7.2){\footnotesize $-$};
\draw [-latex,semithick](10,-1) to [out=270, in=180] (20,-7.2);
\draw [-latex,semithick](27.5,-1) to [out=270, in=90] (27.5,-4);
\draw [-latex,semithick](41,-1) to [out=270, in=0] (33.5,-7.2);
\end{tikzpicture}
\caption{Schematic representation of the action of restriction operators in the computation of $\widetilde{G}^T$.}
\label{fig:EBDFA_sylv}
\end{figure}
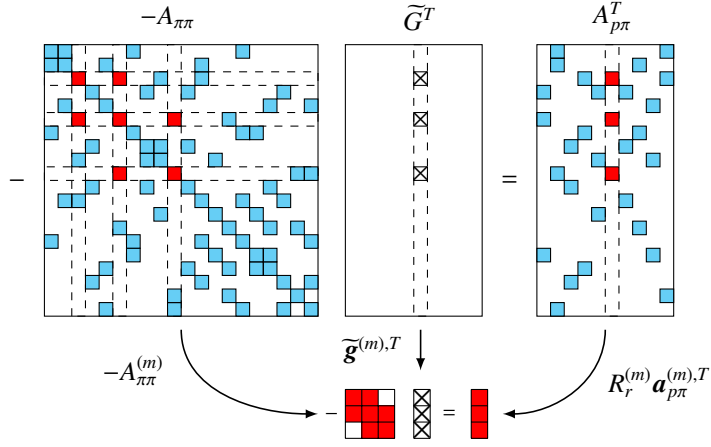

The restricted vector $\widetilde{\bm{g}}^{(m)}$ obtained from the solution of system \eqref{eq:syst_MRHS_res} is an approximation of the $m$-th row of the exact decoupling factor $G$ and inherits an optimal property, as stated by the following result.
\begin{prop}
\label{thm:min_energy}
Let $A \in \mathbb{R}^{n\times n}$ be SPD and $R \in \mathbb{R}^{m\times n}$ ($m<n$) be a restriction operator from $\mathbb{R}^n$ to $\mathbb{R}^m$. Then, for any right-hand side vector $\bm{b} \in \mathbb{R}^n$, the solution $\bm{x}\in\mathbb{R}^m$ to the restricted system:
\begin{linenomath}
\begin{equation}
    R A R^T \bm{x} = R \bm{b}
    \label{eq:SPDrestr_sys}
\end{equation}
\end{linenomath}
is such that the error $\bm{e}=A^{-1}\bm{b}-R^T\bm{x}$ has minimal energy norm with respect to the $A$-inner product.
\end{prop}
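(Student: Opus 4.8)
The plan is to recognize the restricted system \eqref{eq:SPDrestr_sys} as the normal equations of an energy-norm minimization problem over the subspace $\mathrm{range}(R^T)\subseteq\mathbb{R}^n$, and then invoke the classical Galerkin/projection optimality. First I would set $\bm{x}^\star = A^{-1}\bm{b}$ (the exact solution, well defined since $A$ is SPD), so the error is $\bm{e} = \bm{x}^\star - R^T\bm{x}$, and the quantity to be minimized is $\|\bm{e}\|_A^2 = \bm{e}^T A \bm{e}$. I would let $\bm{x}$ range over $\mathbb{R}^m$, so that $R^T\bm{x}$ ranges over the subspace $V := \mathrm{range}(R^T)$, and consider the function $\phi(\bm{x}) = (\bm{x}^\star - R^T\bm{x})^T A (\bm{x}^\star - R^T\bm{x})$. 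Expanding gives $\phi(\bm{x}) = \bm{x}^T R A R^T \bm{x} - 2\bm{x}^T R A \bm{x}^\star + {\bm{x}^\star}^T A \bm{x}^\star$, and using $A\bm{x}^\star = \bm{b}$ this becomes $\phi(\bm{x}) = \bm{x}^T (R A R^T) \bm{x} - 2\bm{x}^T (R\bm{b}) + \text{const}$.

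Next I would minimize $\phi$. Since $A$ is SPD and, as already noted in the text (the restriction of an SPD matrix to a principal submatrix — or more generally $RAR^T$ with $R$ full row rank — is SPD), the matrix $RAR^T$ is SPD, hence $\phi$ is a strictly convex quadratic with a unique minimizer characterized by the stationarity condition $\nabla\phi(\bm{x}) = 2(RAR^T)\bm{x} - 2R\bm{b} = \bm{0}$, i.e. exactly $RAR^T\bm{x} = R\bm{b}$, which is \eqref{eq:SPDrestr_sys}. Therefore the solution of the restricted system is precisely the minimizer of $\|\bm{x}^\star - R^T\bm{x}\|_A$ over $\bm{x}\in\mathbb{R}^m$, equivalently the minimizer of $\|\bm{e}\|_A$ over all vectors of the form $\bm{x}^\star - R^T\bm{x}$; this is the claimed optimality. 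As an equivalent geometric phrasing, I would note that the stationarity condition reads $R^T(A\bm{e}) $-orthogonality, i.e. $(R^T\bm{y})^T A \bm{e} = 0$ for all $\bm{y}\in\mathbb{R}^m$, so $R^T\bm{x}$ is the $A$-orthogonal projection of $\bm{x}^\star$ onto $V = \mathrm{range}(R^T)$, and the Pythagorean identity in the $A$-inner product then gives $\|\bm{x}^\star - R^T\bm{x}\|_A \le \|\bm{x}^\star - \bm{v}\|_A$ for every $\bm{v}\in V$, with equality only at $\bm{v}=R^T\bm{x}$.

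There is no real obstacle here; the only points requiring a word of care are (i) that $R$ must have full row rank so that $RAR^T$ is invertible and the minimizer is unique — this holds because the rows of $R$ are distinct canonical basis vectors, as in \eqref{eq:r_r^m} — and (ii) making precise the statement "$\bm{e}$ has minimal energy norm," which I would interpret (and state) as: among all $\bm{e}$ of the admissible form $A^{-1}\bm{b} - R^T\bm{x}$ with $\bm{x}\in\mathbb{R}^m$, the one produced by \eqref{eq:SPDrestr_sys} has the smallest $\|\cdot\|_A$. I would close by remarking that this is exactly the best-approximation property underlying Galerkin projection methods, which justifies the use of the restricted solves \eqref{eq:syst_MRHS_res} as an approximation of the $m$-th row of $G$ in an $A_{\pi\pi}$-optimal sense.
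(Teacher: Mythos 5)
Your proposal is correct and follows essentially the same route as the paper's proof: expand the quadratic $\Phi(\bm{x}) = (A^{-1}\bm{b}-R^T\bm{x})^T A (A^{-1}\bm{b}-R^T\bm{x})$ and identify the restricted system $RAR^T\bm{x}=R\bm{b}$ as the stationarity condition of this strictly convex functional. Your added remarks on the full row rank of $R$ (ensuring $RAR^T$ is SPD and the minimizer unique) and the Galerkin/$A$-orthogonal projection interpretation are sound refinements of the same argument, not a different proof.
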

\begin{proof}
The energy norm of $\bm{e}$ with respect to $A$ reads:
\begin{equation}
    \left\| \bm{e} \right\|_A = \sqrt{\bm{e}^T A \bm{e}}.
    \label{eq:energy-norm}
\end{equation}
The contribution under square root in \eqref{eq:energy-norm} is a quadratic function $\Phi(\bm{x}):\mathbb{R}^{m}\rightarrow\mathbb{R}^+$:
\begin{eqnarray}
\Phi \left( \bm{x} \right) &=& \left( A^{-1} \bm{b} - R^T \bm{x} \right)^T A \left( A^{-1} \bm{b} - R^T \bm{x} \right) \nonumber \\
&=& \bm{x}^T R A R^T \bm{x} - 2 \bm{x}^T R \bm{b} + \bm{b}^T A^{-1} \bm{b},
\label{eq:Phi}
\end{eqnarray}
which has a unique minimum in $\mathbb{R}^m$ being $A$ SPD. Hence:
\begin{equation}
    \min_{\mathbb{R}^n} \left\| \bm{e} \right\|_A = \sqrt{\Phi \left( \bm{t} \right)}, \quad \mbox{with} \; \bm{t} = \arg\min_{\bm{x}\in\mathbb{R}^m} \Phi \left( \bm{x} \right) \quad \Longleftrightarrow \quad \nabla \Phi \left( \bm{x} \right) = \bm{0}.
    \label{eq:min_energy}
\end{equation}
Condition \eqref{eq:min_energy} applied to equation \eqref{eq:Phi} immediately yields:
\begin{equation}
    R A R^T \bm{x} - R \bm{b} = \bm{0},
    \label{eq:min_energy-2}
\end{equation}
which completes the proof.
\end{proof}
\begin{rem}
\label{rem:2}
Proposition \ref{thm:min_energy} guarantees that the restricted vector $\widetilde{\bm{g}}^{(m)}$ is the best approximation of $\bm{g}^{(m)}$ that can be computed for the components selected by the set $Q^{(m)}$, in the sense of the energy norm with respect to the $A_{\pi \pi}$-inner product. Hence, an accurate selection of such components, so as to identify the most important ones for each column, is fundamental for the quality of the approximation $\widetilde{G}$ and, similarly, of $\widetilde{F}$ and $\widetilde{H}$. 
\end{rem}

Finally, the assemblage of the $N_e$ contributions $\widetilde{\bm{g}}^{(m)}$ from equation~\eqref{eq:syst_MRHS_res}, prolonged back to $\mathbb{R}^{N_f}$, gives rise to the approximate factor $\widetilde{G}$. Recalling that restrictions and prolongations are dual operators, $\widetilde{G}$ is easily obtained as:
\begin{linenomath}
\begin{equation}
    \widetilde{G} = \sum_{m=1}^{N_e} \bm{e}^{(m)} \widetilde{\bm{g}}^{(m)}
    R_r^{(m)}.
    \label{eq:syls_syst_RED}
\end{equation}
\end{linenomath}
Operating similarly for equation~\eqref{eq:EDFA_systems_2}, we obtain:
\begin{linenomath}
\begin{equation}
    \widetilde{F} = \sum_{m=1}^{N_e} R_r^{(m),T} \widetilde{\bm{f}}^{(m)} \bm{e}^{(m),T},
    \label{eq:syls_syst_BLACK}
\end{equation}
\end{linenomath}
where $\widetilde{\bm{f}}^{(m)}$ are the solution of the $N_e$ restricted SPD systems:
\begin{linenomath}
\begin{equation}
    -A_{\pi \pi}^{(m)} \widetilde{\bm{f}}^{(m)} = R_r^{(m)} \bm{a}_{\pi p}^{(m)},
    \label{eq:syst_MRHS_res-F}
\end{equation}
\end{linenomath}
with $\bm{a}_{\pi p}^{(m)}=A_{\pi p} \bm{e}^{(m)}$. Of course, the restriction operators $R_r^{(m)}$ can be the same as for $\widetilde{G}$ or based on a different sequence of subsets $W^{(m)}\subseteq Q$.

As observed in Remark \ref{rem:2}, the sequence of subsets $Q^{(m)}$, $W^{(m)}$, along with their size $s^{(m)}$, affects the density of the approximate decoupling factors and governs the effectiveness of the EDFA preconditioner. In fact, the entries of $Q^{(m)}$ and $W^{(m)}$ are the indices of the non-zero entries computed for the $m$-th row of $\widetilde{G}$ and column of $\widetilde{F}$, respectively. To be effective, the sets $Q^{(m)}$ and $W^{(m)}$ should roughly identify for each row of $\widetilde{G}$ and column of $\widetilde{F}$ the largest entries of $G$ and $F$. 
This key operation is carried out by means of two techniques, referred to as \emph{static} and \emph{dynamic} in the sequel, aimed at selecting the most influential entries expected in $G$ and $F$. 

First of all, for the sake of simplicity we use a single sequence of sets $Q^{(m)}$ for both decoupling factors.
A natural initial guess for $Q^{(m)}$ 
is the set of indices of the non-zero entries belonging to the columns of $A_{p \pi}^T$, which is denser than $A_{\pi p}$.  
Such pattern is referred to as $Q^{(m)}_{A_{p \pi}}$. 
Figure~\ref{fig:EBDFA_sylv} schematically shows how the restricted systems can be retrieved from the global one using the set $Q^{(m)}_{A_{p \pi}}$. The two strategies for computing $Q^{(m)}$ starting from $Q^{(m)}_{A_{p \pi}}$ are as follows.

\begin{enumerate}
\item \emph{Static technique.}
The non-zero entries of $Q^{(m)}_{A_{p \pi}}$ can be derived by the discretization. In particular, the non-zeros lying in the $m$-th row of $A_{p \pi}$ identify the faces of the cells connected with the $m$-th element, as illustrated 
in Figure~\ref{fig:base_patterns}. Notice that the front and right elements have been removed for the sake of readability, being the overall patch symmetric along the three principal directions. The central element (red-filled faces) is the $m$-th cell, which is connected to six adjacent elements, and the colored faces correspond to the indices of the non-zero entries in the $m$-th row of $A_{p\pi}$. Note that, depending on whether the grid is structured or unstructured, the patterns are different. This is a direct consequence of the structure of the elemental matrices $B^{E,-1}$ of equation~\eqref{eq:B_ij}, which derives from the mutual relationships among the basis functions of the $\mathbb{RT}_0$ space for hexahedral elements. For a regular hexahedron, in fact, matrix $B^{E,-1}$ is block-diagonal, while 
this property is no longer valid for a general-shaped hexahedron. 
Since the solution of the system \eqref{eq:syst_MRHS} can be physically interpreted as the face pressures induced by the fluid fluxes related to the pressure gradients occurring in neighboring cells, the pattern $Q_{A_{p \pi}}^{(m)}$ can be extended by adding the connection to faces belonging to close cells where the pressure perturbation is expected to propagate. 
From an algebraic viewpoint, the static technique is based on partitioning the problem domain into overlapping subregions built around each cell and keeping the face connections. 
\item \emph{Dynamic technique.}
The starting pattern $Q_{(0)}^{(m)}=Q_{A_{p \pi}}^{(m)}$ is progressively enlarged during the computation of $\widetilde{\bm{g}}^{(m)}$ and $\widetilde{\bm{f}}^{(m)}$ with the aid of an iterative strategy.
After computing $\widetilde{\bm{g}}_{(0)}^{(m),T}$ from the solution of system~\eqref{eq:syst_MRHS_res}, with the restriction operator $R_{r,(0)}^{(m)}$ built on $Q_{(0)}^{(m)}$, 
the residual of the prolonged system
\begin{linenomath}
\begin{equation}
    \bm{r}_{(0)}^{(m)} = \bm{a}_{p \pi}^{(m),T} + A_{\pi \pi} R_{r,(0)}^{(m),T} \widetilde{\bm{g}}_{(0)}^{(m),T}
    \label{eq:prol_res}
\end{equation}
\end{linenomath}
is obtained and used to expand $Q_{(0)}^{(m)}$ by incorporating the indices of the largest components 
of $\bm{r}_{(0)}^{(m)}$, thus obtaining $Q_{(1)}^{(m)}$. The process can be iterated to obtain $Q_{(2)}^{(m)}$, $Q_{(3)}^{(m)}$, etc., until a certain exit criterion is met. 
The same procedure applies to the system \eqref{eq:syst_MRHS_res-F}.
\end{enumerate}
\begin{figure}[tb]
     \centering
     \subfloat[][Structured grid]{\includegraphics[width=1.5cm]{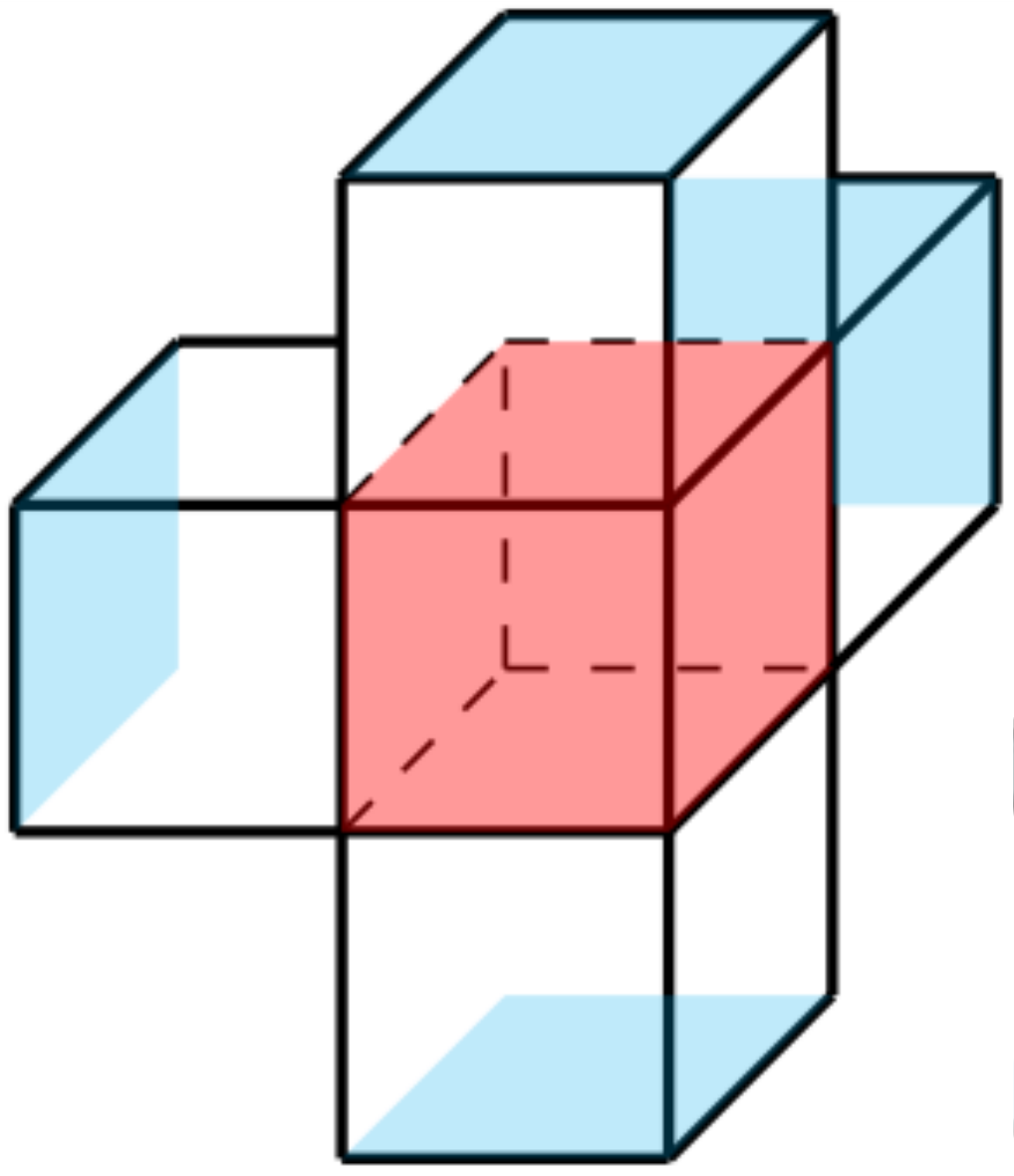} \label{fig:pattern_struct}}
     \hspace{3cm}
     \subfloat[][Unstructured grid]{\includegraphics[width=1.5cm]{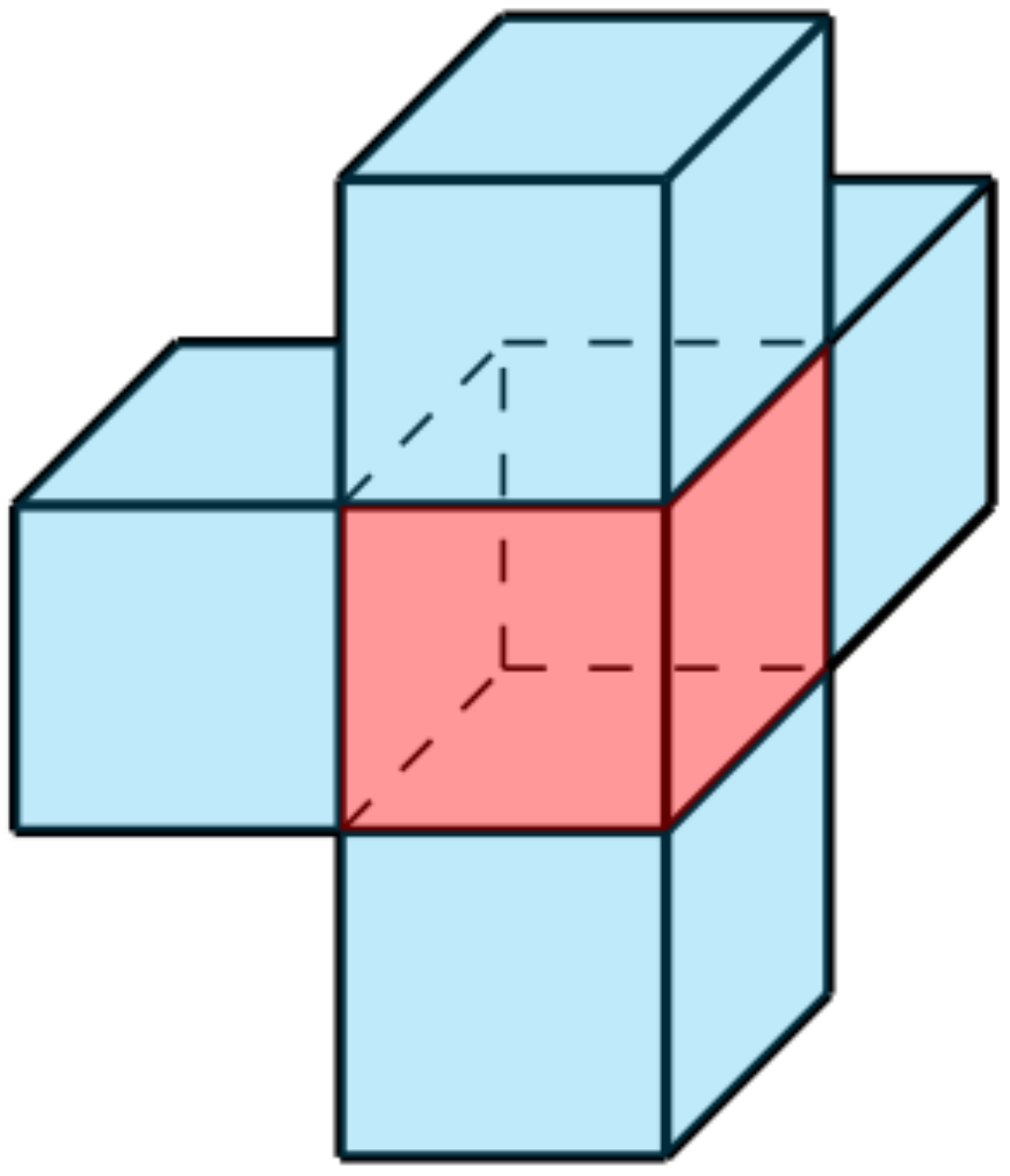} \label{fig:pattern_unstruct}}
     \caption{Sketch of the base patterns for structured and unstructured grids. The front and right elements have been removed to improve the readability of the subpanels.}
     \label{fig:base_patterns}
\end{figure}

\begin{rem}
\label{rem:3}
Equation \eqref{eq:prol_res} is not expensive to compute. In fact, the matrix $A_{\pi \pi} R_{r,0}^{(m),T}$ is the restriction of $A_{\pi \pi}$ to the columns with indices in $Q^{(m)}_{(0)}$. However, such columns are sparse and contain only the connection of a face with the faces of the two sharing cells. Hence, the only non-zero entries of $\bm{r}^{(m)}_{(0)}$ correspond to the indices of the faces of a set of neighboring elements. In practice, the dynamic strategy automatically selects the most significant entries among a subset of potential indices that should resemble the one associated with the static strategy. 
\end{rem}

\begin{rem}
\label{rem:4}
The use of the prolonged residual to select the most significant entries to be retained is strictly related to the symmetry and positive definiteness of $-A_{\pi \pi}$. In fact, $\bm{r}_{(0)}^{(m)}$ is the direction of the gradient of the quadratic form associated to $-A_{\pi \pi}$, whose absolute minimum is the exact solution to \eqref{eq:syst_MRHS}. Therefore, the dynamic strategy can be also regarded as an incomplete steepest descent process, where only the largest contributions to the gradient direction are taken into account.
\end{rem}

\begin{rem}
\label{rem:5}
The EDFA preconditioner, in both the static and dynamic variants, exhibits the remarkable feature that its computation is embarrassingly parallel. 
In fact, the row- and column-wise approach, used to tackle the restricted solution to the MRHS systems~\eqref{eq:EDFA_systems_1} and \eqref{eq:EDFA_systems_2}, allows to solve each single linear system independently of the others. All the available processing units can be assigned batches of systems that are approximately solved at the same time, with a full and effective exploitation of the most modern computational architectures.
\end{rem}

\subsection{Implementation details}
The static and dynamic variants of the EDFA preconditioner 
require a set of user-specified elements to be properly set up. 

The static technique needs the sets $Q^{(m)}\subseteq\{1,2,\ldots,N_f\}$ for $m=1,\ldots,N_e$, which correspond to the indices of faces connected to a certain cell. The level of such a connection, i.e., the neighbours, or the neighbours of the neighbours, and so on, is defined by means of a domain partition into overlapping subregions built around each cell. These subregions are defined on the basis of physical considerations related to the expected directions of fluxes.

The dynamic variant can be regarded as fully algebraic and requires a set of user-specified parameters controlling the enlargement of the initial set $Q^{(m)}_{(0)}$ defined for $m=1,\ldots,N_e$. 
Assuming $Q^{(m)}_{(0)}=Q^{(m)}_{A_{p \pi}}$, the selected
user-defined parameters are:
\begin{itemize}
    \item $n_{\text{add}}$: maximum number of entries added to $Q^{(m)}_{(k-1)}$ at the $k$-th step of the dynamic procedure;
    \item $n_{\text{ent}}$: total maximum number of new entries added to $Q^{(m)}_{(0)}$.
\end{itemize}
%
The iterative process continues until 
$n_{\text{ent}}$ has been reached.  
Alternatively,
it is also possible to set a maximum number of iterations, $it_{\max}$, instead of $n_{\text{ent}}$. 

The computation of $\widetilde{F}$, $\widetilde{G}$ and $\widetilde{S}=A_{p p}-\widetilde{H}$, with either the static or dynamic technique, is followed by a check of the non-zero entries size. 
\emph{Pre-} and \emph{post-filtration} techniques are implemented with the purpose of further sparsifying the approximate Schur complement by discarding those entries whose absolute value is smaller than a user-defined tolerance, namely $\tau_\text{filt}$, relative to the Euclidean norm of the corresponding row.
Performing pre- and/or post-filtration produces an additional cost in the preconditioner set-up, which might be anyway beneficial at the application stage. With the aim at preventing possible breakdowns in the inexact application of $\widetilde{S}^{-1}$, all the diagonal entries are preserved irrespective of the dropping threshold.

Recalling that $A_{p p}$ is the only block of $\mathcal{A}$ changing during a transient simulation, the preconditioner set-up can be split into two stages. The first one, which can be carried out only once at the beginning of the simulation and then recycled at every system solution, consists of the computation of $\widetilde{H}$, i.e., the most time demanding operation, a pre-filtration of $\widetilde{G}$ and $\widetilde{F}$, if needed, and the inner preconditioner for the inexact application of $\widetilde{A}_{\pi \pi}^{-1}$.
The second one, performed at the beginning of each time step, includes the update of $\widetilde{S} = A_{p p} - \widetilde{H}$, the post-filtration, if required, and the computation of the inner preconditioner for the inexact application of $\widetilde{S}^{-1}$. 
In summary, Algorithms~\ref{alg:EDFA_first_stage} and \ref{alg:EDFA_second_stage} provide an overview of the sequence of operations needed to compute the first and second stage of the EDFA preconditioner in both its variants.
\begin{algorithm}
\caption{ {\sc EDFA Computation: Stage 1} 
[$\widetilde{H},\widetilde{A}_{\pi \pi}^{-1}$] = EDFA\_first\_stage($N_e, Q^{(m)}, n_{\textup{ent}}, n_{\textup{add}}, it_{\max}, \tau_{\textup{filt}}, A_{\pi \pi}, A_{\pi p}, A_{p \pi}$).}
\label{alg:EDFA_first_stage}
\begin{algorithmic}[1] \small
\If{EDFA\_stat}
    \For {$m \gets 1,N_e$}
        \State $\bm{a}_{p \pi}^{(m),T} = A_{p \pi}^T \bm{e}^{(m)}$, \quad $\bm{a}_{\pi p}^{(m)} = A_{\pi p} \bm{e}^{(m)}$
        \State Build $R_r^{(m)}$ based on $Q^{(m)}$ (Equation~\eqref{eq:r_r^m})
        \State $A_{\pi \pi}^{(m)} = R_r^{(m)} A_{\pi \pi} R_r^{(m),T}$
        \State Solve $-A_{\pi \pi}^{(m)} \widetilde{\bm{g}}^{(m),T} = R_r^{(m)} \bm{a}_{p \pi}^{(m),T}$ 
        \State Solve $-A_{\pi \pi}^{(m)} \widetilde{\bm{f}}^{(m)} = R_r^{(m)} \bm{a}_{ \pi p}^{(m)}$
        \State Perform \textbf{Pre-filtration} on $\widetilde{\bm{g}}^{(m),T}$ and $\widetilde{\bm{f}}^{(m)}$ with tolerance $\tau_{\textup{filt}}$, if required
        \State $\widetilde{G} \gets \widetilde{G} + \bm{e}^{(m)} \widetilde{\bm{g}}^{(m)} R_r^{(m)}$, \quad $\widetilde{F} \gets \widetilde{F} + R_r^{(m),T} \widetilde{\bm{f}}^{(m)} \bm{e}^{(m),T}$
    \EndFor
\ElsIf{EDFA\_Dynamic}
\For {$m \gets 1,N_e$}
    \State $\bm{a}_{p \pi}^{(m),T} = A_{p \pi}^T \bm{e}^{(m)}$, \quad $\bm{a}_{\pi p}^{(m)} = A_{\pi p} \bm{e}^{(m)}$
    \State Build $R_{r,(0)}^{(m)}$ based on $Q_{(0)}^{(m)}=Q_{A_{p \pi}}^{(m)}$ (Equation~\eqref{eq:r_r^m})
    \State $A_{\pi \pi,(0)}^{(m)} = R_{r,(0)}^{(m)} A_{\pi \pi} R_{r,(0)}^{(m),T}$
    \State Solve $-A_{\pi \pi,(0)}^{(m)} \widetilde{\bm{g}}_{(0)}^{(m),T} = R_{r,(0)}^{(m)} \bm{a}_{p \pi}^{(m),T}$
    \State Compute $\bm{r}_{(0)}^{(m)} = \bm{a}_{p \pi}^{(m),T} + A_{\pi \pi} R_{r,(0)}^{(m),T} \widetilde{\bm{g}}_{(0)}^{(m),T}$
    \State $n_{\textup{prog}} = 0, \quad k=0$ \Comment{Counter of the total number of new entries added to $Q_{(0)}^{(m)}$ and total number of swipes}
    \While {$n_{\textup{prog}} < n_{\textup{ent}}$ and $k < it_{\max}$}
        \State $k \gets k + 1$
        \State $n = \min(n_{\textup{add}}, n_{\textup{ent}} - n_{\textup{prog}})$
        \State Obtain $Q_{(k)}^{(m)}$ by adding to $Q_{(k-1)}^{(m)}$ at most $n$ new indices associated with the largest components of $|\bm{r}_{(k-1)}^{(m)}|$
        \State Update $n_{\textup{prog}}$
        \State Build $R_{r,(k)}^{(m)}$ based on $Q_{(k)}^{(m)}$ (Equation~\eqref{eq:r_r^m})
        \State $A_{\pi \pi, (k)}^{(m)} = R_{r,(k)}^{(m)} A_{\pi \pi} R_{r,(k)}^{(m),T}$
        \State Solve $-A_{\pi \pi,(k)}^{(m)} \widetilde{\bm{g}}_{(k)}^{(m),T} = R_{r,(k)}^{(m)} \bm{a}_{p \pi}^{(m),T}$
        \State Compute $\bm{r}_{(k)}^{(m)} = \bm{a}_{p \pi}^{(m),T} + A_{\pi \pi} R_{r,(k)}^{(m),T} \widetilde{\bm{g}}_{(k)}^{(m),T}$
    \EndWhile
    \State Solve $-A_{\pi \pi, (k)}^{(m)} \widetilde{\bm{f}}^{(m)} = R_{r,(k)}^{(m)} \bm{a}_{ \pi p}^{(m)}$
    \State Perform \textbf{Pre-filtration} on $\widetilde{\bm{g}}_{(k)}^{(m),T}$ and $\widetilde{\bm{f}}^{(m)}$ with tolerance $\tau_{\textup{filt}}$, if required
    \State $\widetilde{G} \gets \widetilde{G} + \bm{e}^{(m)} \widetilde{\bm{g}}^{(m)} R_r^{(m)}$, \quad $\widetilde{F} \gets \widetilde{F} + R_r^{(m),T} \widetilde{\bm{f}}^{(m)} \bm{e}^{(m),T}$
\EndFor
\EndIf
\State Compute $\widetilde{H}=\widetilde{G} A_{\pi \pi} \widetilde{F}$
\State Perform \textbf{Post-filtration} on $\widetilde{H}$ with tolerance $\tau_{\textup{filt}}$, if required
\State Compute the inner preconditioner for the inexact application of $\widetilde{A}_{\pi \pi}^{-1}$
\end{algorithmic}
\end{algorithm}

\begin{algorithm}
\caption{ {\sc EDFA Computation: Stage 2} 
[$\widetilde{S}^{-1}$]=EDFA\_second\_stage($A_{p p}, \widetilde{H}, \tau_{\textup{filt}}$).}
\label{alg:EDFA_second_stage}
\begin{algorithmic}[1] \small
\State $\widetilde{S} = A_{p p} - \widetilde{H}$
\State Perform \textbf{Post-filtration} on $\widetilde{S}$ with tolerance $\tau_{\textup{filt}}$, if required
\State Compute the inner preconditioner for the inexact application of $\widetilde{S}^{-1}$
\end{algorithmic}
\end{algorithm}

\section{Numerical results}
\label{sec:num_results}
The computational performance of the EDFA preconditioner is investigated in both synthetic and real-world reservoir applications.
Four test cases are considered, with grid consisting of four layers taken from the SPE10 model~\cite{Christie2001} and comprising 51,741 elements and 171,070 faces, for a total of 222,811 unknowns. The scenario being tested, depicted in Figure~\ref{fig:SPE10_domains}, represents a reservoir with a producer located in the centre and four injectors, one at each corner. The wells intercept the full thickness of the reservoir. The initial water pressure in the reservoir is uniform and equal to 140 bar, with the producer and injectors pumping at a constant pressure of 100 and 200 bar, respectively. Different variants of the model domain have been considered. In Test 1 and 3, the grid is cartesian with a regular hexahedral discretization (Figure~\ref{fig:SPE10_struct}), whereas in Test 2 and 4 the planar structure has been deformed into a dome (Figure~\ref{fig:SPE10_unstruct}). The resulting grids are, therefore, structured and unstructured, respectively. In all tests, porosity spans the interval [2.6E-5,0.5] with a spatial distribution that follows the SPE10 benchmark properties, rock $(\alpha)$ and water $(\beta)$ compressibilities are 4.67E-5 $\frac{1}{\text{bar}}$ and 4.84E-5 $\frac{1}{\text{bar}}$, respectively, and the water specific weight $(\gamma)$ is 0.101 $\frac{\text{bar}}{\text{m}}$.

\begin{figure}
     \centering
     \subfloat[][]{\includegraphics[width=7.6cm]{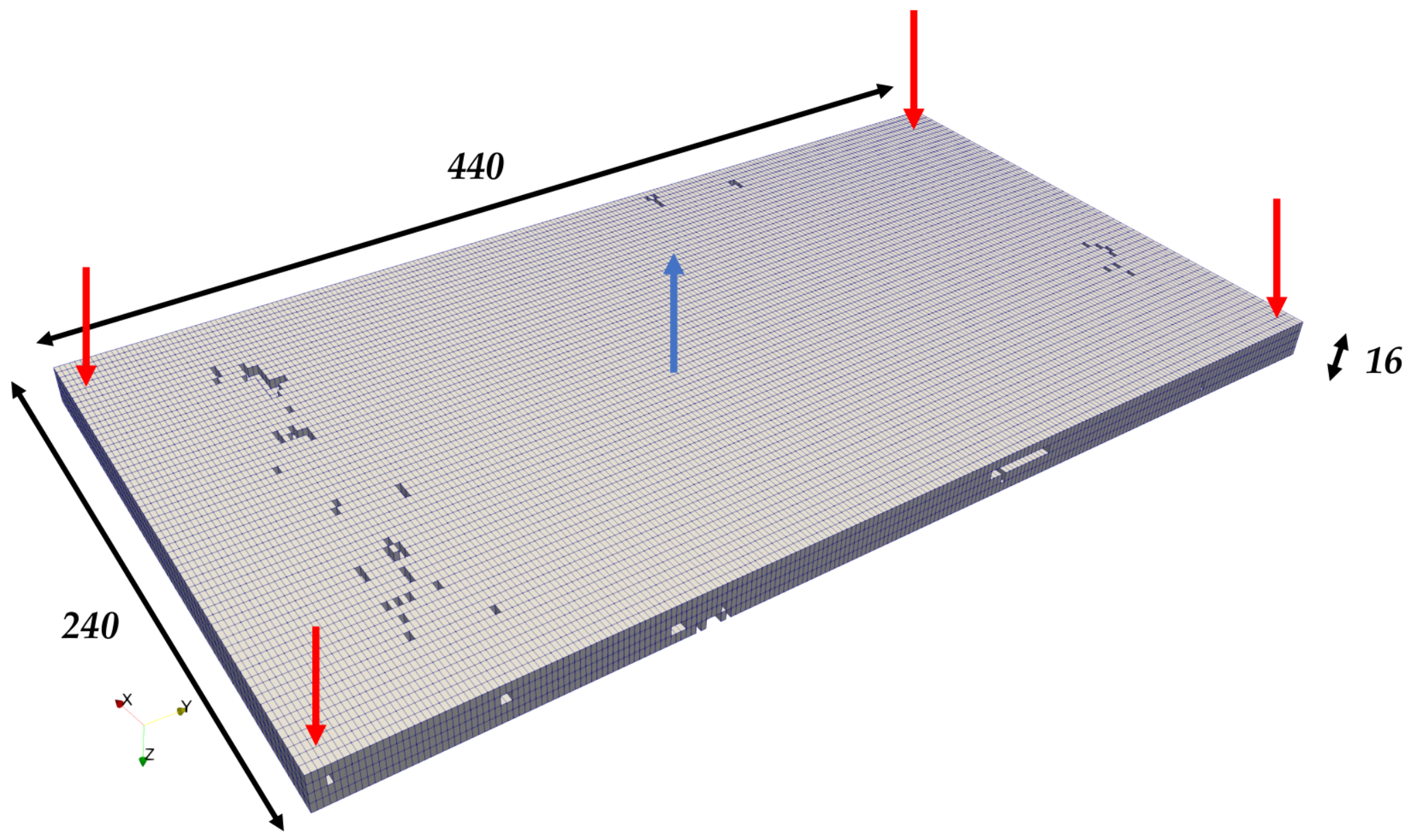} \label{fig:SPE10_struct}}
     \hspace{0.4cm}
     \subfloat[][]{\includegraphics[width=7.6cm]{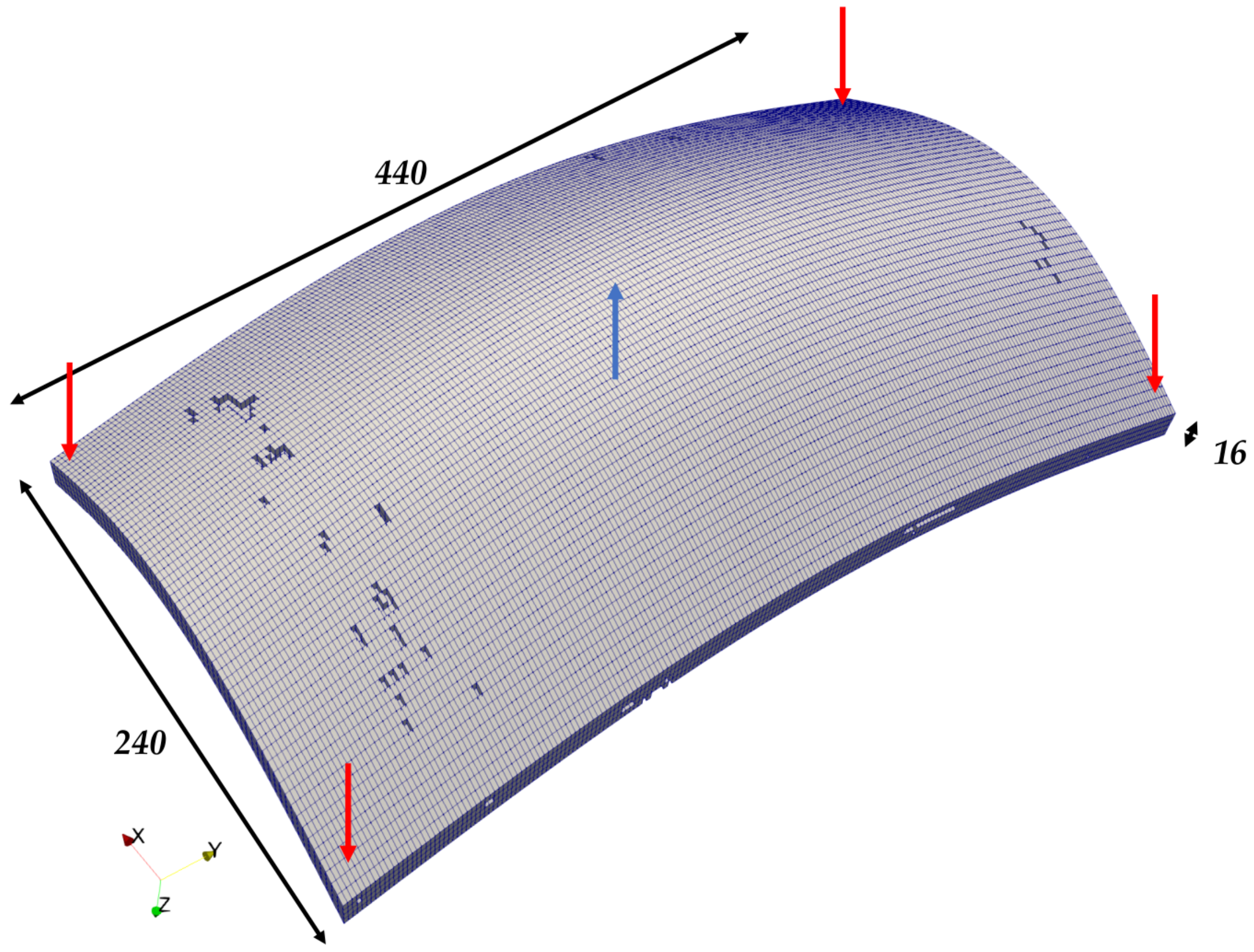} \label{fig:SPE10_unstruct}}
     \caption{Planar (a) and dome-structured (b) reservoirs, used as domains for Tests 1, 3 and 2, 4, respectively. The blue and red arrows indicate the position of the producer and injectors, respectively. The domain size is in meters.}
    \label{fig:SPE10_domains}
\end{figure}

A summary of the test cases and their main properties is reported in Table~\ref{tab:test_setup}.
Test 1, which is characterized by a homogeneous isotropic hydraulic conductivity in the form of a diagonal tensor 
is aimed at introducing the operative principles of the proposed preconditioner variants. 
A sensitivity analysis is carried out on the patterns selected for the static variant and
on the two governing user-specified parameters for the dynamic technique. 
Then, the EDFA preconditioner is employed in a transient simulation to evaluate the effect of time, and in particular the size of the time step, $\Delta t$, on its performance. 
Test 2 preserves the same hydraulic properties as Test 1, but highlights the influence of an unstructured mesh 
in the optimal setting of the preconditioner.
Finally, Tests 3 and 4 investigate the efficiency and robustness of the EDFA preconditioner in challenging real-world conditions. Specifically, Test 3 exhibits a highly heterogeneous and anisotropic conductivity distribution, as derived from the properties of the SPE10 model and expressed in the form of a diagonal tensor. 
On the contrary, the dome reservoir in Test 4 is characterized by a heterogeneous and isotropic conductivity field with a full tensor, obtained by extending the horizontal conductivity values ($K_{x,y}$) to the vertical direction ($K_z$) and rotating the principal axes of the resulting tensor so as to follow the curvature of the dome reservoir.
The sensitivity analysis on the EDFA preconditioner performance is carried out for the system at steady state, then the
overall performance is investigated in full-transient simulations. 
%
%
\begin{table}[tb] \footnotesize
\caption{Setup of the test cases and number of non-zeros of the resulting matrices, where $N_f=171,070$ and $N_e=51,741$. The values of the hydraulic conductivity in brackets are the minimum and maximum of the portion of the SPE10 model used herein. The distribution of the conductivity values throughout the domain follows that of the SPE10 benchmark.}
\label{tab:test_setup}
\centering
\vspace{0.2cm}
\begin{tabular}{lccccc}
\toprule
Test & & 1 & 2 & 3 & 4 \\
\midrule
Reservoir type & & Plain & Dome & Plain & Dome \\
Grid type & & Structured & Unstructured & Structured & Unstructured \\
\multirow{2}*{Cond. tensor properties} & & Homogeneous & Homogeneous & Heterogeneous & Heterogeneous \\
  &  & Isotropic & Isotropic & Anisotropic & Isotropic \\
Cond. tensor type   & & Diagonal & Diagonal & Diagonal & Full \\
Horiz. conductivity & $\left[ \frac{\text{m}}{\text{d}} \right]$ & 1.73E-5 & 1.73E-5 & [3.0E-7, 2.0E0] & [3.0E-7, 2.0E0] \\
Vert. conductivity & $\left[ \frac{\text{m}}{\text{d}} \right]$ & 1.73E-5 & 1.73E-5 & [3.9E-11, 6.0E-1] & [3.0E-7, 2.0E0]\\
\midrule
nnz($\mathcal{A}$) & & 1,711,914 & 4,069,590 & 1,711,914 & 4,069,590 \\
nnz($A_{\pi \pi}$) & & 481,636   & 1,723,900 & 481,636   & 1,723,900 \\
nnz($A_{\pi p}$)   & & 310,446   & 310,446 & 310,446   & 310,446   \\
nnz($A_{p \pi}$)   & & 589,299   & 1,704,711 & 589,299   & 1,704,711\\
nnz($A_{p p}$)     & & 330,533   & 330,533 & 330,533   & 330,533   \\
\bottomrule
\end{tabular}
\end{table}
%

%

Bi-CGStab~\cite{VanderVorst1992}, with the null vector used as initial guess, is elected as Krylov subspace method to solve the sequence of non-symmetric linear systems~\eqref{eq:system}. The exit criterion for the iteration count relies on the reduction of the 2-norm of the relative residual below a prescribed threshold $\tau$, i.e., $|| \bm{r}_k ||_2 / || \bm{r}_0 ||_2 \leq \tau$, where $k$ is the iteration number and $\tau = 10^{-8}$. 
The computational performance of the preconditioned Bi-CGStab solver is monitored by using the following indicators: (i) the iteration count, $n_{\text{it}}$, (ii) the preconditioner density, $\mu$, defined as
\begin{linenomath}
\begin{equation}
    \mu = \frac{\mathtt{nnz}(\widetilde{A}_{\pi \pi}^{-1}) + \mathtt{nnz}(A_{\pi p}) + \mathtt{nnz}(A_{p \pi}) + \mathtt{nnz}(\widetilde{S}^{-1})}{\mathtt{nnz}(A_{\pi \pi}) + \mathtt{nnz}(A_{\pi p}) + \mathtt{nnz}(A_{p \pi}) + \mathtt{nnz}(A_{p p})},
\end{equation}
\end{linenomath}
where the function $\mathtt{nnz}()$ provides the number of non-zeros stored for a sparse matrix,
and (iii) the CPU time split into 
$t_{p_0}$, $t_p$ and $t_s$, needed to perform the first and second stage of the EDFA preconditioner setup (Algorithm \ref{alg:EDFA_first_stage} and \ref{alg:EDFA_second_stage}, respectively)
and to iterate to convergence. 
We denote by $t_t = t_p + t_s$ the total time associated with the solution of the linear system in a single time step.

For the transient simulations, we consider also the Courant-Friedrichs-Lewy (CFL) number, which is defined as~\cite{Cao2002,Coats2001}:
\begin{linenomath}
\begin{equation}
    \chi^E = \frac{Q^E \Delta t}{\Omega^E \phi^E}
    \label{eq:CFL_number}
\end{equation}
\end{linenomath}
where $Q^E$ is the water flux through the $E$-th element during a time step of size $\Delta t$. Specifically, two measures are reported depending on the type of analysis:
\begin{linenomath}
\begin{equation}
    \chi_{\infty}= \max_E \left( \chi^E \right) \qquad \text{and} \qquad \overline{\chi_{\infty}} = \frac{\sum_{i=1}^{n_{\textup{step}}}\chi_{\infty}^i}{n_{\textup{step}}}
\end{equation}
\end{linenomath}
where $n_{\textup{step}}$ is the number of temporal steps in the simulation.
The size of the time steps is dynamically adjusted during the transient simulations in order to stabilize the pressure change between two consecutive time steps. The underlying criterion relies on the maximum pressure difference at the two previous steps, $\Delta p_{\max}=\max_E(p_n^E-p_{n-1}^E)$, and a user defined goal for the pressure change, $\Delta p_T$, to define the optimal size of the next time step:
\begin{linenomath}
\begin{equation}
    \Delta t_{n+1} = \min \left\{ \Delta t_n \min \left\{ \Delta t_{\textup{mult}}, \frac{\Delta p_T}{\Delta p_{\max}} \right\}, \Delta t_{\max} \right\}
    \label{eq:delta_t_dyn}
\end{equation}
\end{linenomath}
where $\Delta t_{\textup{mult}}$ is a predefined multiplicative factor and $\Delta t_{\max}$ the maximum time step length. A relaxation factor can be introduced also in equation~\eqref{eq:delta_t_dyn}~\cite{Abushaikha2017}. 

The number of non-zeros of matrix $\mathcal{A}$ and its submatrices for the four test cases is reported in Table~\ref{tab:test_setup}. Notice that it depends only on the grid type. 
Both the solver and the preconditioner are implemented in Matlab. For the inexact application of both $\widetilde{A}_{\pi \pi}^{-1}$ and $\widetilde{S}^{-1}$ we use the incomplete factorizations with partial fill-in degree already available in Matlab. Of course, other powerful strategies, also more prone to a fully parallel implementation, can be used and will be considered in future developments of the algorithm. 
For the computation of $\widetilde{F}$ and $\widetilde{G}$, the $\mathtt{parfor}$ operator has been exploited.
All numerical tests were carried out on an Intel\textregistered Core\texttrademark i7 Quad-Core at 2.9 GHz with 16 GB of RAM.
%
%

\subsection{Test 1: Planar reservoir with homogeneous and isotropic hydraulic conductivity}
\label{sec:test_1}
\begin{figure}[tb]
     \centering
     \subfloat[][Pattern A]{\includegraphics[width=2.2cm]{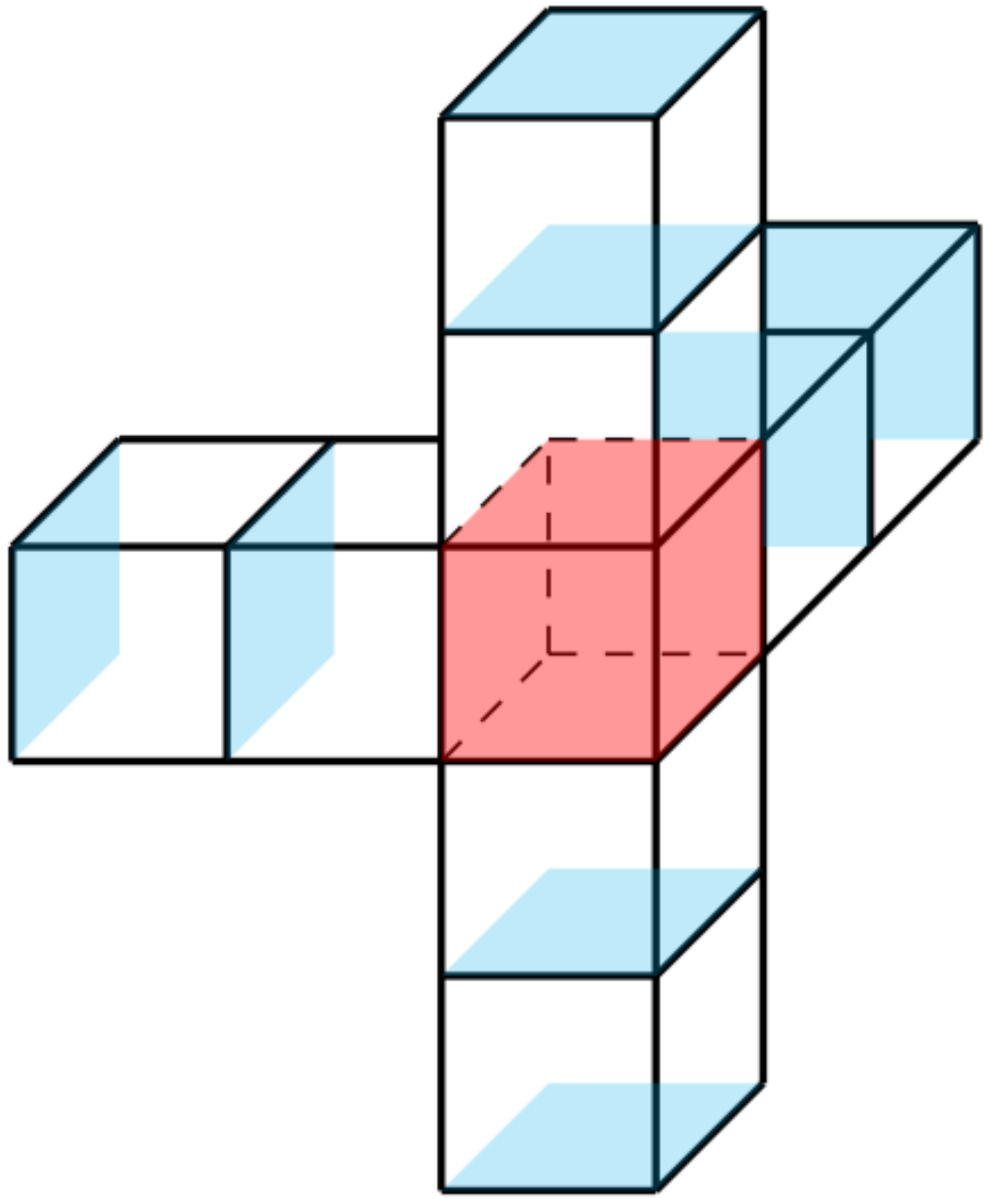} \label{fig:pattern_A}}
     \hspace{1cm}
     \subfloat[][Pattern B]{\includegraphics[width=3cm]{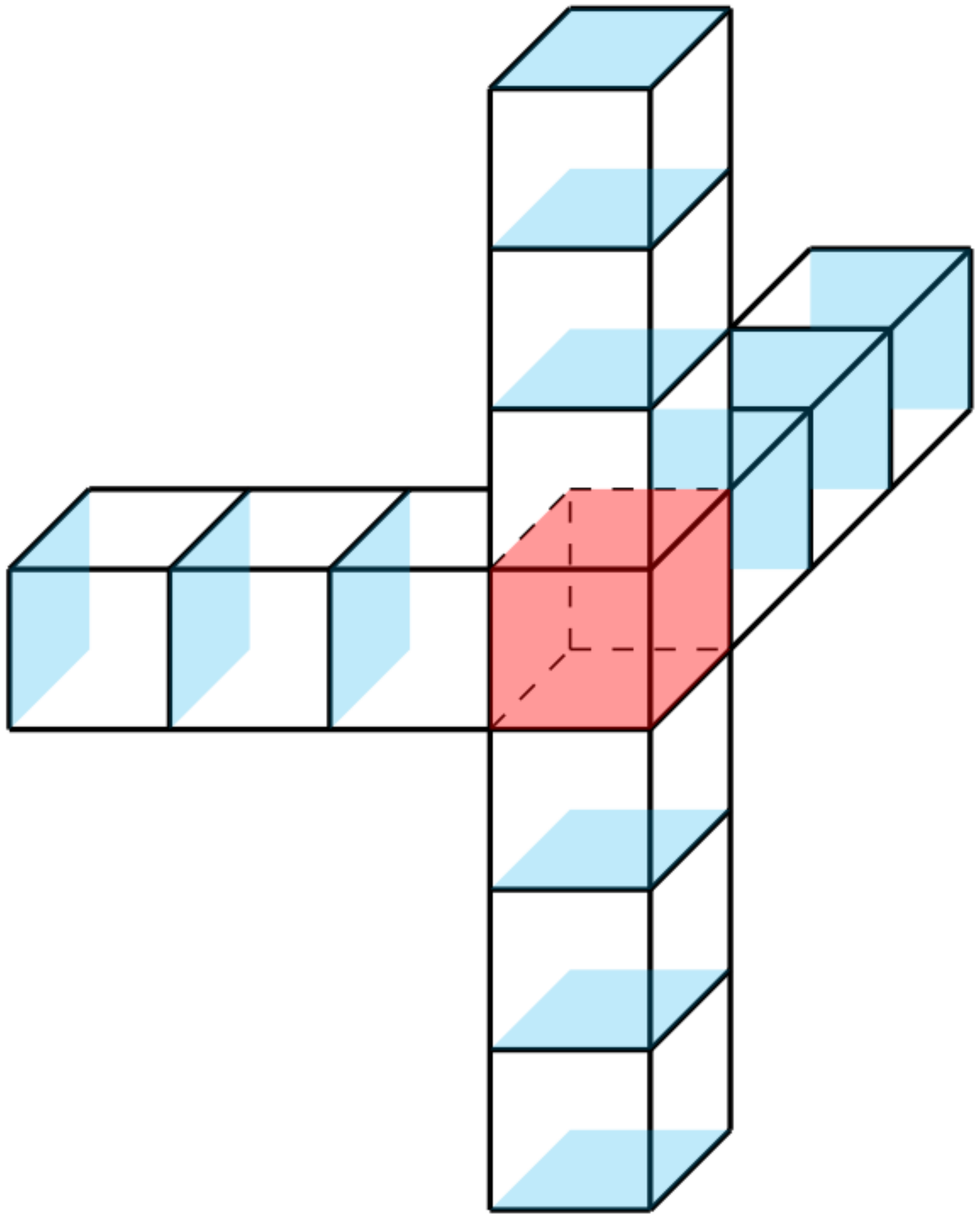} \label{fig:pattern_B}}
     \hspace{1cm}
     \subfloat[][Pattern C]{\includegraphics[width=1.5cm]{Figures/Pattern_unstruct.pdf} \label{fig:pattern_C}}
     \hspace{1cm}
     \subfloat[][Pattern D]{\includegraphics[width=3.7cm]{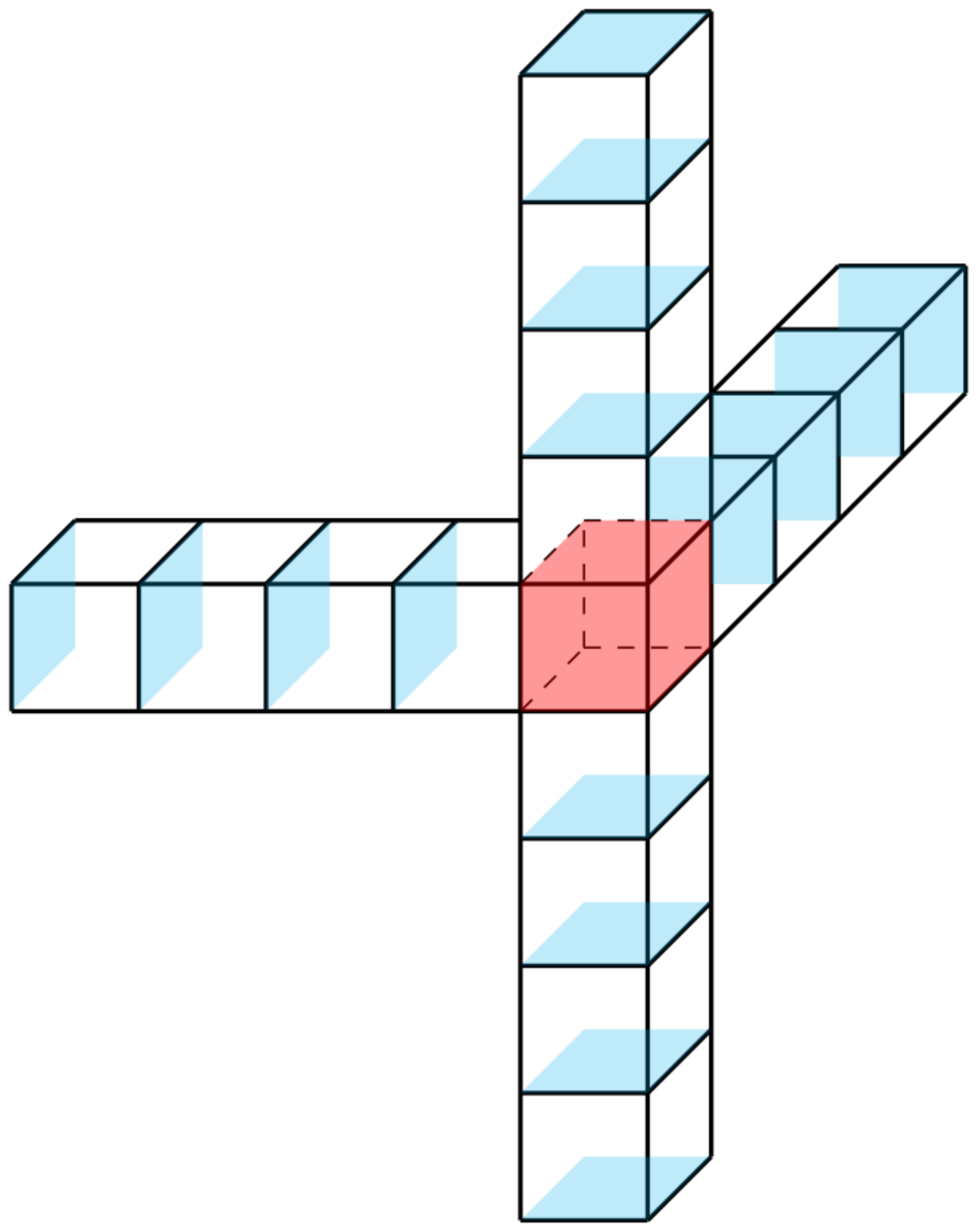} \label{fig:pattern_D}}
     \hspace{1cm}
     \subfloat[][Pattern E]{\includegraphics[width=2.2cm]{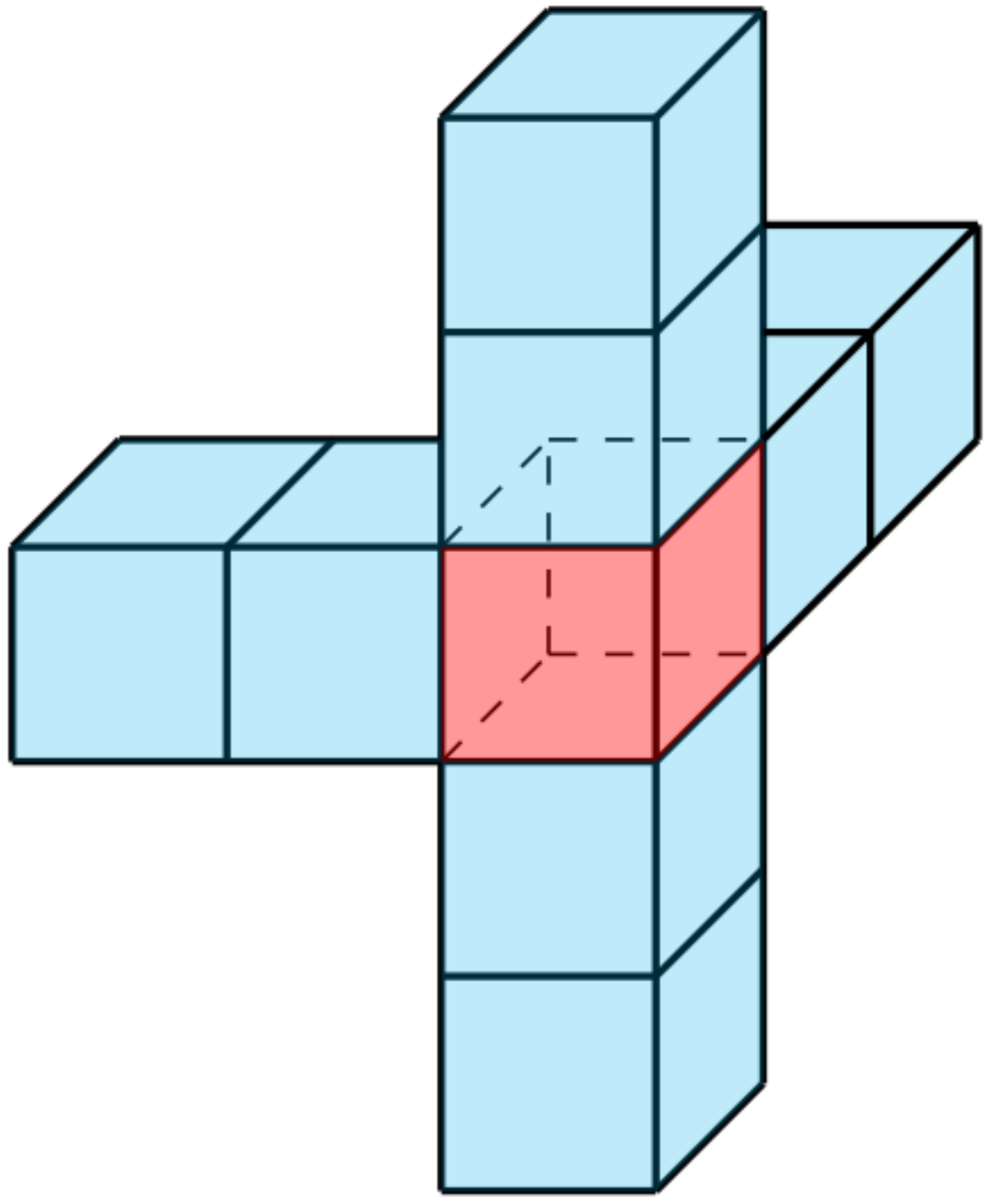} \label{fig:pattern_E}}
     \caption{Patterns for the static technique. The front and right elements have been removed to improve the readability.}
    \label{fig:static_patterns}
\end{figure}
%
First, we introduce a set of patches of cells associated with the face pattern connection used in the static EDFA variant.
The native pattern of the $m$-th column, previously introduced in Figure~\ref{fig:pattern_struct} for a structured grid, can be statically enlarged by considering the patches A, B and D (Figures~\ref{fig:pattern_A}, \ref{fig:pattern_B} and \ref{fig:pattern_D}), assuming that the flux is mainly oriented along the principal conductivity axes. 
By distinction, the connections of patterns C and E (Figures~\ref{fig:pattern_C} and \ref{fig:pattern_E}) assume the presence of significant fluxes through all directions, as it might be for instance expected in the case of a full conductivity tensor. Similarly, a significant permeability anisotropy could suggest privileging one direction with respect to the orthogonal ones.

The main results from the application of the EDFA preconditioner in its static variant to Test 1 (homogeneous and isotropic conductivity with a diagonal tensor) are reported in Table~\ref{tab:test_1_static}.
\begin{table}[tbp] \scriptsize
\caption{Test 1: Numerical performance of the static technique.}
\label{tab:test_1_static}
\centering
\vspace{0.2cm}
\begin{tabular}{ccccccccccccc}
\toprule
\# & Pat & Filt & $\tau_{\text{filt}}$  & $n_{\text{it}}$ & $t_{p_0}$ & $t_p$ & $t_s$ & $t_t$ & $\mu$\\
   &     &      &                       &                   &   [s]    &  [s]  &  [s]  & [s]   &     \\
\midrule
0  & Base & *    &    *    &  356 &  2.50 & 0.03 & 10.94 & 10.97 &  1.518 \\
1  &  A   & *    &    *    &  224 &  3.30 & 0.03 & 7.28  & 7.31  &  1.739 \\
2  &  B   & *    &    *    &  187 &  3.51 & 0.05 & 6.42  & 6.47  &  1.953 \\
3  &  C   & *    &    *    &  306 &  3.98 & 0.02 & 9.40  & 9.42  &  1.518 \\
4  &  D   & *    &    *    &  222 &  4.36 & 0.06 & 8.14  & 8.20  &  2.160 \\
5  &  E   & *    &    *    &  189 &  4.82 & 0.03 & 6.18  & 6.21  &  1.739 \\
6  &  E   & Post & 1.E-3   &  224 &  4.18 & 3.74 & 6.75 & 10.49 &  1.520 \\
7  &  E            & Pre  & 1.E-2   &  225 & 10.36 & 0.04 & 7.49 & 7.53  &  1.739 \\
\bottomrule
\end{tabular}
\end{table}
Run 0, denoted as \emph{base}, is taken as benchmark for the following considerations, since it refers to the performance of the EDFA preconditioner with the original $A_{p \pi}^T$ non-zero pattern. Expanding such initial pattern, by using the predefined connections A, B, C, D and E (Figures~\ref{fig:pattern_A} - \ref{fig:pattern_E}) is indeed beneficial, as observed in runs 1 to 5. Considering the reduction in the total CPU time $t_t$ per time step as evaluation criterion, the best results are achieved by patterns E, B and A. Specifically, the use of pattern E allows to reduce $n_{\text{it}}$ by a factor 1.88 and $t_t$ by 1.77, while increasing the preconditioner density by only 1.15. Runs 6 and 7 show the results obtained by applying pre- and post-filtration to pattern E. Pre- and post-filtration introduce a further sparsification of the approximate Schur complement, which is expected to decrease the application cost of the EDFA preconditioner at the cost of a slight increase in the iteration count. In this case, such a strategy does not appear to pay off, with the performance substantially getting back to Pattern A at a larger set-up cost. 

%
\begin{figure}
     \centering
     \subfloat[][]{\includegraphics[width=5.7cm]{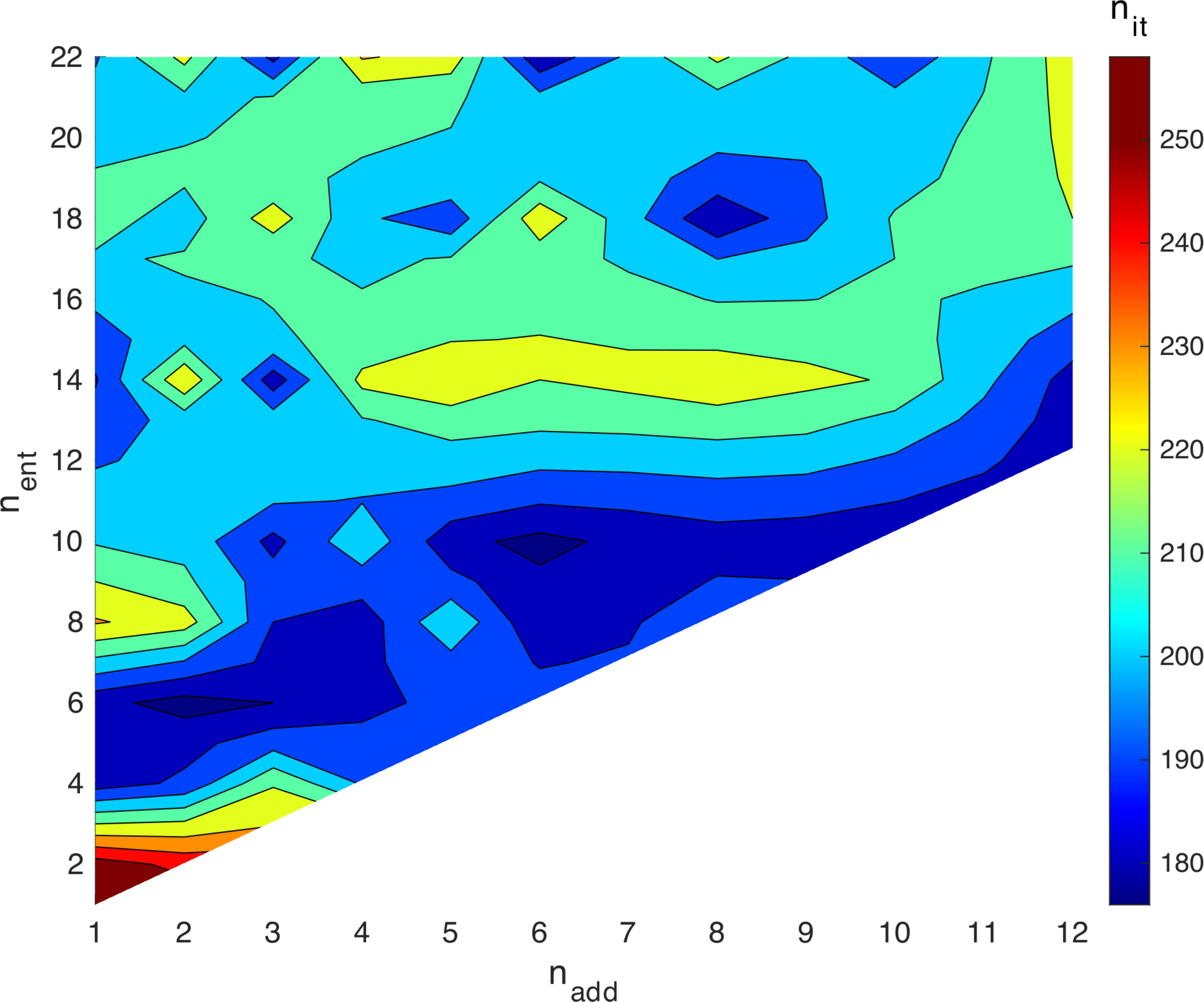} \label{fig:test_1_iter}}
     \hspace{0.2cm}
     \subfloat[][]{\includegraphics[width=5.7cm]{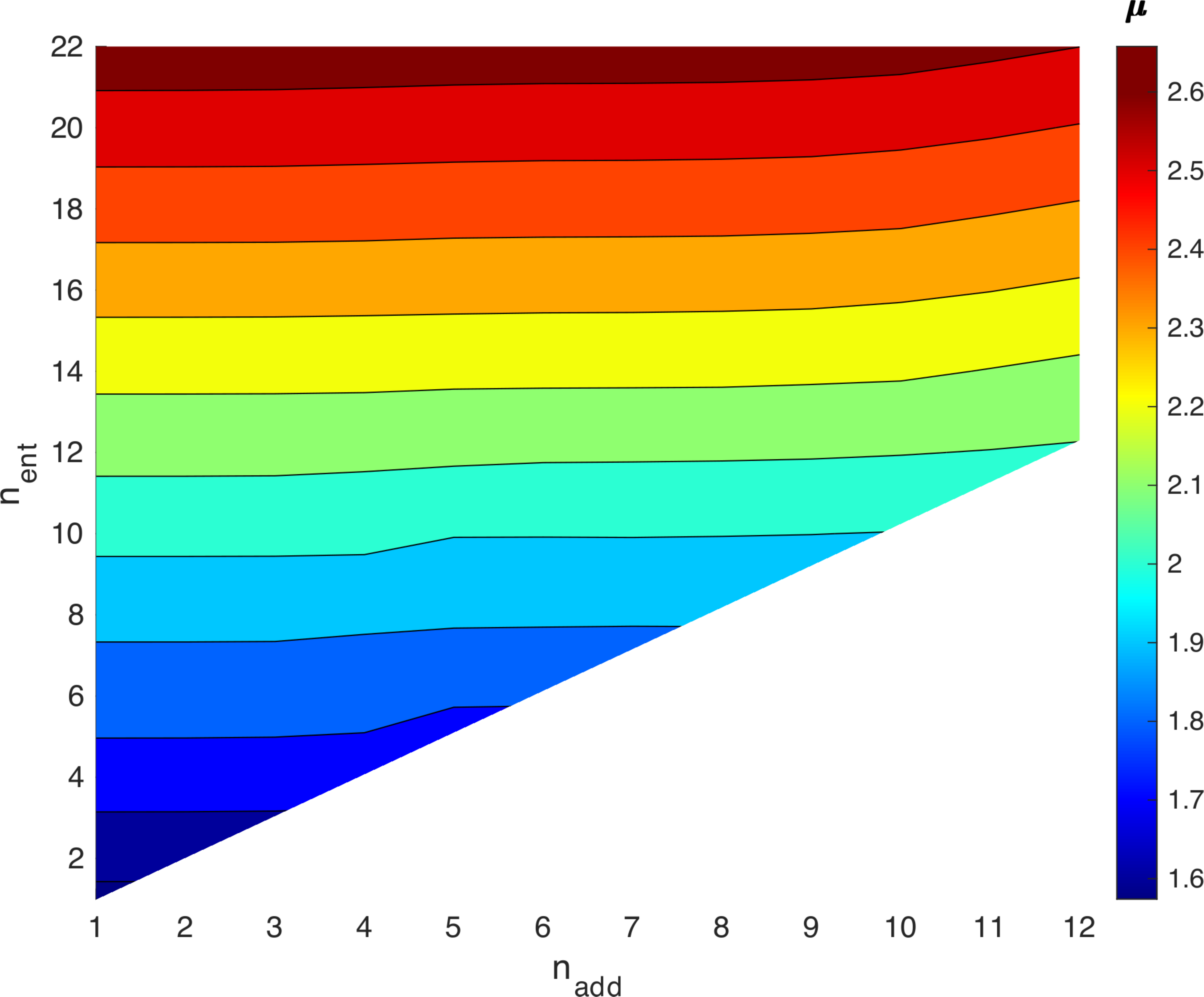} \label{fig:test_1_mu}}
     \vspace{0.3cm}
     \subfloat[][]{\includegraphics[width=5.7cm]{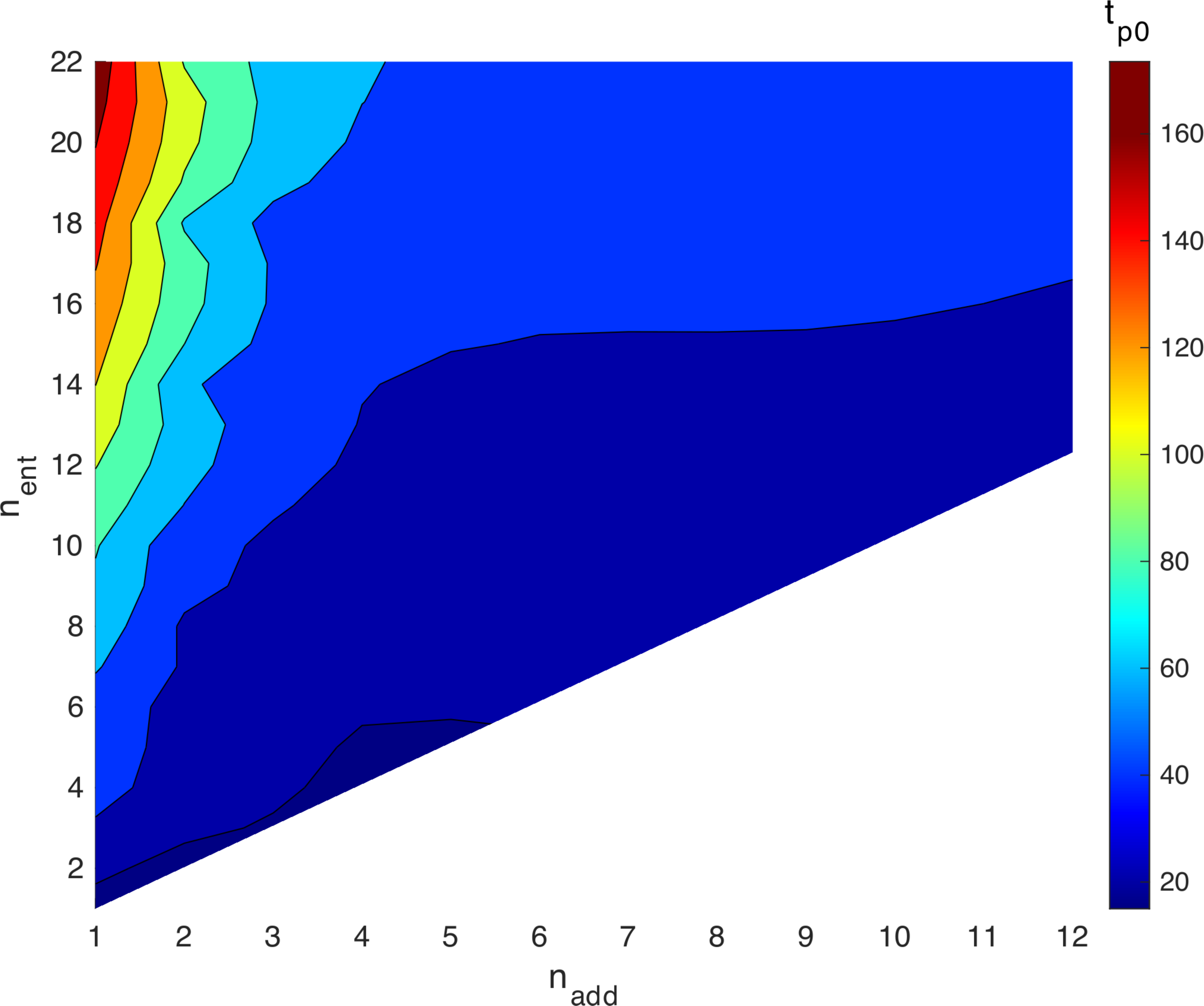} \label{fig:test_1_tp0}}
     \hspace{0.2cm}
     \subfloat[][]{\includegraphics[width=5.7cm]{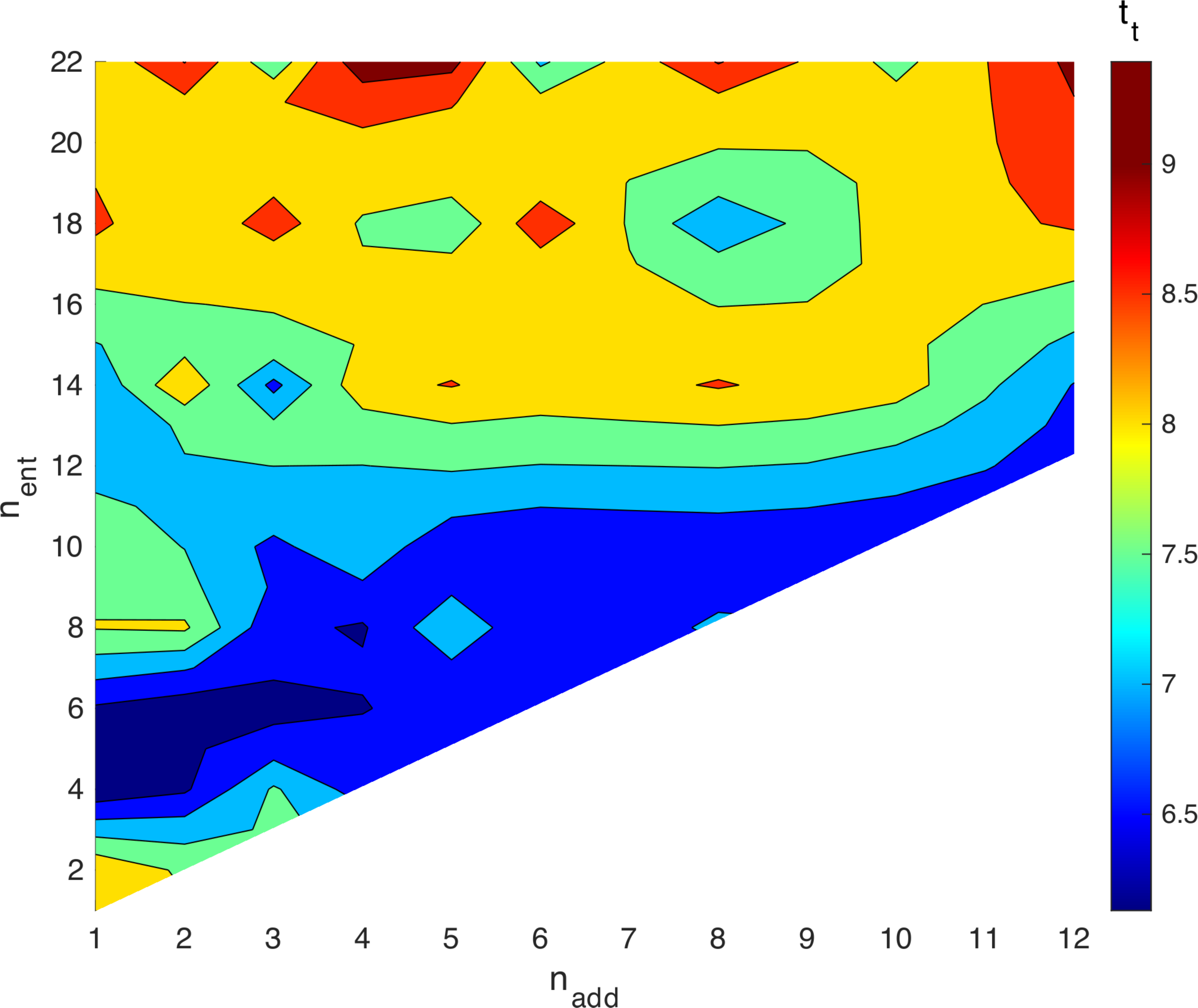} \label{fig:test_1_tt}}
     \caption{Test 1: Sensitivity analysis on the pair $n_{\text{ent}}$, $n_{\text{add}}$ in terms of iterations to converge (a), preconditioner density (b), time to compute the first stage of the preconditioner (c) and total time per time step (d).}
    \label{fig:test_1_dyn}
\end{figure}

As to the dynamic technique, Figure~\ref{fig:test_1_dyn} shows the results of a sensitivity analysis carried out on the two user-specified parameters $n_{\textup{ent}}$ and $n_{\textup{add}}$, governing the expansion of the initial pattern $Q^{(m)}_{(0)}=Q^{(m)}_{A_{p \pi}}$, versus the number of iterations to converge $n_{\text{it}}$, the preconditioner density $\mu$, the time to compute the pre-processing stage of the preconditioner $t_{p_0}$ and the total CPU time $t_t$ per time step. 
All the possible settings therein allow to accelerate the convergence compared to the base case of Table~\ref{tab:test_1_static}. The most interesting results are located in the blue to light-blue area in Figures~\ref{fig:test_1_iter} and \ref{fig:test_1_tt}, characterized by values of $n_{\textup{ent}}$ between 4 and 12. Such an interval was also confirmed by the outcome of the static technique, in particular runs 1 and 2, where the number of new entries added per column is 6 and 12, respectively, with a very similar overall performance. Figure~\ref{fig:test_1_mu} and \ref{fig:test_1_tp0} are self-explanatory; the higher $n_{\textup{ent}}$ the denser the preconditioner and the lower $n_{\textup{add}}$ the more iterations are needed to expand the native sparsity pattern, and thus the higher $t_{p_0}$. 
\begin{figure}
    \centering
    \includegraphics[width=12.5cm]{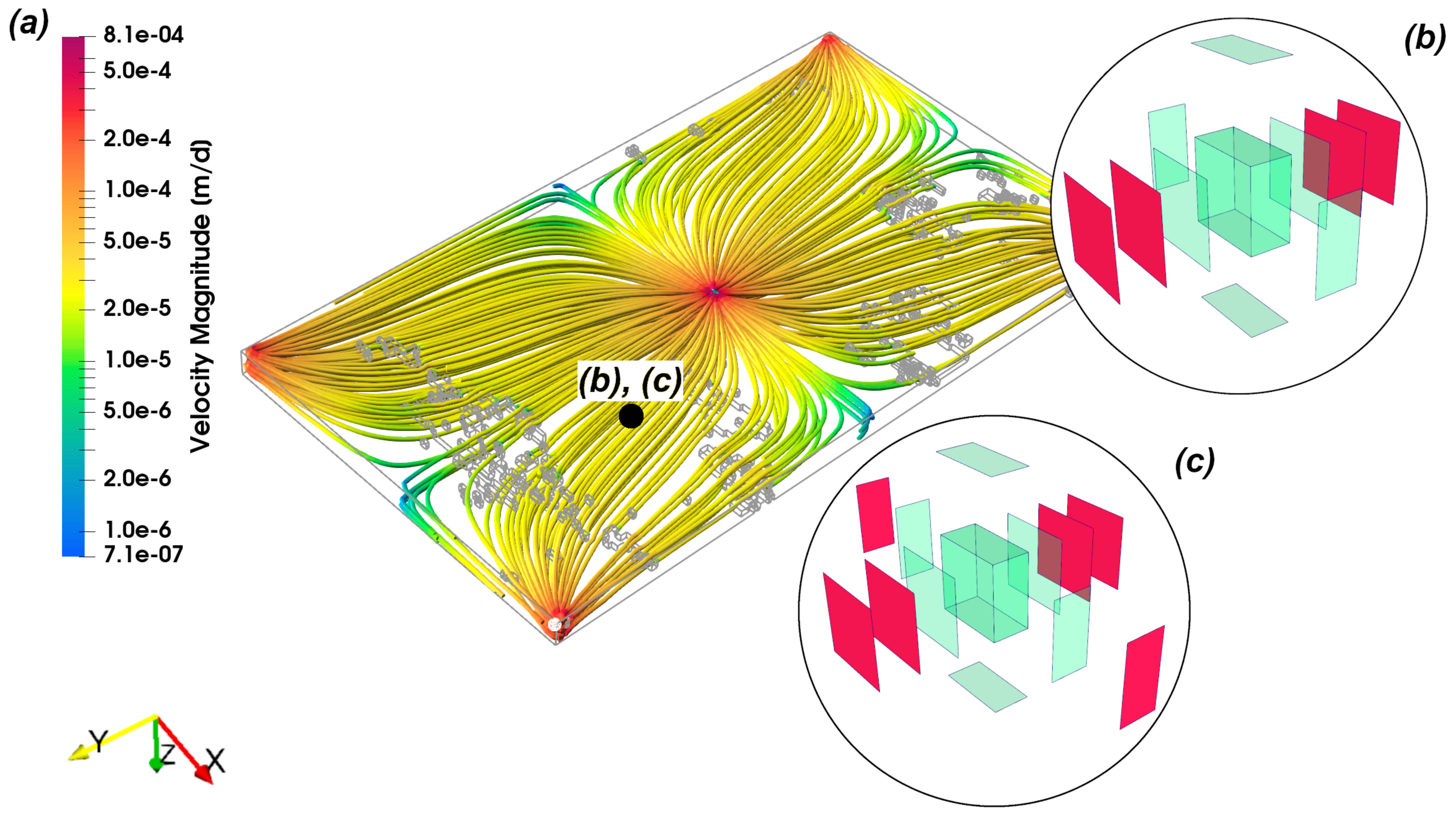}
    \caption{Test 1: Physical interpretation, based on the flux distribution (a), of two dynamic patterns, obtained with the settings (1,4) and (2,6) for the pair ($n_{\textup{add}},n_{\textup{ent}}$) (see Figure~\ref{fig:test_1_dyn}) in panels (b) and (c), respectively. The red faces represent the extension of the original $A_{p \pi}^T$ pattern in light-green. The resulting patterns, in subpanels (b) and (c), are uniform throughout the grid.}
    \label{fig:test_1_dyn_pat}
\end{figure}
It is interesting to provide the dynamic technique with a physical interpretation, so as to visually locate the position of the faces associated with the entries connected by the dynamically-formed optimal patterns. This analysis confirms the connection between the static and dynamic strategies, possibly inspiring the selection of better static connections from the the dynamic patterns. 
In particular, we focus on the pairs $(n_{\text{add}},n_{\text{ent}})$ equal to (1,4) and (2,6). 
Notice in Figure~\ref{fig:test_1_dyn_pat} that the newly added faces are primarily located following the main flow direction. 
This is consistent with the physical principle which the static technique relies on. In the scenario of Test 1, in fact, the water flow is essentially horizontal, since the wells penetrate the full thickness of the reservoir, with a principal component along the $y$ axis. These patterns are uniform throughout the grid.
\begin{table} \scriptsize
\caption{Test 1: Effect of the time step size on the behavior of EDFA preconditioner.}
\label{tab:test_1_delta_t}
\centering
\vspace{0.2cm}
\begin{tabular}{cccccccc}
\toprule
$\Delta t$ & $n_{\textup{it}}$ & $\chi_{\infty}$ & $t_{p_0}$ & $t_p$ & $t_s$ & $t_t$ & $\mu$ \\
$\text{[d]}$        &          &              & [s]       &  [s]  &  [s]  & [s]   &      \\
\midrule
0.01 &  3   & 0.004   & 35.53 & 0.04 & 0.11 & 0.15 & 1.749 \\
0.1  &  6   & 0.017   &   -   & 0.04 & 0.20 & 0.24 & 1.749 \\
1    &  21  & 0.099   &   -   & 0.04 & 0.78 & 0.82 & 1.749 \\
10   &  56  & 5.351   &   -   & 0.04 & 1.87 & 1.91 & 1.749 \\
100  &  127 & 66.061  &   -   & 0.04 & 4.23 & 4.27 & 1.749 \\
1000 &  199 & 571.157 &   -   & 0.04 & 6.66 & 6.70 & 1.749 \\
Steady & 185 &   -    &   -   & 0.04 & 6.09 & 6.13 & 1.749 \\
\bottomrule
\end{tabular}
\end{table}

Finally, the effect of the time step size on the EDFA preconditioner in a full transient simulation is assessed in Table~\ref{tab:test_1_delta_t}. The preconditioner is built by means of the dynamic technique, with the setting (1,4) for the pair $(n_{\textup{add}},n_{\textup{ent}})$. The range investigated spans the interval [0.01,1000] days, with the associated $\chi_{\infty}$ parameter up to almost 600. Notice how the number of iterations grows progressively as $\Delta t$ increases, where the steady state condition can be regarded approximately as an upper limit.

\subsection{Test 2: Dome reservoir with homogeneous and isotropic hydraulic properties}
\label{sec:test_2}
The evolution from a structured to an unstructured grid introduces new challenges for the design of efficient non-zero patterns due to the modification of the native stencils of blocks $A_{\pi \pi}$ and, most of all, $A_{p \pi}^T$, which is accompanied by the increase in the number of non-zeros of those two blocks, as shown in Table~\ref{tab:test_setup}. Specifically, $A_{p \pi}^T$ is 1.9 times denser than with the planar mesh. 
In fact, the face-to-element connection building the non-zero pattern of the typical column of $A_{p \pi}^T$ moves from  Figure~\ref{fig:pattern_struct} to  Figure~\ref{fig:pattern_unstruct}.

This modification has a very relevant effect on the numerical performance of the base case (see Table~\ref{tab:test_2_static}), which does not converge after 2,000 iterations. Enlarging the face-to-element connections with patterns A, B and D (runs 1, 2 and 4), however, improves very rapidly the quality of $\widetilde{S}$. 
By comparing Figure~\ref{fig:pattern_unstruct} with~\ref{fig:pattern_A}, \ref{fig:pattern_B} and \ref{fig:pattern_D}, the adoption of patterns A, B and D consists in a simultaneous expansion and contraction of $A_{p \pi}^T$'s column non-zero pattern, since at most 24 entries are discarded and others are added in a variable number. It is a sort of implicit moderate expansion accompanied by a significant filtration. Applying pre- or post-filtration when adopting pattern B, which means in practice sparsifying for the second time the relevant pattern, is not beneficial as proved in runs 6 and 7. Notice that the overall preconditioner density is generally higher than in Test 1 (see Table~\ref{tab:test_1_static} for a comparison).
\begin{table}[tbp] \scriptsize
\caption{Test 2: Numerical performance of the static technique. The expression NC means that the solver did not converge within 2,000 iterations.}
\label{tab:test_2_static}
\centering
\vspace{0.2cm}
\begin{tabular}{ccccccccccccc}
\toprule
\# & Pat & Filt & $\tau_{\textup{filt}}$  & $n_{\textup{it}}$ & $t_{p_0}$ & $t_p$ & $t_s$ & $t_t$ & $\mu$\\
   &     &      &                       &                   &   [s]    &  [s]  &  [s]  & [s]   &     \\
\midrule
0  & Base & *    &    *    &  NC  &  5.37 & 0.29 &  *    &   *   &  2.112 \\
1  &  A   & *    &    *    &  896 &  4.10 & 0.30 & 58.91 & 59.21 &  2.177 \\
2  &  B   & *    &    *    &  118 &  4.28 & 0.62 & 8.64  & 9.26  &  2.734 \\
3  &  C   & *    &    *    &  NC  &  6.00 & 0.29 &  *    &   *   &  2.112 \\
4  &  D   & *    &    *    &  136 &  4.48 & 1.04 & 11.24 & 12.28 &  3.372 \\
5  &  E   & *    &    *    &  NC  &  7.78 & 1.36 &  *    &  *    &  3.707 \\
6  &  B   & Post & 1.E-7  &  160 &  4.03 & 10.59 & 10.45 & 21.04 &  2.071 \\
7  &  B            & Pre  & 1.E-3   &  117 &  7.51 & 0.66 & 8.77 & 9.43  &  2.734 \\
\bottomrule
\end{tabular}
\end{table}
%
%
%
%
\begin{figure}[tb]
    \centering
    \subfloat[][]{
    \begin{tikzpicture}[scale=0.75]
        \begin{semilogxaxis}
            [ ylabel={$n_{\textup{it}}$},
              xlabel={$\tau_{\textup{filt}}$},
              grid = major,
              legend entries = {$n_{\textup{ent}}=10$,$n_{\textup{ent}}=12$,$n_{\textup{ent}}=14$,$n_{\textup{ent}}=18$},
              legend pos=outer north east,
              ytick = {0,250,500,750,1000,1250,1500,1750,2000},
              xmin = 1.e-5, xmax = 1,
              ymin = 0, ymax = 2000,
              ]
            \addplot table {Graphs/Iter_post_10.txt};
            \addplot table {Graphs/Iter_post_12.txt};
            \addplot table {Graphs/Iter_post_14.txt};
            \addplot table {Graphs/Iter_post_18.txt};
        \end{semilogxaxis}
    \end{tikzpicture}
    \label{fig:test_2_post_iter}
    }
    \subfloat[][]{
    \begin{tikzpicture}[scale=0.75]
        \begin{semilogxaxis}
            [ ylabel={$t_t$ [s]},
              xlabel={$\tau_{\textup{filt}}$},
              grid = major,
              legend entries = {$n_{\textup{ent}}=10$,$n_{\textup{ent}}=12$,$n_{\textup{ent}}=14$,$n_{\textup{ent}}=18$},
              legend pos=outer north east,
              ytick = {0,25,50,75,100,125,150,175,200},
              xmin = 1.e-5, xmax = 1,
              ymin = 0, ymax = 200,
              ]
            \addplot table {Graphs/T_t_post_10.txt};
            \addplot table {Graphs/T_t_post_12.txt};
            \addplot table {Graphs/T_t_post_14.txt};
            \addplot table {Graphs/T_t_post_18.txt};
        \end{semilogxaxis}
    \end{tikzpicture}
    \label{fig:test_2_post_tt}
    }
    \vspace{0.2cm}
    \subfloat[][]{
    \begin{tikzpicture}[scale=0.75]
        \begin{semilogxaxis}
            [ ylabel={$n_{\textup{it}}$},
              xlabel={$\tau_{\textup{filt}}$},
              grid = major,
              legend entries = {$n_{\textup{ent}}=10$,$n_{\textup{ent}}=12$,$n_{\textup{ent}}=14$,$n_{\textup{ent}}=18$},
              legend pos=outer north east,
              ytick = {0,250,500,750,1000,1250,1500,1750,2000},
              xmin = 1.e-5, xmax = 1,
              ymin = 0, ymax = 2000,
              ]
            \addplot table {Graphs/Iter_post_D_10.txt};
            \addplot table {Graphs/Iter_post_D_12.txt};
            \addplot table {Graphs/Iter_post_D_14.txt};
            \addplot table {Graphs/Iter_post_D_18.txt};
        \end{semilogxaxis}
    \end{tikzpicture}
    \label{fig:test_2_post_D_iter}
    }
    \subfloat[][]{
    \begin{tikzpicture}[scale=0.75]
        \begin{semilogxaxis}
            [ ylabel={$t_t$ [s]},
              xlabel={$\tau_{\textup{filt}}$},
              grid = major,
              legend entries = {$n_{\textup{ent}}=10$,$n_{\textup{ent}}=12$,$n_{\textup{ent}}=14$,$n_{\textup{ent}}=18$},
              legend pos=outer north east,
              xmin = 1.e-5, xmax = 1,
              ymin = 0, ymax = 200,
              ytick = {0,25,50,75,100,125,150,175,200},
              ]
            \addplot table {Graphs/T_t_post_D_10.txt};
            \addplot table {Graphs/T_t_post_D_12.txt};
            \addplot table {Graphs/T_t_post_D_14.txt};
            \addplot table {Graphs/T_t_post_D_18.txt};
        \end{semilogxaxis}
    \end{tikzpicture}
    \label{fig:test_2_post_D_tt}
    }
    \caption{Test 2: Sensitivity analysis on the post-filtration tolerance $\tau_{\textup{filt}}$ for different values of $n_{\textup{ent}}$. $n_{\textup{add}}$ is kept constant and equal to 4. Post-filtration is applied to $\widetilde{S}$, (a,b), and $\widetilde{H}$, (c,d), and the results are expressed in terms of number of iterations to converge, $n_{\textup{it}}$, (a,c) and total solution time per time step, $t_t$, (b,d). The maximum number of iterations of Bi-CGStab is set equal to 2,000. The best result, i.e., $n_{\textup{it}}=98$ and $t_t$=6.00 s, is obtained by setting $n_{\textup{ent}}$=12 and performing post-filtration on $\widetilde{H}$ with $\tau_{\textup{filt}}=$1.E-3 (c-d).} 
    \label{fig:test_2_post}
\end{figure}
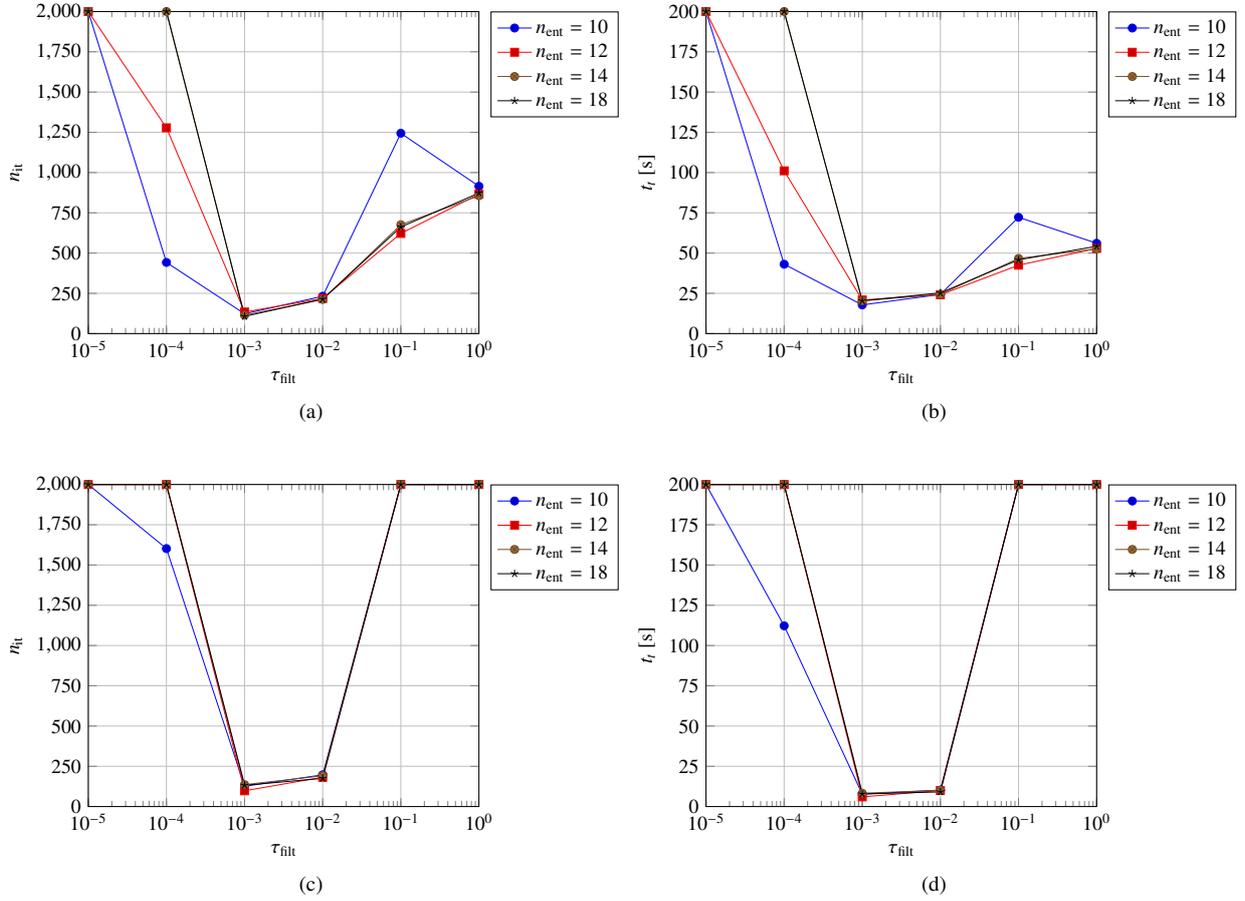

As to the dynamic technique, the same strategy, consisting of the expansion and contraction of the original $A_{p \pi}^T$ pattern, has been followed. 
Figure~\ref{fig:test_2_post} reports the outcomes of a sensitivity analysis on the post-filtration tolerance for different values of $n_{\textup{ent}}$, ranging from 10 to 18 with $n_{\textup{add}}=4$. The outcomes are expressed in terms of iteration count to converge (\ref{fig:test_2_post_iter}) and total solution time per time step (\ref{fig:test_2_post_tt}). The best performance is achieved for $\tau_{\textup{filt}}=$1.E-3, where the number of iterations to converge is similar to that obtained with pattern B in Table~\ref{tab:test_2_static}. 
An effective way to limit the post-filtration cost consists of applying it to $\widetilde{H}$ as a pre-processing effort. This strategy is successfully investigated in Figures~\ref{fig:test_2_post_D_iter} and \ref{fig:test_2_post_D_tt}. Notice that, while the number of iterations remains approximately the same or even improved, the CPU time per time step is halved, thus making the dynamically-formed preconditioner competitive to the best statically-derived one (run 2 in Table~\ref{tab:test_2_static}).

\subsection{Test 3: Plain reservoir with heterogeneous and anisotropic hydraulic conductivity}
\label{sec:test_3}
Introducing a heterogeneous and anisotropic conductivity field in the planar reservoir application worsens the conditioning properties of the associated problem, as shown by the increase in the number of iterations of the base case (Table~\ref{tab:test_3_static}) with respect to Test 1 (Table~\ref{tab:test_1_static}). Like in Test 1, patterns A and E (runs 1, 5) provide improved results that can slightly benefit from further sparsification of the approximate Schur complement. Specifically, pre- and post-filtration are characterized barely by the same performance (runs 6 to 9) with pre-filtration a little more efficient in this application.
%
\begin{table}[tb] \scriptsize
\caption{Test 3: Numerical performance of the static technique.}
\label{tab:test_3_static}
\centering
\vspace{0.2cm}
\begin{tabular}{ccccccccccccc}
\toprule
\# & Pat & Filt & $\tau_{\textup{filt}}$  & $n_{\textup{it}}$ & $t_{p_0}$ & $t_p$ & $t_s$ & $t_t$ & $\mu$\\
   &     &      &                       &                   &   [s]    &  [s]  &  [s]  & [s]   &     \\
\midrule
0  & Base & *    &    *    &  395 &  2.67 & 0.03 & 11.57 & 11.60 &  1.518 \\
1  &  A   & *    &    *    &  279 &  3.28 & 0.03 & 8.75  & 8.78  &  1.739 \\
2  &  B   & *    &    *    &  315 &  3.40 & 0.05 & 10.43 & 10.48 &  1.953 \\
3  &  C   & *    &    *    &  445 &  4.69 & 0.02 & 13.13 & 13.15 &  1.518 \\
4  &  D   & *    &    *    &  303 &  3.80 & 0.06 & 10.59 & 10.65 &  2.160 \\
5  &  E   & *    &    *    &  290 &  4.81 & 0.04 & 9.19  & 9.23  &  1.739 \\
6  &  A   & Post (on $\widetilde{H}$) & 1.E-7  &  267 &  7.08 & 0.03 & 8.78 & 8.81 &  1.730 \\
7  &  E   & Post (on $\widetilde{H}$)          & 1.E-8  &  267 &  8.12 & 0.04 & 8.77 & 8.81 &  1.733 \\
8  &  A    & Pre  & 1.E-3   &  266 &  5.81 & 0.04 & 8.50 & 8.54  &  1.729 \\
9  &  E    & Pre  & 1.E-4   &  260 &  10.36 & 0.04 & 7.78 & 7.81  &  1.731 \\
\bottomrule
\end{tabular}
\end{table}
\begin{figure}[tb]
     \centering
     \subfloat[][]{\includegraphics[width=5.7cm]{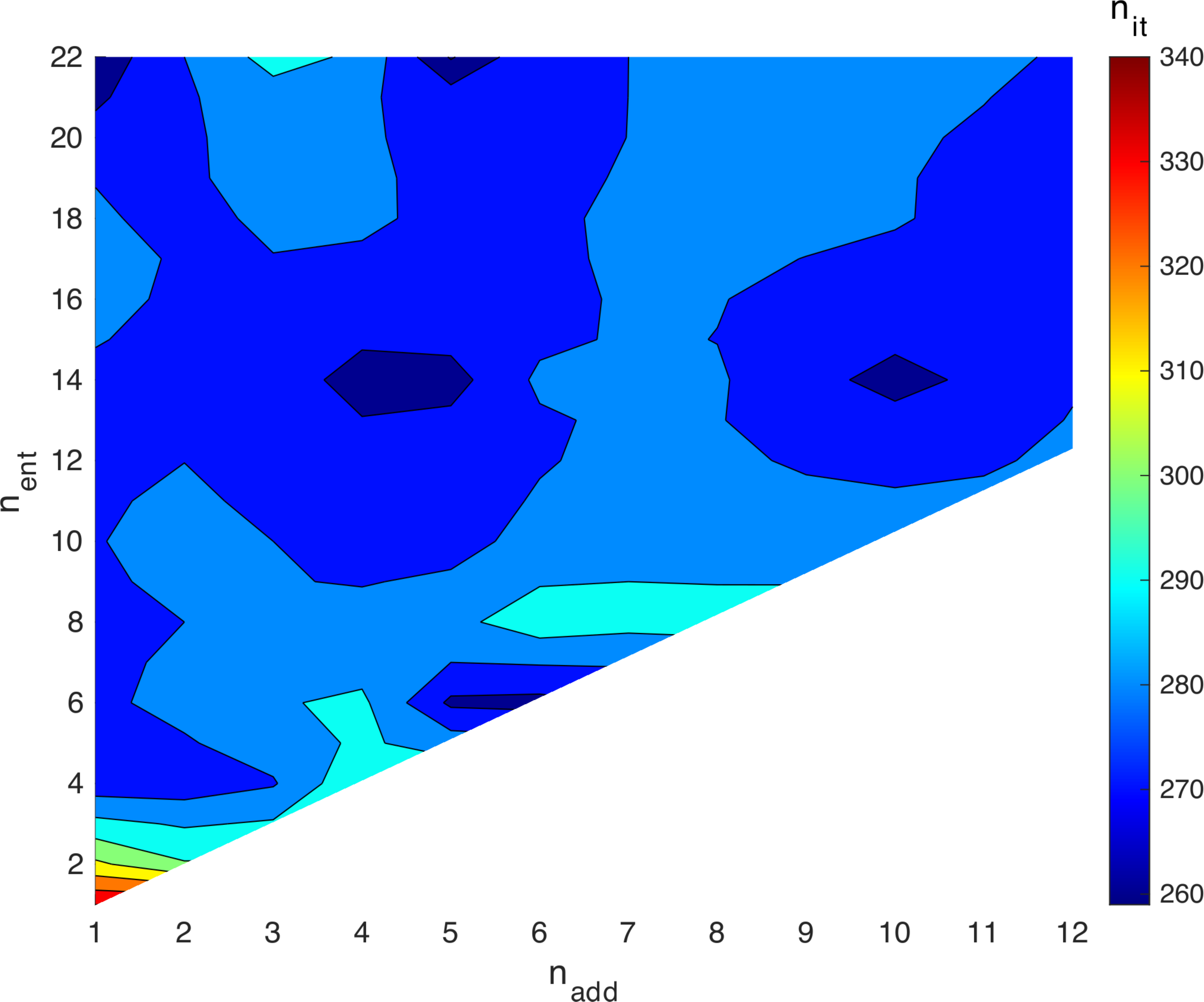} \label{fig:test_3_iter}}
     \hspace{0.2cm}
     \subfloat[][]{\includegraphics[width=5.7cm]{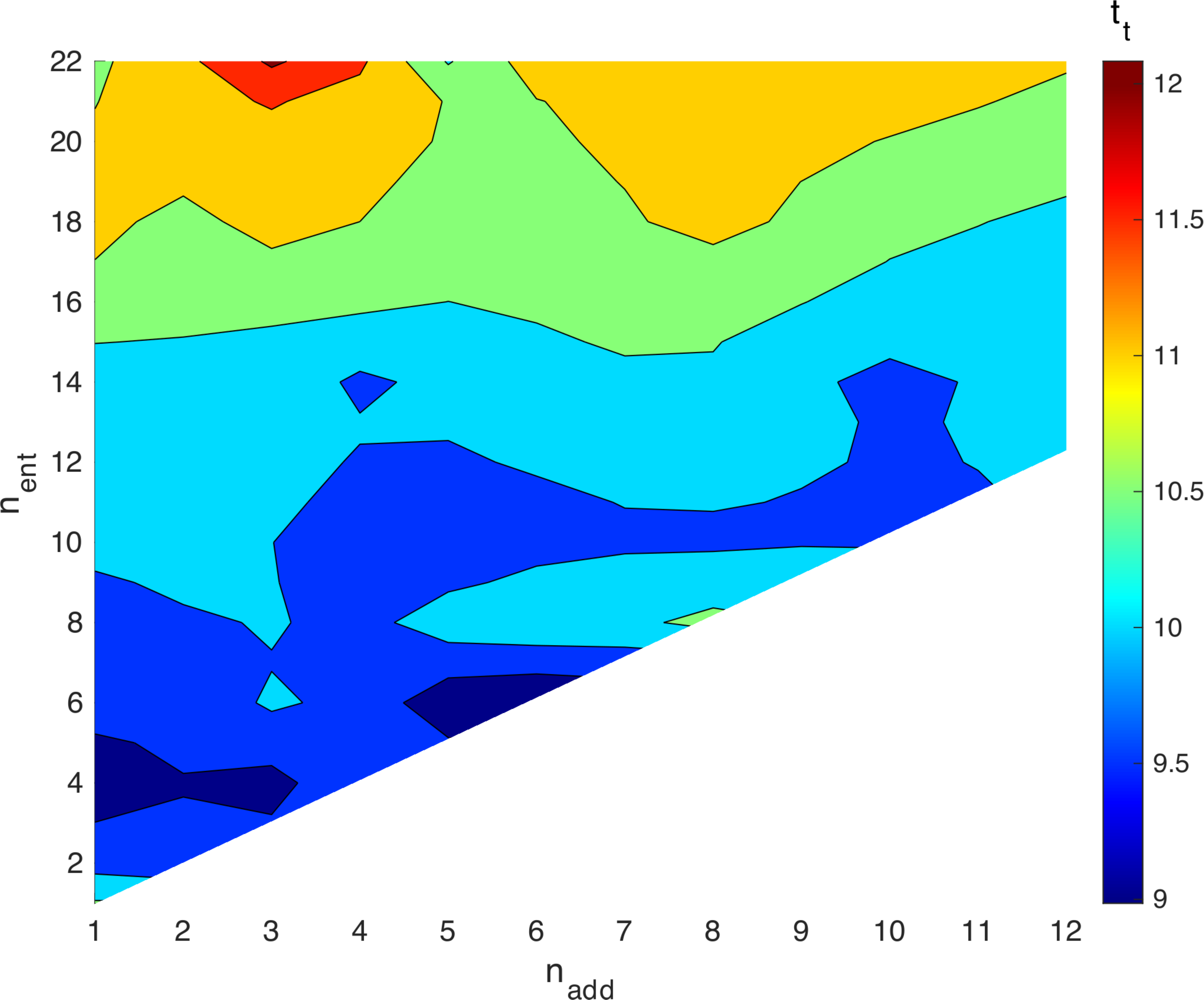} \label{fig:test_3_tt}}
     \vspace{0.3cm}
     \subfloat[][]{\includegraphics[width=5.7cm]{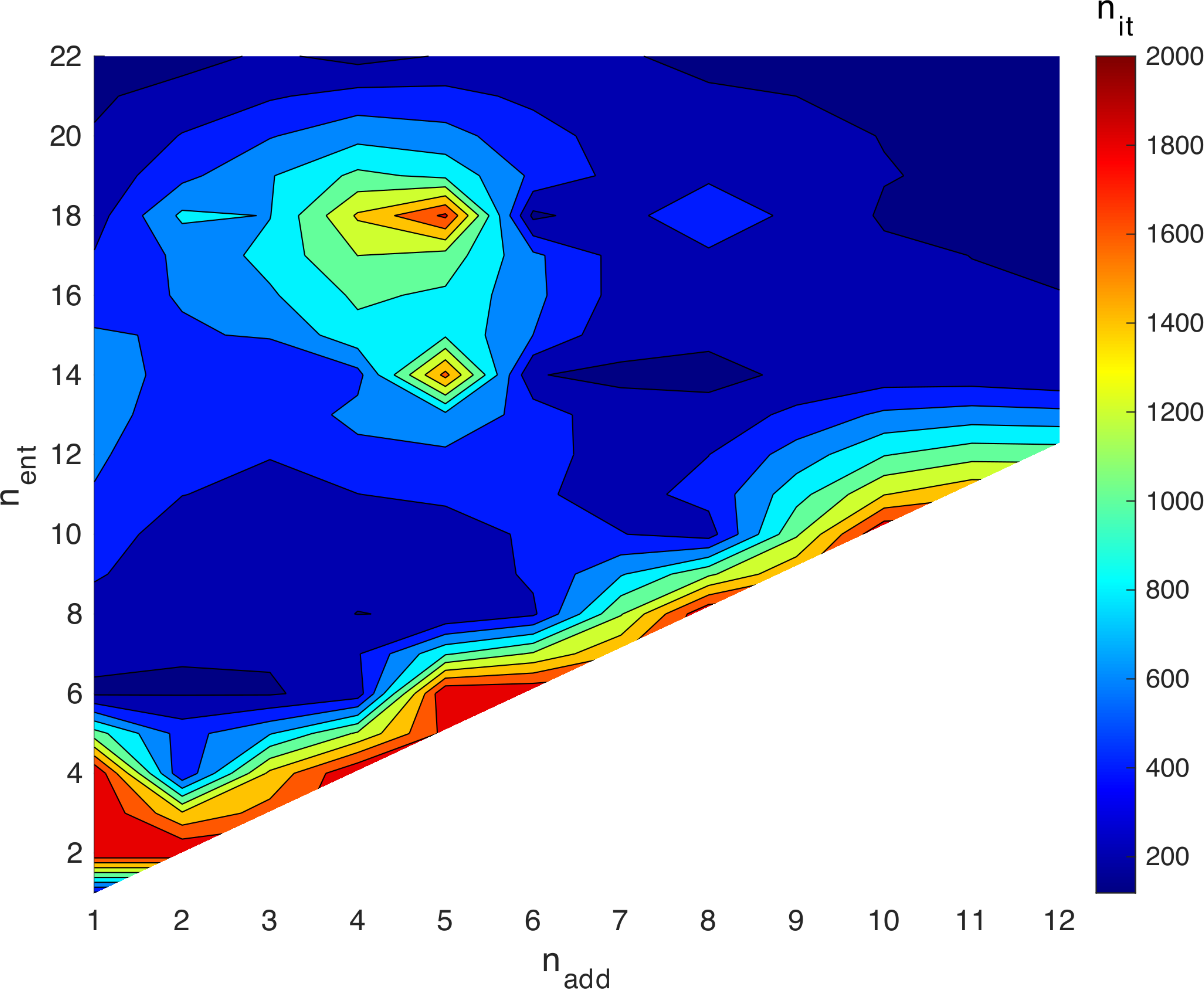} \label{fig:test_4_iter}}
     \hspace{0.2cm}
     \subfloat[][]{\includegraphics[width=5.7cm]{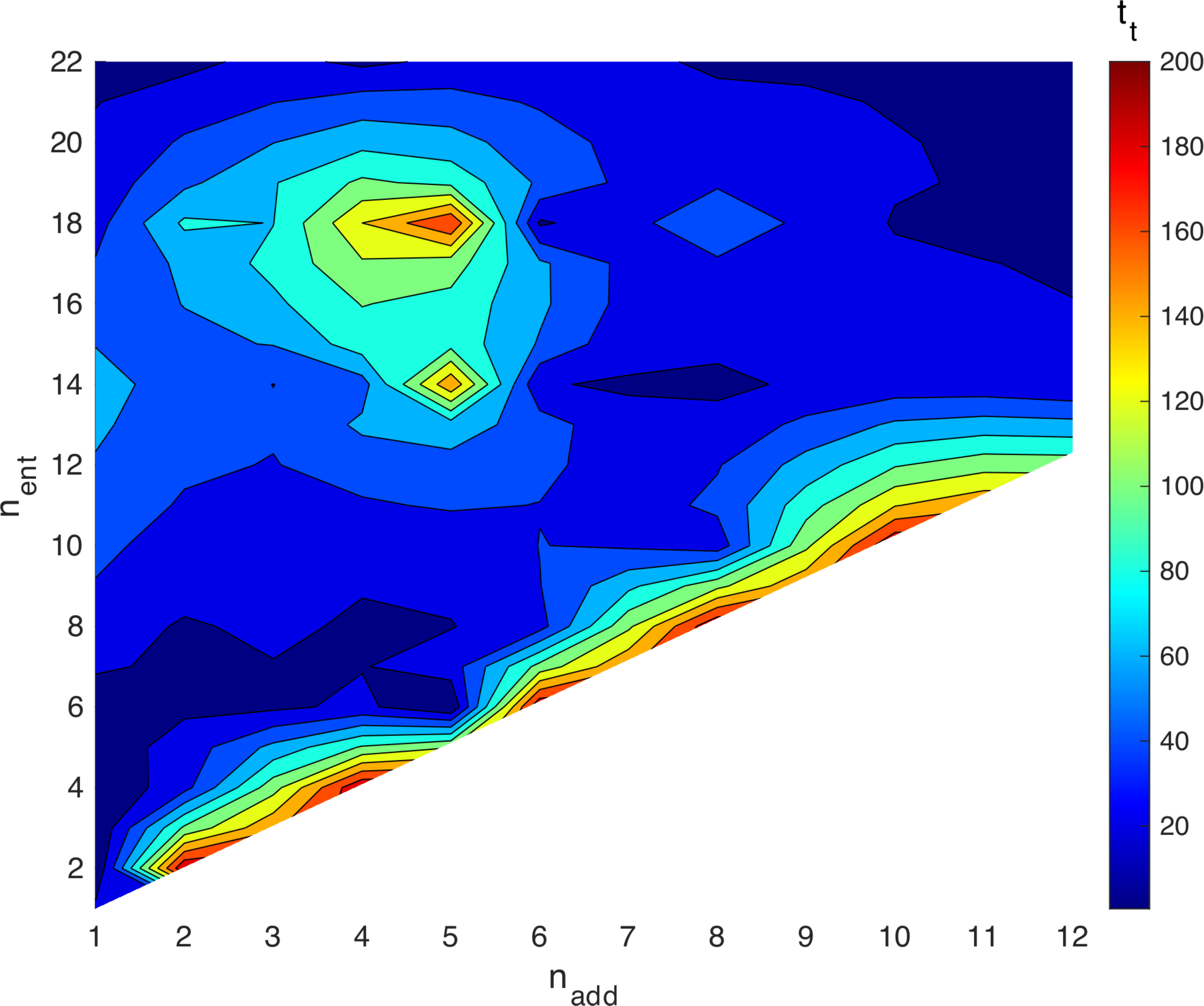} \label{fig:test_4_tt}}
     \caption{Test 3 and 4: Sensitivity analysis on the pair $(n_{\text{add}},n_{\text{ent}})$ for Test 3 (a), (b) and Test 4 (c), (d). The outcome is expressed in terms of iterations to converge (a), (c) and total time per time step (b), (d). 
     }
    \label{fig:test_3_4_dyn}
\end{figure}

A sensitivity analysis on the dynamic parameters $n_{\textup{ent}}$ and $n_{\textup{add}}$ is shown in Figures~\ref{fig:test_3_iter} and \ref{fig:test_3_tt} in terms of number of iterations to converge $n_{\text{it}}$ and total solution time per time step $t_t$, respectively. All the analyzed combinations succeed in accelerating the convergence with respect to the base case (run 0 in Table~\ref{tab:test_3_static}). The most efficient settings are located in the bottom left portion of graph~\ref{fig:test_3_tt} in the interval $2 \leq n_{\textup{ent}} \leq 8$, where $t_t$ is minimized. Specifically, the fastest convergence is achieved with the setting (6,6) for the pair $(n_{\textup{add}},n_{\textup{ent}})$, where $n_{it}=267$ and $t_t=8.98$ s. In the same figure notice how $t_t$ grows with $n_{\textup{ent}}$, depending only partially on $n_{\textup{add}}$.
\begin{table}[tb] \scriptsize
\caption{Test 3: CPU times and memory requirements for Matlab $\texttt{backslash}$ operator and Bi-CGStab preconditioned with an ILU($\tau$) approximation of the whole matrix $\mathcal{A}$, where $\tau=0.001$. Due to memory limitation the test with the direct solver has been carried out on a different platform equipped with an Intel\textregistered Xeon\texttrademark CPU E5-1620 v4 at 3.5 GHz with 64 GB of RAM.}
\label{tab:test_3_bench}
\centering
\vspace{0.2cm}
\begin{tabular}{cccccccc}
\toprule
Method  & $n_{\textup{it}}$ & $t_{p_0}$ & $t_p$ & $t_s$ & $t_t$ & Memory peak & $\mu$ \\
  &  &   [s]    &  [s]  &  [s]  & [s] & [GB] &  \\
\midrule
Matlab \textbackslash{}   & - & - & - & 205.01 & 205.01 & 23.68 & - \\
ILU($\tau$)  & 113 & - & 248.25 & 4.11 & 252.36 & - & 2.730 \\
\bottomrule
\end{tabular}
\end{table}

Finally, a direct solver, such as Matlab $\texttt{backslash}$ operator, and Bi-CGStab accelerated by a global preconditioner, such as a threshold-based ILU($\tau$) of $\mathcal{A}$ as available in Matlab,  are used to benchmark the performance of the proposed EDFA preconditoner in the steady state case. The relevant outcomes are conveyed in Table~\ref{tab:test_3_bench}, where the memory peak reached during the solving phase with Matlab $\texttt{backslash}$ replaces the preconditioner density as a measure of the solver memory footprint. In both cases, the solution time $t_t$, as well as the memory requirements, are by far higher than those obtained with the EDFA preconditioner (see Table~\ref{tab:test_3_static} and Figures~\ref{fig:test_3_iter} and \ref{fig:test_3_tt} for reference).

\subsection{Test 4: Dome reservoir with full tensor heterogeneous and isotropic hydraulic conductivity}
In this final application, the dome-structured reservoir is characterized by a full-tensor heterogeneous conductivity field, which is obtained by rotating the element local axes $x$ and $y$, following the curvature of the domain~\citep{Abushaikha2020}:
\begin{linenomath}
\begin{equation}
    \hat{K}^E = R_{xy}^E K^E R_{xy}^{E,T} \quad \text{for} \ E=1,\ldots, N_e
\end{equation}
\end{linenomath}
where $K^E$ is the element diagonal conductivity matrix and $R_{xy}^E = R_y(-\theta_y^E) R_x(-\theta_x^E)$ is the overall rotation matrix. $\theta_x^E$ and $\theta_y^E$ are the average rotations of the domain surface, within element $E$, around the global Cartesian reference system as per the right-hand rule.

The main results for the static strategy are provided in Table~\ref{tab:test_4_static}. The base case performance (run 0) is not satisfactory 
and the only pattern yielding an appreciable acceleration is D (runs 1-5). 
Moreover, in two cases the solver has not converged yet after 2,000 iterations. Filtration here appears to be mandatory and effective, as shown by runs 6 and 7, where a significant post-filtration ($\tau_{\textup{filt}}=$1.E-2) on factor $\widetilde{H}$ has been applied to patterns D and B. This confirms that the native patterns seem to involve too many connections, which turn out to be not significant to capture the complicated flux nature of this test case. 
On the other hand, pre-filtration (runs 8 and 9) does not represent a consistent alternative, due to the larger preconditioner density, which leads to an increase in the application costs even for a smaller number of iterations.  
\begin{table}[tbp] \scriptsize
\caption{Test 4: Numerical performance of the static technique.}
\label{tab:test_4_static}
\centering
\vspace{0.2cm}
\begin{tabular}{ccccccccccccc}
\toprule
\# & Pat & Filt & $\tau_{\textup{filt}}$  & $n_{\textup{it}}$ & $t_{p_0}$ & $t_p$ & $t_s$ & $t_t$ & $\mu$\\
   &     &      &                       &                   &   [s]    &  [s]  &  [s]  & [s]   &     \\
\midrule
0  & Base & *    &    *    &  667 &  5.24 & 0.30 & 42.30 & 42.60 &  2.112 \\
1  &  A   & *    &    *    &  NC  &  4.03 & 0.32 & *     & *     &  2.177 \\
2  &  B   & *    &    *    &  521 &  4.35 & 0.62 & 38.50 & 39.12 &  2.734 \\
3  &  C   & *    &    *    &  667 &  6.07 & 0.30 & 43.10 & 43.40 &  2.112 \\
4  &  D   & *    &    *    &  386 &  4.56 & 1.05 & 31.76 & 32.81 &  3.372 \\
5  &  E   & *    &    *    &  NC  &  7.72 & 1.37 & *     & *     &  3.707 \\
6  &  D   & Post (on $\widetilde{H}$) & 1.E-2  &  231 &  17.40 & 0.04 & 11.79 & 11.83 &  1.214 \\
7  &  B   & Post (on $\widetilde{H}$)    & 1.E-2  &  233 &  14.24 & 0.04 & 12.35 & 12.39 &  1.216 \\
8  &  D   & Pre  & 1.E-3   &  217 &  8.65 & 1.05 & 17.96 & 19.01  &  3.195 \\
9  &  B   & Pre           & 1.E-3   &  424 &  7.49 & 0.63 & 31.68 & 32.31  &  2.717 \\
\bottomrule
\end{tabular}
\end{table}

As to the dynamic technique, the sensitivity analysis on $n_{\textup{ent}}$ and $n_{\textup{add}}$ in Figures~\ref{fig:test_4_iter} and \ref{fig:test_4_tt} reveals that there exists a wide blue area characterized by competitive settings, both in terms of number of iterations and total solving time per step. In this regard, the most attractive portion of the graphs remains the bottom left one. 
Table~\ref{tab:test_4_dynamic} aims at assessing the effect of filtration on the dynamically-formed preconditioner obtained with the settings $n_{\textup{ent}}=6$ and  $n_{\textup{add}}=1$. 
The outcomes of post-filtration, applied to both $\widetilde{S}$ and $\widetilde{H}$ in runs 1 and 2 respectively, are here reported to emphasize the greater efficiency of the latter strategy. Although performing post-filtration on $\widetilde{H}$ gives a higher density preconditioner, the filtration is anticipated in the pre-processing stage of the preconditioner set-up, while preserving the quality in the approximation of $\widetilde{S}$. 
Post-filtration on $\widetilde{H}$ (run 2), in fact, allows to approximately halve the density of the original preconditioner (run 0) to the benefit from a 38\% reduction in the total solution time. Pre-filtration, on the other hand, seems not to be effective (run 3). 
Notice that, in this application, combinations where $n_{\textup{ent}}=n_{\textup{add}}$, i.e., all the prescribed new entries are subsumed at once, give a bad quality $\widetilde{S}$, hence at least two steps of the dynamic procedure are recommended.
\begin{table}
\scriptsize
\caption{Test 4: Pre- and post-filtration on the dynamically-formed preconditioner. Run 0 is obtained with: $n_{\textup{ent}}=6$ and $n_{\textup{add}}=1$.} 
\label{tab:test_4_dynamic}
\centering
\vspace{0.2cm}
\begin{tabular}{ccccccccc}
\toprule
\#  & Filt & $\tau_{\text{filt}}$ & $n_{\text{iter}}$  & $t_{p_0}$ & $t_p$ & $t_s$ & $t_t$ & $\mu$\\
    &      &                      &                    &  [s]      &  [s]  &  [s]  & [s]   &  \\   
\midrule
0   & *    &  *                   &   172              &  69.39    & 1.13  &  14.04 &  15.17 & 3.072 \\
1   & Post &  1.E-3               &   158              &  70.31    & 12.47 &  9.63  &  22.10 & 1.395 \\
2   & Post (on $\widetilde{H}$) &  1.E-3   &   160 &  79.21    & 0.13  &  9.33  &  9.45  & 1.552 \\
3   & Pre  &  1.E-8               &   149              &  73.38    & 1.12  &  12.09 &  13.21 & 3.072 \\
\bottomrule
\end{tabular}
\end{table}
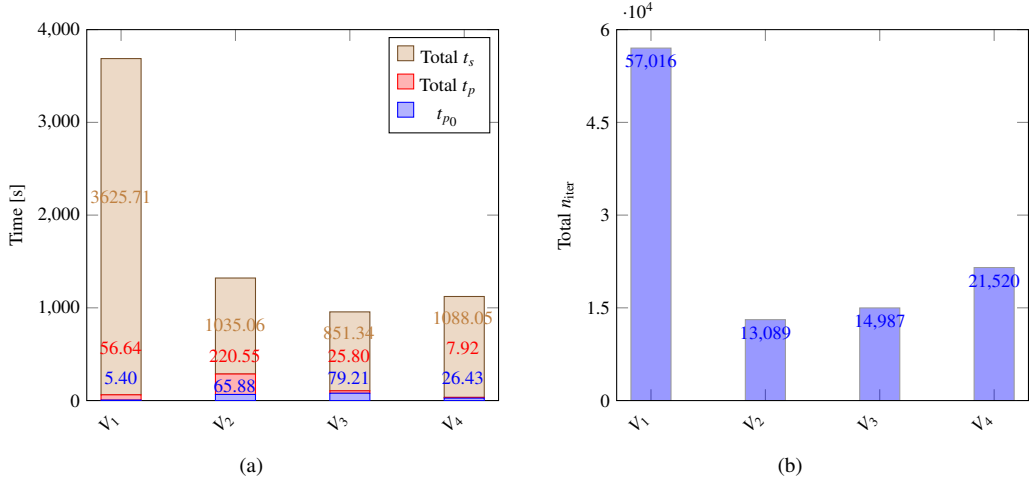
\begin{figure}[tb]
\scriptsize
\centering
\subfloat[][]{
\begin{tikzpicture}
\begin{axis}[
    height=6.5cm,
    width=7cm,
    ybar stacked,
	bar width=15pt,
    ylabel={Time [s]},
    symbolic x coords={$V_1$, $V_2$, $V_3$, $V_4$},
    xtick=data,
    x tick label style={yshift=-4pt,rotate=45,anchor=east},
    ymin = 0, ymax = 4000,
    reverse legend,
    ]
\addplot+[ybar] plot coordinates {($V_1$,5.40) ($V_2$,65.88) ($V_3$,79.21) ($V_4$,26.43)};
\addplot+[ybar] plot coordinates {($V_1$,56.64) ($V_2$,220.55) ($V_3$,25.80) ($V_4$,7.92)};
\addplot+[ybar] plot coordinates {($V_1$,3625.71) ($V_2$,1035.06) ($V_3$,851.34) ($V_4$,1088.05)};
\legend{\strut $t_{p_0}$, \strut Total $t_{p}$, \strut Total $t_{s}$ }
\end{axis}
\node [align=center,color=blue] at (0.45,0.3){5.40};
\node [align=center,color=red] at (0.45,0.7){56.64};
\node [align=center,color=brown] at (0.45,2.7){3625.71};
\node [align=center,color=blue] at (1.95,0.2){65.88};
\node [align=center,color=red] at (1.95,0.6){220.55};
\node [align=center,color=brown] at (1.95,1.0){1035.06};
\node [align=center,color=blue] at (3.45,0.3){79.21};
\node [align=center,color=red] at (3.45,0.6){25.80};
\node [align=center,color=brown] at (3.45,0.9){851.34};
\node [align=center,color=blue] at (4.95,0.3){26.43};
\node [align=center,color=red] at (4.95,0.7){7.92};
\node [align=center,color=brown] at (4.95,1.1){1088.05};
\end{tikzpicture}
\label{fig:test_4_full_sim_time}
}
\hspace{0.5cm}
\subfloat[][]{
\begin{tikzpicture}
\begin{axis}[
    ybar,
    height=6.5cm,
    width=7cm,
	bar width=15pt,
	xtick align=inside,nodes near coords,
    every node near coord/.append style={anchor=north,opacity=1,color=blue},
    legend style={at={(0.5,-0.20)},
      anchor=north,legend columns=-1},
    ylabel={Total $n_{\textup{iter}}$},
    symbolic x coords={$V_1$, $V_2$, $V_3$, $V_4$},
    xtick=data,
    ymin = 0, ymax = 60000,
    x tick label style={yshift=-4pt,rotate=45,anchor=east},
    ytick = {0,15000,30000,45000,60000},
    ]
\plot[fill=blue,opacity=0.4] plot coordinates {($V_1$,57016) ($V_2$,13089) ($V_3$,14987) ($V_4$,21520)};
\end{axis}
\end{tikzpicture}
\label{fig:test_4_full_sim_iter}
}
\caption{Analyzing the performance of four versions of the EDFA preconditioner in a full-transient simulation. The outcomes are expressed in terms of overall solution time (a), broken down into its basic components, and total number of iterations (b). The preconditioners are obtained with the following settings: $V_1 \to$ Base pattern (run 0 in Table~\ref{tab:test_4_static}), $V_2 \to$ Dynamic strategy with $n_{\textup{ent}}=6, \ n_{\textup{add}}=1$ 
(run 0 in Table~\ref{tab:test_4_dynamic}), $V_3 \to$ as $V_2$ with post-filtration on $\widetilde{H}$ and $\tau_{\textup{filt}}=$1.E-3 (run 2 in Table~\ref{tab:test_4_dynamic}) and $V_4 \to$ Static strategy with pattern D, post-filtration on $\widetilde{H}$ and $\tau_{\textup{filt}}=$1.E-2 (run 6 in Table~\ref{tab:test_4_static}).}
\label{fig:test_4_full_sim}
\end{figure}

Despite the high density, the unfiltered dynamically-formed preconditioner in Table~\ref{tab:test_4_dynamic} is by far more competitive than the corresponding static alternative in Table~\ref{tab:test_4_static} (runs 0-5). An explanation comes from the analysis of the dynamic pattern representation vs the position within the dome-grid and the flux distribution, as provided in Figure~\ref{fig:test_4_dyn_pat}. The dynamic technique is capable to flexibly catch and exploit the possibly complex physics of the fluxes behind the problem.
Due to the dome structure of the domain and the heterogeneity of the hydraulic conductivity, the fluid fluxes are not uniformly distributed in the domain. As an effect, the resulting pattern is not the same throughout the grid and can be hardly guessed, hence a statically designed homogeneous pattern is not well-suited to the specific requirements of this application. 

In conclusion, the performance of the EDFA preconditioner is evaluated in a full-transient simulation, reproducing the exploitation of a reservoir, initially undisturbed, under the conditions mentioned in Section~\ref{sec:num_results}, i.e., four injectors (one at each corner) and a producer (in the centre) operating at a constant pressure.
The overall computational times are displayed in Figure~\ref{fig:test_4_full_sim}, where a comparison among a selection of four variants of the EDFA preconditioner 
is proposed. The performance of the basic setting 
($V_1$, run 0 in Table~\ref{tab:test_4_static}) is used as benchmark to test the advantage provided by the most efficient dynamic 
($V_2$ and $V_3$, 
runs 0 and 2 in Table~\ref{tab:test_4_dynamic}) and static strategies 
($V_4$, run 6 in Table~\ref{tab:test_4_static}). The simulated time interval is 130 days 
with 200 time steps. The other relevant settings 
are: $\Delta t_{\max}=5$ d, $\Delta p_T=5$ bar, $\Delta t_{\textup{mult}}=1.1$. The best performance 
is given by variant $V_3$, which proved to be the most competitive one also in the previous steady state analysis. 
Nevertheless, the difference with $V_2$ and $V_4$ is not as significant as the steady state analysis might have suggested. 
In any case, 
all trials $V_2$, $V_3$ and $V_4$ clearly outperform $V_1$, reducing the total time to about one fourth.
\begin{figure}
    \centering
    \includegraphics[width=12.5cm]{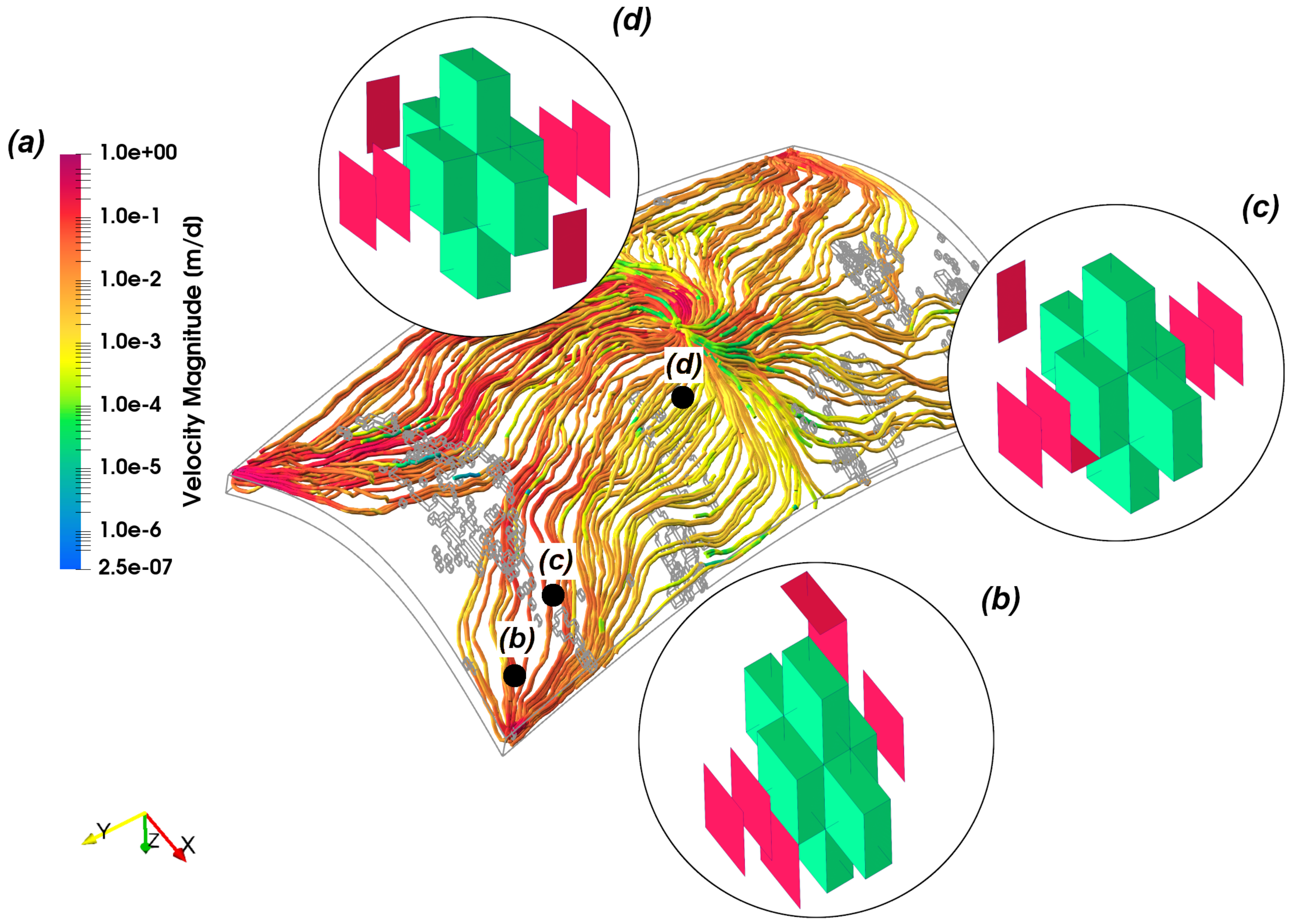}
    \caption{Test 4: Physical interpretation, based on the flux distribution (a), of the dynamic pattern, obtained with the settings $n_{\textup{ent}}=6$ and $n_{\textup{add}}=1$ 
    at three different locations. The red faces represent the extension of the original $A_{p \pi}^T$ pattern in light-green.}
    \label{fig:test_4_dyn_pat}
\end{figure}

\section{Discussion}
Achieving a fast and cheap solution to the sequence of systems~\eqref{eq:system} that stem from a full MHFE-FV single-phase flow simulation is the main objective of this paper. 
Given the peculiar properties of system~\eqref{eq:system}, which is characterized by a non-symmetric generalized saddle-point structure and usually ill-conditioned blocks, an original preconditioning strategy, denoted as EDFA, has been specifically designed to accelerate the convergence of Krylov subspace solvers. The key features of the proposed approach are twofold: (i) the exploitation of the decoupling factors, $G$ and $F$, of the system matrix $\mathcal{A}$ block LDU decomposition to recast the Schur complement avoiding the inversion of the leading block (equation~\eqref{eq:schur_compl}); (ii) the inexact computation of these factors by solving two independent sets of MRHS systems~\eqref{eq:syst_MRHS}, by means of a combination of restriction and prolongation operators based on the non-zero pattern of $\widetilde{G}$ and $\widetilde{F}$.
Such an approximation turns out to be optimal, with respect to $A_{\pi \pi}$-inner product, for the selected non-zero pattern.
The ability to recognize the most representative entries of $G$ and $F$ 
is key to obtain high quality approximations at a workable sparsity and achieve a fast convergence. This is the motivation behind the introduction of two strategies, namely static and dynamic, aimed at selecting the sparsity pattern of the two decoupling factors by customizing the face-to-element connections contained in the columns of $A_{p \pi}^T$, depending on the properties of the problem at hand.

A physics-based concept underlies the static technique, since the entries of the generic column of the decoupling factors correspond to the face pressure unknowns resulting from the fluid flow arising in a certain small and compact partition of the physical domain. Expanding such partition in order to incorporate the most relevant entries is a way for enlarging effectively the native sparsity pattern. The size of the augmented partition, as well as the location of the faces associated with the newly-added entries, give rise to a plethora 
of possible combinations. As an example, in this paper we considered five general-purpose prototypes (Figures~\ref{fig:pattern_A}-\ref{fig:pattern_E}). 
This strategy turns out to be effective when the modeller has a robust idea of the flux distribution and the resulting pattern can be more or less uniformly extended throughout the domain (compare for instance Figures~\ref{fig:test_1_dyn_pat} and \ref{fig:test_4_dyn_pat}).

On the other hand, a fully algebraic framework is introduced to define a dynamic variant, where $n_{\textup{ent}}$ new entries to the initial pattern are progressively added to an initial guess 
at the locations corresponding to the largest components of the prolonged residual (equation~\eqref{eq:prol_res}). 
Compared to the static variant, the dynamic technique is computationally more demanding because, at every step of the pattern construction, a static solution is needed, however this cost can be easily amortized during a transient simulation and take advantage of an almost ideal parallel degree. Furthermore, this techniques is more flexible, since it is capable to implicitly capture the physics behind the problem without the modeller being aware of it and might be used in a black-box fashion as well.
Pre- and post-filtration techniques have been introduced with the twofold purpose of controlling the density of $\widetilde{S}$ and improving the quality of its inexact inverse application by removing possibly detrimental near-zero entries.

The extensive experimental phase of Section~\ref{sec:num_results} helped understand the potential of the EDFA preconditioner in different settings, according to the grid type and hydraulic properties, but also suggests indications about default optimal settings.
All the tests revealed that less than 12 new entries are actually needed in addition to the original column patterns, 
but it is the grid type, i.e., whether structured or unstructured, that mostly influences 
the optimal EDFA preconditioner set-up. With a structured grid, the suggested static patterns in Figure~\ref{fig:static_patterns} seem to be appropriate, and especially E and A as shown in Tables~\ref{tab:test_1_static} and \ref{tab:test_3_static}. By comparing Figures~\ref{fig:pattern_A} and \ref{fig:pattern_E}, notice that the physical structure of these patterns is similar as the elements involved in the their definition is the same. By distinction, for the dynamic variant optimal results have been obtained by setting $n_{\textup{ent}}$ between 4 and 10, 
with $n_{\textup{add}} \approx n_{\textup{ent}}$. Filtration is not strictly necessary, even though some good results were obtained with pre-filtration and $\tau_{\textup{filt}}$ between 1.E-4 and 1.E-3 (Table~\ref{tab:test_3_static}). Conversely, with an unstructured grid, patterns B and D turned out to be the winning choice for the static technique, whereas for the dynamic one 
it is advisable to set $n_{\textup{add}} < n_{\textup{ent}} < 10$ (Figure~\ref{fig:test_4_tt}). 
A significant post-filtration with 1.E-3 $\leq \tau_{\textup{filt}} \leq$ 1.E-2 proved to be effective to accelerate, or even to allow for, convergence (see for instance Tables~\ref{tab:test_2_static} and \ref{tab:test_4_static}). 
Post-filtration on $\widetilde{H}$, rather than on $\widetilde{S}$, should be preferred. 

Finally, Table~\ref{tab:test_1_delta_t} and Figure~\ref{fig:test_4_full_sim} showed that 
the preconditioner set-up for steady-state conditions plays the role of a worst-case scenario.
In particular, the smaller $\Delta t$, the better the accuracy of $\widetilde{S}$ (equation~\eqref{eq:schur_compl}). 
The reason for this behavior comes from the structure of $A_{p p}$ (equation~\eqref{eq:expr_A_pp}), where the diagonal entries depend on the inverse of $\Delta t$. Therefore, when $\Delta t$ is small, they tend to prevail over the other contributions in $\widetilde{S}$, and $A_{pp}$ becomes diagonally dominant. 
This observation suggests a different set-up strategy of the preconditioner, because building $\widetilde{F}$ and $\widetilde{G}$ with the optimal settings at steady state might be too conservative. The two decoupling factors can be approximated more than once during a full-transient simulation, since usually $\Delta t$ increases as steady state is approached, and cheaper approximations can be effective at the initial stages. 

\section{Conclusions}
In this paper, we introduce a novel preconditioning technique, denoted as EDFA,
for the solution of the sequence of non-symmetric block linear systems arising from the original MHFE-FV discretization of flow problems in porous media developed in \cite{Abushaikha2017}. The proposed method is
based on the approximation of the decoupling factors of the system matrix by using appropriate restriction operators for the sake of the Schur complement computation. 
The experimental phase proved its robustness and reliability in different settings, depending on the structure of the grid and the properties of the hydraulic conductivity tensor. 
The EDFA preconditioning strategy exhibits several attractive features, in particular:
\begin{enumerate}
    \item Since the process for building the preconditioner is based on the solution of the set of independent MRHS systems~\eqref{eq:syst_MRHS}, the set-up stage is inherently parallel and can fully exploit the architecture of modern computing platforms;
    \item The overall set-up stage can be split in a two-step procedure, where the first stage is performed only at the beginning of a full-transient simulation and the second stage at each time step;
    \item The largest computational cost, associated with the approximation of $G$ and $F$, is concentrated in the first stage, so it can be effectively amortized during a full simulation.
\end{enumerate}
Both the static and dynamic variants proved to be overall efficient. A winner does not stand out clearly, even though it might be better to rely on the dynamic technique when the flux distribution is highly variable throughout the domain and it is hard to define a uniform static pattern prototype. All the same, it is appreciable the relatively easy setting up and cheapness of the static variant and the flexibility of the dynamic.

Research is currently ongoing to develop a fully parallel C++ implementation of the proposed solver and extend the formulation to multi-phase MHFE-FV reservoir models.

\section{Acknowledgment}

This publication was supported by the National Priorities Research Program grant NPRP10-0208-170407  from Qatar National Research Fund.

\appendix
\section{MHFE-FV matrices}
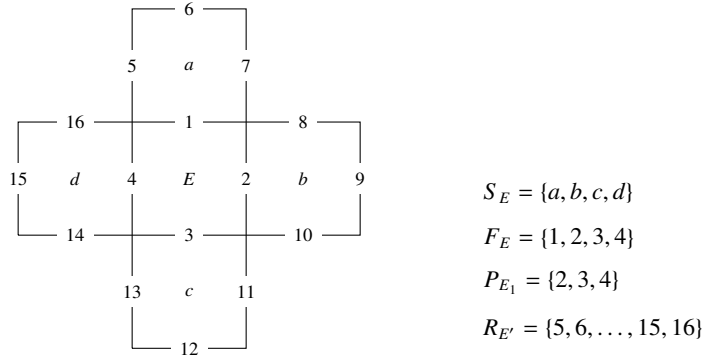
\begin{figure}[tb]
\centering
\begin{tikzpicture}[scale=1.5]
\draw (1,0) -- ++(0,3) -- ++(1,0) -- ++(0,-3) -- ++(-1,0);
\draw (0,1) -- ++(3,0) -- ++(0,1) -- ++(-3,0) -- ++(0,-1);
%
\node[align=center, font = \scriptsize] at (1.5,1.5){$E$};
\node[align=center, font = \scriptsize, fill=white] at (1.5,2.5){$a$};
\node[align=center, font = \scriptsize, fill=white] at (2.5,1.5){$b$};
\node[align=center, font = \scriptsize, fill=white] at (1.5,0.5){$c$};
\node[align=center, font = \scriptsize, fill=white] at (0.5,1.5){$d$};
%
\node[align=center, font = \scriptsize, fill=white] at (1.5,2){$1$};
\node[align=center, font = \scriptsize, fill=white] at (2,1.5){$2$};
\node[align=center, font = \scriptsize, fill=white] at (1.5,1){$3$};
\node[align=center, font = \scriptsize, fill=white] at (1,1.5){$4$};
\node[align=center, font = \scriptsize, fill=white] at (1,2.5){$5$};
\node[align=center, font = \scriptsize, fill=white] at (1.5,3){$6$};
\node[align=center, font = \scriptsize, fill=white] at (2,2.5){$7$};
\node[align=center, font = \scriptsize, fill=white] at (2.5,2){$8$};
\node[align=center, font = \scriptsize, fill=white] at (3,1.5){$9$};
\node[align=center, font = \scriptsize, fill=white] at (2.5,1){$10$};
\node[align=center, font = \scriptsize, fill=white] at (2,0.5){$11$};
\node[align=center, font = \scriptsize, fill=white] at (1.5,0){$12$};
\node[align=center, font = \scriptsize, fill=white] at (1,0.5){$13$};
\node[align=center, font = \scriptsize, fill=white] at (0.5,1){$14$};
\node[align=center, font = \scriptsize, fill=white] at (0,1.5){$15$};
\node[align=center, font = \scriptsize, fill=white] at (0.5,2){$16$};
\node[align=right, font = \small, anchor=south west] at (4,1.2){$S_E = \left\{ a,b,c,d \right\}$};
\node[align=right, font = \small, anchor=south west] at (4,0.8){$F_E = \left\{ 1,2,3,4 \right\}$};
\node[align=right, font = \small, anchor=south west] at (4,0.4){$P_{E_1} = \left\{ 2,3,4 \right\}$};
\node[align=right, font = \small, anchor=south west] at (4,0){$R_{E'} = \left\{ 5,6,\ldots,15,16 \right\}$};
\end{tikzpicture}
\caption{Two-dimensional sketch of the element/face connections with an example of the basic sets $S_E$, $F_E$, $P_{E_i}$ and $R_{E'}$.}
\label{fig:patt_A}
\end{figure}
In this number, the expressions for the four sub-matrices of system~\eqref{eq:system} are provided based on the definitions~\eqref{eq:fluxes_MHFEM_3}, \eqref{eq:discr_bal_FV_2}, \eqref{eq:fluxes_MHFEM_STRONG} and \eqref{eq:cont_fluxes}.
Let us consider the element $E$ of the grid, the set of its faces, $F_E$, and neighbours, $S_E$, where $E'$ is an element of $S_E$. $P_{E_i}$ denotes the set of faces of $E$ without the one, say $i$, shared with $E'$, i.e., $F_E - F_E \cap F_{E'} = F_E - i$. By extension, $P_{{E'}_i} = F_{E'} - i$. $R_{E'}$, formally defined as $\cup_{i \in F_E} P_{{E'}_i}$, is the set of faces of the elements in $S_E$ not shared with $E$. Figure~\ref{fig:patt_A} provides a graphical interpretation of the aforementioned sets in a 2-D setting. Let also $g: k \to p=g(k)$ be the function that converts the global matrix index $k$ into the local one $p$. The expressions for the $\mathcal{A}$ sub-blocks read:
\begin{linenomath}
\begin{align}
    [A_{\pi \pi}]_{ij} : & & & \forall i \in \{1,2, \ldots, N_f\} & & \text{if} \ i =j & & -B_{g(i)g(i)}^{E^{-1}} - B_{g(i)g(i)}^{E'^{-1}}, \notag\\
                         & & & & & \text{if} \ j \in P_{E_i} & & -B_{g(i)g(j)}^{E^{-1}}, \\
                         & & & & & \text{if} \ j \in P_{{E'}_i} & & -B_{g(i)g(j)}^{E'^{-1}}, \notag \\
             \notag \\
    [A_{\pi p}]_{ij} : & & & \forall i \in \{1,2, \ldots, N_f\} & & \text{if} \ j \in \left\{ E,E' \right\}  & & \sum_{k=1}^{N_f^E} B_{g(i)k}^{j^{-1}}, \\
                  \notag \\
    [A_{p \pi}]_{Ej} : & & & \forall E \in \{1,2, \ldots, N_e\} & & \text{if} \ j \in F_E  & & - \sum_{k \in K_E} B_{g(k)g(j)}^{E^{-1}} \frac{B_{g(k)g(k)}^{{E'}^{-1}}}{B_{g(k)g(k)}^{{E'}^{-1}}+B_{g(k)g(k)}^{E^{-1}}}, \quad \text{where} \ K_E = F_E - j, \\
                       & & & & & \text{if} \ j \in R_{E'}  & & B_{g(i)g(j)}^{{E'}^{-1}} \frac{B_{g(i)g(i)}^{E^{-1}}}{B_{g(i)g(i)}^{{E'}^{-1}}+B_{g(i)g(i)}^{E^{-1}}}, \qquad \qquad \ \ \text{where} \ i = F_E \cap F_{E'}, \notag \\
              \notag \\
    [A_{p p}]_{El} : & & & \forall E \in \{1,2, \ldots, N_e\} & &\text{if} \ E = l  & & \sum_{i \in F_E} \frac{B_{g(i)g(i)}^{{E'}^{-1}}}{B_{g(i)g(i)}^{{E'}^{-1}}+B_{g(i)g(i)}^E} \sum_{j \in F_E} B_{g(i)g(j)}^{E^{-1}} + \frac{\Omega^E \overline{c}^E}{\Delta t_n},
    \label{eq:expr_A_pp}\\
                     & & & & & \text{if} \ l \in S_E  & & - \frac{B_{g(i)g(i)}^{E^{-1}}}{B_{g(i)g(i)}^{l^{-1}}+B_{g(i)g(i)}^{E^{-1}}} \sum_{j \in F_l} B_{g(i)g(j)}^{l^{-1}}, \quad \ \text{where} \ i = F_E \cap F_l, \notag
\end{align}
\end{linenomath}


\bibliography{Bib}

\begin{thebibliography}{10}
\expandafter\ifx\csname url\endcsname\relax
  \def\url#1{\texttt{#1}}\fi
\expandafter\ifx\csname urlprefix\endcsname\relax\def\urlprefix{URL }\fi
\expandafter\ifx\csname href\endcsname\relax
  \def\href#1#2{#2} \def\path#1{#1}\fi

\bibitem{Abd2020}
A.~S. Abd, A.~S. Abushaikha, {On the artificial smearing of the node control
  volume finite element method for modelling multi-phase fluid flow in
  heterogeneous porous media}, Transport in Porous Media (2020) In press.

\bibitem{Brezzi1991}
F.~Brezzi, M.~Fortin,
  \href{http://link.springer.com/10.1007/978-1-4612-3172-1}{{Mixed and Hybrid
  Finite Element Methods}}, Vol.~15 of Springer Series in Computational
  Mathematics, Springer-Verlag New York, New York, NY, 1991.
\newblock \href {http://dx.doi.org/10.1007/978-1-4612-3172-1}
  {\path{doi:10.1007/978-1-4612-3172-1}}.
\newline\urlprefix\url{http://link.springer.com/10.1007/978-1-4612-3172-1}

\bibitem{Younes2010}
A.~Younes, P.~Ackerer, F.~Delay,
  \href{http://doi.wiley.com/10.1029/2008RG000277}{{Mixed finite elements for
  solving 2-D diffusion-type equations}}, Reviews of Geophysics 48~(1) (2010)
  RG1004.
\newblock \href {http://dx.doi.org/10.1029/2008RG000277}
  {\path{doi:10.1029/2008RG000277}}.
\newline\urlprefix\url{http://doi.wiley.com/10.1029/2008RG000277}

\bibitem{Mose1994}
R.~Mos{\'{e}}, P.~Siegel, P.~Ackerer, G.~Chavent,
  \href{http://doi.wiley.com/10.1029/94WR01786}{{Application of the mixed
  hybrid finite element approximation in a groundwater flow model: Luxury or
  necessity?}}, Water Resources Research 30~(11) (1994) 3001--3012.
\newblock \href {http://dx.doi.org/10.1029/94WR01786}
  {\path{doi:10.1029/94WR01786}}.
\newline\urlprefix\url{http://doi.wiley.com/10.1029/94WR01786}

\bibitem{Chavent1991}
G.~Chavent, J.~Roberts,
  \href{https://linkinghub.elsevier.com/retrieve/pii/030917089190020O}{{A
  unified physical presentation of mixed, mixed-hybrid finite elements and
  standard finite difference approximations for the determination of velocities
  in waterflow problems}}, Advances in Water Resources 14~(6) (1991) 329--348.
\newblock \href {http://dx.doi.org/10.1016/0309-1708(91)90020-O}
  {\path{doi:10.1016/0309-1708(91)90020-O}}.
\newline\urlprefix\url{https://linkinghub.elsevier.com/retrieve/pii/030917089190020O}

\bibitem{Radu2008}
F.~A. Radu, M.~Bause, A.~Prechtel, S.~Attinger,
  \href{http://link.springer.com/10.1007/978-3-540-69777-0{\_}61}{{A mixed
  hybrid finite element discretization scheme for reactive transport in porous
  media}}, in: K.~Kunisch, G.~Of, O.~Steinbach (Eds.), Numerical Mathematics
  and Advanced Applications, Springer, Berlin, Heidelberg, Berlin, Heidelberg,
  2008, pp. 513--520.
\newblock \href {http://dx.doi.org/10.1007/978-3-540-69777-0_61}
  {\path{doi:10.1007/978-3-540-69777-0_61}}.
\newline\urlprefix\url{http://link.springer.com/10.1007/978-3-540-69777-0{\_}61}

\bibitem{Yoisell2012}
R.~N. Yoisell, O.~F. Cristiane, F.~L. Abimael, M.~M. Sandra, {A mixed-hybrid
  finite element method applied to tracer injection processes}, International
  Journal of Modeling and Simulation for the Petroleum Industry 6~(1) (2012)
  51--59.

\bibitem{Brunner2014}
F.~Brunner, F.~A. Radu, P.~Knabner,
  \href{http://epubs.siam.org/doi/10.1137/130908191}{{Analysis of an
  upwind-mixed hybrid finite element method for transport problems}}, SIAM
  Journal on Numerical Analysis 52~(1) (2014) 83--102.
\newblock \href {http://dx.doi.org/10.1137/130908191}
  {\path{doi:10.1137/130908191}}.
\newline\urlprefix\url{http://epubs.siam.org/doi/10.1137/130908191}

\bibitem{Smejkal2020}
T.~Smejkal, J.~Miky{\v{s}}ka, R.~Fu{\v{c}}{\'{i}}k,
  \href{https://linkinghub.elsevier.com/retrieve/pii/S0017931019343972}{{Numerical
  modelling of adsorption and desorption of water vapor in zeolite 13X using a
  two-temperature model and mixed-hybrid finite element method numerical
  solver}}, International Journal of Heat and Mass Transfer 148 (2020) 119050.
\newblock \href {http://dx.doi.org/10.1016/j.ijheatmasstransfer.2019.119050}
  {\path{doi:10.1016/j.ijheatmasstransfer.2019.119050}}.
\newline\urlprefix\url{https://linkinghub.elsevier.com/retrieve/pii/S0017931019343972}

\bibitem{Niu2020}
C.~Niu, H.~Rui, X.~Hu,
  \href{http://link.springer.com/10.1007/s10596-020-09972-3}{{A stabilized
  hybrid mixed finite element method for poroelasticity}}, Computational
  Geosciences\href {http://dx.doi.org/10.1007/s10596-020-09972-3}
  {\path{doi:10.1007/s10596-020-09972-3}}.
\newline\urlprefix\url{http://link.springer.com/10.1007/s10596-020-09972-3}

\bibitem{Frigo2020}
M.~Frigo, N.~Castelletto, M.~Ferronato, J.~A. White,
  \href{https://linkinghub.elsevier.com/retrieve/pii/S0898122120302765}{{Efficient
  solvers for hybridized three-field mixed finite element coupled
  poromechanics}}, Computers {\&} Mathematics with Applications\href
  {http://dx.doi.org/10.1016/j.camwa.2020.07.010}
  {\path{doi:10.1016/j.camwa.2020.07.010}}.
\newline\urlprefix\url{https://linkinghub.elsevier.com/retrieve/pii/S0898122120302765}

\bibitem{Younes2008}
A.~Younes, V.~Fontaine,
  \href{http://doi.wiley.com/10.1002/nme.2327}{{Efficiency of mixed hybrid
  finite element and multipoint flux approximation methods on quadrangular
  grids and highly anisotropic media}}, International Journal for Numerical
  Methods in Engineering 76~(3) (2008) 314--336.
\newblock \href {http://dx.doi.org/10.1002/nme.2327}
  {\path{doi:10.1002/nme.2327}}.
\newline\urlprefix\url{http://doi.wiley.com/10.1002/nme.2327}

\bibitem{Bause2004}
M.~Bause, P.~Knabner,
  \href{https://linkinghub.elsevier.com/retrieve/pii/S0309170804000600}{{Computation
  of variably saturated subsurface flow by adaptive mixed hybrid finite element
  methods}}, Advances in Water Resources 27~(6) (2004) 565--581.
\newblock \href {http://dx.doi.org/10.1016/j.advwatres.2004.03.005}
  {\path{doi:10.1016/j.advwatres.2004.03.005}}.
\newline\urlprefix\url{https://linkinghub.elsevier.com/retrieve/pii/S0309170804000600}

\bibitem{Belfort2009}
B.~Belfort, F.~Ramasomanana, A.~Younes, F.~Lehmann,
  \href{http://doi.wiley.com/10.2136/vzj2008.0108}{{An efficient lumped mixed
  hybrid finite element formulation for variably saturated groundwater flow}},
  Vadose Zone Journal 8~(2) (2009) 352--362.
\newblock \href {http://dx.doi.org/10.2136/vzj2008.0108}
  {\path{doi:10.2136/vzj2008.0108}}.
\newline\urlprefix\url{http://doi.wiley.com/10.2136/vzj2008.0108}

\bibitem{Fucik2011}
R.~Fu{\v{c}}{\'{i}}k, J.~Miky{\v{s}}ka, {Mixed-hybrid finite element method for
  modelling two-phase flow in porous media}, Journal of Math-for-Industry
  3~(2011C-2) (2011) 9--19.

\bibitem{Abushaikha2015}
A.~S. Abushaikha, M.~J. Blunt, O.~R. Gosselin, C.~C. Pain, M.~D. Jackson,
  \href{https://linkinghub.elsevier.com/retrieve/pii/S0021999115003587}{{Interface
  control volume finite element method for modelling multi-phase fluid flow in
  highly heterogeneous and fractured reservoirs}}, Journal of Computational
  Physics 298 (2015) 41--61.
\newblock \href {http://dx.doi.org/10.1016/j.jcp.2015.05.024}
  {\path{doi:10.1016/j.jcp.2015.05.024}}.
\newline\urlprefix\url{https://linkinghub.elsevier.com/retrieve/pii/S0021999115003587}

\bibitem{Fucik2019}
R.~Fu{\v{c}}{\'{i}}k, J.~Klinkovsk{\'{y}}, J.~Solovsk{\'{y}}, T.~Oberhuber,
  J.~Miky{\v{s}}ka,
  \href{https://linkinghub.elsevier.com/retrieve/pii/S001046551830420X}{{Multidimensional
  mixed–hybrid finite element method for compositional two-phase flow in
  heterogeneous porous media and its parallel implementation on GPU}}, Computer
  Physics Communications 238 (2019) 165--180.
\newblock \href {http://dx.doi.org/10.1016/j.cpc.2018.12.004}
  {\path{doi:10.1016/j.cpc.2018.12.004}}.
\newline\urlprefix\url{https://linkinghub.elsevier.com/retrieve/pii/S001046551830420X}

\bibitem{Hou2016}
J.~Hou, J.~Chen, S.~Sun, Z.~Chen,
  \href{https://linkinghub.elsevier.com/retrieve/pii/S0377042716300309}{{Adaptive
  mixed-hybrid and penalty discontinuous Galerkin method for two-phase flow in
  heterogeneous media}}, Journal of Computational and Applied Mathematics 307
  (2016) 262--283.
\newblock \href {http://dx.doi.org/10.1016/j.cam.2016.01.050}
  {\path{doi:10.1016/j.cam.2016.01.050}}.
\newline\urlprefix\url{https://linkinghub.elsevier.com/retrieve/pii/S0377042716300309}

\bibitem{Moortgat2016}
J.~Moortgat, A.~Firoozabadi,
  \href{https://linkinghub.elsevier.com/retrieve/pii/S0021999116300171}{{Mixed-hybrid
  and vertex-discontinuous-Galerkin finite element modeling of multiphase
  compositional flow on 3D unstructured grids}}, Journal of Computational
  Physics 315 (2016) 476--500.
\newblock \href {http://dx.doi.org/10.1016/j.jcp.2016.03.054}
  {\path{doi:10.1016/j.jcp.2016.03.054}}.
\newline\urlprefix\url{https://linkinghub.elsevier.com/retrieve/pii/S0021999116300171}

\bibitem{Abushaikha2017}
A.~S. Abushaikha, D.~V. Voskov, H.~A. Tchelepi, {Fully implicit mixed-hybrid
  finite-element discretization for general purpose subsurface reservoir
  simulation}, Journal of Computational Physics 346 (2017) 514--538.
\newblock \href {http://dx.doi.org/10.1016/j.jcp.2017.06.034}
  {\path{doi:10.1016/j.jcp.2017.06.034}}.

\bibitem{Hoteit2002}
H.~Hoteit, R.~Mos{\'{e}}, B.~Philippe, P.~Ackerer, J.~Erhel,
  \href{http://doi.wiley.com/10.1002/nme.531}{{The maximum principle violations
  of the mixed-hybrid finite-element method applied to diffusion equations}},
  International Journal for Numerical Methods in Engineering 55~(12) (2002)
  1373--1390.
\newblock \href {http://dx.doi.org/10.1002/nme.531}
  {\path{doi:10.1002/nme.531}}.
\newline\urlprefix\url{http://doi.wiley.com/10.1002/nme.531}

\bibitem{Younes1999}
A.~Younes, R.~Mose, P.~Ackerer, G.~Chavent,
  \href{https://linkinghub.elsevier.com/retrieve/pii/S0021999198961502}{{A new
  formulation of the Mixed finite element method for solving elliptic and
  parabolic PDE with triangular elements}}, Journal of Computational Physics
  149~(1) (1999) 148--167.
\newblock \href {http://dx.doi.org/10.1006/jcph.1998.6150}
  {\path{doi:10.1006/jcph.1998.6150}}.
\newline\urlprefix\url{https://linkinghub.elsevier.com/retrieve/pii/S0021999198961502}

\bibitem{Younes2004}
A.~Younes, P.~Ackerer, G.~Chavent,
  \href{http://doi.wiley.com/10.1002/nme.874}{{From mixed finite elements to
  finite volumes for elliptic PDEs in two and three dimensions}}, International
  Journal for Numerical Methods in Engineering 59~(3) (2004) 365--388.
\newblock \href {http://dx.doi.org/10.1002/nme.874}
  {\path{doi:10.1002/nme.874}}.
\newline\urlprefix\url{http://doi.wiley.com/10.1002/nme.874}

\bibitem{Puscas2018}
M.~A. Puscas, G.~Ench{\'{e}}ry, S.~Desroziers,
  \href{https://ogst.ifpenergiesnouvelles.fr/10.2516/ogst/2018022}{{Application
  of the mixed multiscale finite element method to parallel simulations of
  two-phase flows in porous media}}, Oil {\&} Gas Science and Technology –
  Revue d'IFP Energies nouvelles 73 (2018) 38.
\newblock \href {http://dx.doi.org/10.2516/ogst/2018022}
  {\path{doi:10.2516/ogst/2018022}}.
\newline\urlprefix\url{https://ogst.ifpenergiesnouvelles.fr/10.2516/ogst/2018022}

\bibitem{Devloo2019}
P.~Devloo, W.~Teng, C.-S. Zhang,
  \href{http://www.techscience.com/CMES/v119n1/29783}{{Multiscale hybrid-mixed
  finite element method for flow simulation in fractured porous media}},
  Computer Modeling in Engineering {\&} Sciences 119~(1) (2019) 145--163.
\newblock \href {http://dx.doi.org/10.32604/cmes.2019.04812}
  {\path{doi:10.32604/cmes.2019.04812}}.
\newline\urlprefix\url{http://www.techscience.com/CMES/v119n1/29783}

\bibitem{Raviart1977}
P.~A. Raviart, J.~M. Thomas,
  \href{http://link.springer.com/10.1007/BFb0064470}{{A mixed finite element
  method for 2-nd order elliptic problems}}, in: I.~Galligani, E.~Magenes
  (Eds.), Mathematical Aspects of Finite Element Methods. Lecture Notes in
  Mathematics, Springer, Berlin, Heidelberg, 1977, pp. 292--315.
\newblock \href {http://dx.doi.org/10.1007/BFb0064470}
  {\path{doi:10.1007/BFb0064470}}.
\newline\urlprefix\url{http://link.springer.com/10.1007/BFb0064470}

\bibitem{Wang2018}
K.~Wang, H.~Liu, J.~Luo, Z.~Chen,
  \href{https://linkinghub.elsevier.com/retrieve/pii/S0377042717303734}{{Efficient
  CPR-type preconditioner and its adaptive strategies for large-scale parallel
  reservoir simulations}}, Journal of Computational and Applied Mathematics 328
  (2018) 443--468.
\newblock \href {http://dx.doi.org/10.1016/j.cam.2017.07.022}
  {\path{doi:10.1016/j.cam.2017.07.022}}.
\newline\urlprefix\url{https://linkinghub.elsevier.com/retrieve/pii/S0377042717303734}

\bibitem{Saad2003}
Y.~Saad,
  \href{http://epubs.siam.org/doi/book/10.1137/1.9780898718003}{{Iterative
  Methods for Sparse Linear Systems}}, Society for Industrial and Applied
  Mathematics, Philadelphia, USA, 2003.
\newblock \href {http://dx.doi.org/10.1137/1.9780898718003}
  {\path{doi:10.1137/1.9780898718003}}.
\newline\urlprefix\url{http://epubs.siam.org/doi/book/10.1137/1.9780898718003}

\bibitem{Yang2015}
A.-L. Yang, G.-F. Zhang, Y.-J. Wu,
  \href{https://linkinghub.elsevier.com/retrieve/pii/S0377042715000072}{{General
  constraint preconditioning iteration method for singular saddle-point
  problems}}, Journal of Computational and Applied Mathematics 282 (2015)
  157--166.
\newblock \href {http://dx.doi.org/10.1016/j.cam.2014.12.042}
  {\path{doi:10.1016/j.cam.2014.12.042}}.
\newline\urlprefix\url{https://linkinghub.elsevier.com/retrieve/pii/S0377042715000072}

\bibitem{Farrell2019}
P.~E. Farrell, L.~Mitchell, F.~Wechsung,
  \href{https://epubs.siam.org/doi/10.1137/18M1219370}{{An augmented lagrangian
  preconditioner for the 3D stationary incompressible Navier--Stokes equations
  at high Reynolds number}}, SIAM Journal on Scientific Computing 41~(5) (2019)
  A3073--A3096.
\newblock \href {http://dx.doi.org/10.1137/18M1219370}
  {\path{doi:10.1137/18M1219370}}.
\newline\urlprefix\url{https://epubs.siam.org/doi/10.1137/18M1219370}

\bibitem{Bootland2019}
N.~Bootland, A.~Bentley, C.~Kees, A.~Wathen,
  \href{https://epubs.siam.org/doi/10.1137/17M1153674}{{Preconditioners for
  two-phase incompressible Navier--Stokes flow}}, SIAM Journal on Scientific
  Computing 41~(4) (2019) B843--B869.
\newblock \href {http://dx.doi.org/10.1137/17M1153674}
  {\path{doi:10.1137/17M1153674}}.
\newline\urlprefix\url{https://epubs.siam.org/doi/10.1137/17M1153674}

\bibitem{Wu2014}
Y.~Wu, X.-C. Cai,
  \href{https://linkinghub.elsevier.com/retrieve/pii/S0021999113007225}{{A
  fully implicit domain decomposition based ALE framework for three-dimensional
  fluid–structure interaction with application in blood flow computation}},
  Journal of Computational Physics 258 (2014) 524--537.
\newblock \href {http://dx.doi.org/10.1016/j.jcp.2013.10.046}
  {\path{doi:10.1016/j.jcp.2013.10.046}}.
\newline\urlprefix\url{https://linkinghub.elsevier.com/retrieve/pii/S0021999113007225}

\bibitem{Liu2020}
J.~Liu, W.~Yang, M.~Dong, A.~L. Marsden,
  \href{https://linkinghub.elsevier.com/retrieve/pii/S0045782520303078}{{The
  nested block preconditioning technique for the incompressible Navier–Stokes
  equations with emphasis on hemodynamic simulations}}, Computer Methods in
  Applied Mechanics and Engineering 367 (2020) 113122.
\newblock \href {http://dx.doi.org/10.1016/j.cma.2020.113122}
  {\path{doi:10.1016/j.cma.2020.113122}}.
\newline\urlprefix\url{https://linkinghub.elsevier.com/retrieve/pii/S0045782520303078}

\bibitem{Castelletto2016}
N.~Castelletto, J.~A. White, M.~Ferronato,
  \href{https://linkinghub.elsevier.com/retrieve/pii/S0021999116304843}{{Scalable
  algorithms for three-field mixed finite element coupled poromechanics}},
  Journal of Computational Physics 327 (2016) 894--918.
\newblock \href {http://dx.doi.org/10.1016/j.jcp.2016.09.063}
  {\path{doi:10.1016/j.jcp.2016.09.063}}.
\newline\urlprefix\url{https://linkinghub.elsevier.com/retrieve/pii/S0021999116304843}

\bibitem{White2016}
J.~A. White, N.~Castelletto, H.~A. Tchelepi,
  \href{https://linkinghub.elsevier.com/retrieve/pii/S0045782516000104}{{Block-partitioned
  solvers for coupled poromechanics: A unified framework}}, Computer Methods in
  Applied Mechanics and Engineering 303 (2016) 55--74.
\newblock \href {http://dx.doi.org/10.1016/j.cma.2016.01.008}
  {\path{doi:10.1016/j.cma.2016.01.008}}.
\newline\urlprefix\url{https://linkinghub.elsevier.com/retrieve/pii/S0045782516000104}

\bibitem{Chidyagwai2016}
P.~Chidyagwai, S.~Ladenheim, D.~B. Szyld,
  \href{http://epubs.siam.org/doi/10.1137/15M1032156}{{Constraint
  preconditioning for the coupled Stokes--Darcy system}}, SIAM Journal on
  Scientific Computing 38~(2) (2016) A668--A690.
\newblock \href {http://dx.doi.org/10.1137/15M1032156}
  {\path{doi:10.1137/15M1032156}}.
\newline\urlprefix\url{http://epubs.siam.org/doi/10.1137/15M1032156}

\bibitem{Franceschini2019}
A.~Franceschini, N.~Castelletto, M.~Ferronato,
  \href{https://linkinghub.elsevier.com/retrieve/pii/S0045782518304924}{{Block
  preconditioning for fault/fracture mechanics saddle-point problems}},
  Computer Methods in Applied Mechanics and Engineering 344 (2019) 376--401.
\newblock \href {http://dx.doi.org/10.1016/j.cma.2018.09.039}
  {\path{doi:10.1016/j.cma.2018.09.039}}.
\newline\urlprefix\url{https://linkinghub.elsevier.com/retrieve/pii/S0045782518304924}

\bibitem{Liu2016}
H.~Liu, K.~Wang, Z.~Chen, \href{http://doi.wiley.com/10.1002/nla.2017}{{A
  family of constrained pressure residual preconditioners for parallel
  reservoir simulations}}, Numerical Linear Algebra with Applications 23~(1)
  (2016) 120--146.
\newblock \href {http://dx.doi.org/10.1002/nla.2017}
  {\path{doi:10.1002/nla.2017}}.
\newline\urlprefix\url{http://doi.wiley.com/10.1002/nla.2017}

\bibitem{Cusini2015}
M.~Cusini, A.~A. Lukyanov, J.~Natvig, H.~Hajibeygi,
  \href{https://linkinghub.elsevier.com/retrieve/pii/S0021999115004647}{{Constrained
  pressure residual multiscale (CPR-MS) method for fully implicit simulation of
  multiphase flow in porous media}}, Journal of Computational Physics 299
  (2015) 472--486.
\newblock \href {http://dx.doi.org/10.1016/j.jcp.2015.07.019}
  {\path{doi:10.1016/j.jcp.2015.07.019}}.
\newline\urlprefix\url{https://linkinghub.elsevier.com/retrieve/pii/S0021999115004647}

\bibitem{Roy2020}
T.~Roy, T.~B. J{\"{o}}nsth{\"{o}}vel, C.~Lemon, A.~J. Wathen,
  \href{https://epubs.siam.org/doi/10.1137/19M1292023}{{A constrained
  pressure-temperature residual (CPTR) method for non-isothermal multiphase
  flow in porous media}}, SIAM Journal on Scientific Computing 42~(4) (2020)
  B1014--B1040.
\newblock \href {http://dx.doi.org/10.1137/19M1292023}
  {\path{doi:10.1137/19M1292023}}.
\newline\urlprefix\url{https://epubs.siam.org/doi/10.1137/19M1292023}

\bibitem{Wallis1983}
J.~Wallis, \href{http://www.onepetro.org/doi/10.2118/12265-MS}{{Incomplete
  Gaussian elimination as a preconditioning for generalized Conjugate gradient
  acceleration}}, in: SPE Reservoir Simulation Symposium, Society of Petroleum
  Engineers, San Francisco, California, 1983, pp. 325--334.
\newblock \href {http://dx.doi.org/10.2118/12265-MS}
  {\path{doi:10.2118/12265-MS}}.
\newline\urlprefix\url{http://www.onepetro.org/doi/10.2118/12265-MS}

\bibitem{Wallis1985}
J.~Wallis, R.~Kendall, T.~Little,
  \href{http://www.onepetro.org/doi/10.2118/13536-MS}{{Constrained residual
  acceleration of conjugate residual methods}}, in: SPE Reservoir Simulation
  Symposium, Society of Petroleum Engineers, Dallas, Texas, 1985, pp. 415--428.
\newblock \href {http://dx.doi.org/10.2118/13536-MS}
  {\path{doi:10.2118/13536-MS}}.
\newline\urlprefix\url{http://www.onepetro.org/doi/10.2118/13536-MS}

\bibitem{Cao2005}
H.~Cao, H.~A. Tchelepi, J.~R. Wallis, H.~E. Yardumian,
  \href{http://www.onepetro.org/doi/10.2118/96809-MS}{{Parallel scalable
  unstructured CPR-type linear solver for reservoir simulation}}, in: SPE
  Annual Technical Conference and Exhibition, Society of Petroleum Engineers,
  Dallas, Texas, 2005, p. 96809.
\newblock \href {http://dx.doi.org/10.2118/96809-MS}
  {\path{doi:10.2118/96809-MS}}.
\newline\urlprefix\url{http://www.onepetro.org/doi/10.2118/96809-MS}

\bibitem{Patankar1980}
S.~V. Patankar,
  \href{https://www.taylorfrancis.com/books/9781482234213}{{Numerical Heat
  Transfer and Fluid Flow}}, series in Edition, CRC Press, Boca Raton, Florida,
  USA, 1980.
\newblock \href {http://dx.doi.org/10.1201/9781482234213}
  {\path{doi:10.1201/9781482234213}}.
\newline\urlprefix\url{https://www.taylorfrancis.com/books/9781482234213}

\bibitem{Elman2008}
H.~Elman, V.~Howle, J.~Shadid, R.~Shuttleworth, R.~Tuminaro,
  \href{https://linkinghub.elsevier.com/retrieve/pii/S0021999107004330}{{A
  taxonomy and comparison of parallel block multi-level preconditioners for the
  incompressible Navier–Stokes equations}}, Journal of Computational Physics
  227~(3) (2008) 1790--1808.
\newblock \href {http://dx.doi.org/10.1016/j.jcp.2007.09.026}
  {\path{doi:10.1016/j.jcp.2007.09.026}}.
\newline\urlprefix\url{https://linkinghub.elsevier.com/retrieve/pii/S0021999107004330}

\bibitem{Perugia2000}
I.~Perugia, V.~Simoncini,
  \href{http://doi.wiley.com/10.1002/1099-1506{\%}28200010/12{\%}297{\%}3A7/8{\%}3C585{\%}3A{\%}3AAID-NLA214{\%}3E3.0.CO{\%}3B2-F}{{Block-diagonal
  and indefinite symmetric preconditioners for mixed finite element
  formulations}}, Numerical Linear Algebra with Applications 7~(7-8) (2000)
  585--616.
\newblock \href
  {http://dx.doi.org/10.1002/1099-1506(200010/12)7:7/8<585::AID-NLA214>3.0.CO;2-F}
  {\path{doi:10.1002/1099-1506(200010/12)7:7/8<585::AID-NLA214>3.0.CO;2-F}}.
\newline\urlprefix\url{http://doi.wiley.com/10.1002/1099-1506{\%}28200010/12{\%}297{\%}3A7/8{\%}3C585{\%}3A{\%}3AAID-NLA214{\%}3E3.0.CO{\%}3B2-F}

\bibitem{Maryska2000}
J.~Maryska, M.~Rozlozn{\'{i}}k, M.~Tuma,
  \href{http://epubs.siam.org/doi/10.1137/S1064827598339608}{{Schur complement
  systems in the mixed-hybrid finite element approximation of the potential
  fluid flow problem}}, SIAM Journal on Scientific Computing 22~(2) (2000)
  704--723.
\newblock \href {http://dx.doi.org/10.1137/S1064827598339608}
  {\path{doi:10.1137/S1064827598339608}}.
\newline\urlprefix\url{http://epubs.siam.org/doi/10.1137/S1064827598339608}

\bibitem{Benzi2005}
M.~Benzi, G.~H. Golub, J.~Liesen,
  \href{https://www.cambridge.org/core/product/identifier/S0962492904000212/type/journal{\_}article}{{Numerical
  solution of saddle point problems}}, Acta Numerica 14 (2005) 1--137.
\newblock \href {http://dx.doi.org/10.1017/S0962492904000212}
  {\path{doi:10.1017/S0962492904000212}}.
\newline\urlprefix\url{https://www.cambridge.org/core/product/identifier/S0962492904000212/type/journal{\_}article}

\bibitem{Ferronato2019}
M.~Ferronato, A.~Franceschini, C.~Janna, N.~Castelletto, H.~A. Tchelepi,
  \href{https://linkinghub.elsevier.com/retrieve/pii/S0021999119305856}{{A
  general preconditioning framework for coupled multiphysics problems with
  application to contact- and poro-mechanics}}, Journal of Computational
  Physics 398 (2019) 108887.
\newblock \href {http://dx.doi.org/10.1016/j.jcp.2019.108887}
  {\path{doi:10.1016/j.jcp.2019.108887}}.
\newline\urlprefix\url{https://linkinghub.elsevier.com/retrieve/pii/S0021999119305856}

\bibitem{Gambolati2015}
G.~Gambolati, P.~Teatini,
  \href{http://doi.wiley.com/10.1002/2014WR016841}{{Geomechanics of subsurface
  water withdrawal and injection}}, Water Resources Research 51~(6) (2015)
  3922--3955.
\newblock \href {http://dx.doi.org/10.1002/2014WR016841}
  {\path{doi:10.1002/2014WR016841}}.
\newline\urlprefix\url{http://doi.wiley.com/10.1002/2014WR016841}

\bibitem{Matringe2007}
S.~F. Matringe, R.~Juanes, H.~A. Tchelepi,
  \href{http://www.onepetro.org/doi/10.2118/106117-MS}{{Mixed-finite-element
  and related-control-volume discretizations for reservoir simulation on
  three-dimensional unstructured grids}}, in: SPE Reservoir Simulation
  Symposium, Society of Petroleum Engineers, 2007, p. 106117.
\newblock \href {http://dx.doi.org/10.2118/106117-MS}
  {\path{doi:10.2118/106117-MS}}.
\newline\urlprefix\url{http://www.onepetro.org/doi/10.2118/106117-MS}

\bibitem{Huyakorn1983}
P.~S. Huyakorn, G.~F. Pinder, {Computational Methods in Subsurface Flow},
  Academic Press, New York, USA, 1983.

\bibitem{Zienkiewicz2000}
O.~C. Zienkiewicz, R.~L. Taylor, {The Finite Element Method, Volume 1: The
  Basis}, 5th Edition, Butterworth-Heinemann, Oxford, England, 2000.

\bibitem{VanderVorst1992}
H.~A. van~der Vorst,
  \href{http://epubs.siam.org/doi/10.1137/0913035}{{Bi-CGSTAB: A fast and
  smoothly converging variant of Bi-CG for the solution of nonsymmetric linear
  systems}}, SIAM Journal on Scientific and Statistical Computing 13~(2) (1992)
  631--644.
\newblock \href {http://dx.doi.org/10.1137/0913035}
  {\path{doi:10.1137/0913035}}.
\newline\urlprefix\url{http://epubs.siam.org/doi/10.1137/0913035}

\bibitem{Saad1986}
Y.~Saad, M.~H. Schultz,
  \href{http://epubs.siam.org/doi/10.1137/0907058}{{GMRES: A generalized
  minimal residual algorithm for solving nonsymmetric linear systems}}, SIAM
  Journal on Scientific and Statistical Computing 7~(3) (1986) 856--869.
\newblock \href {http://dx.doi.org/10.1137/0907058}
  {\path{doi:10.1137/0907058}}.
\newline\urlprefix\url{http://epubs.siam.org/doi/10.1137/0907058}

\bibitem{Benzi2002}
M.~Benzi,
  \href{https://linkinghub.elsevier.com/retrieve/pii/S0021999102971767}{{Preconditioning
  techniques for large linear systems: A survey}}, Journal of Computational
  Physics 182~(2) (2002) 418--477.
\newblock \href {http://dx.doi.org/10.1006/jcph.2002.7176}
  {\path{doi:10.1006/jcph.2002.7176}}.
\newline\urlprefix\url{https://linkinghub.elsevier.com/retrieve/pii/S0021999102971767}

\bibitem{Wathen2015}
A.~J. Wathen,
  \href{https://www.cambridge.org/core/product/identifier/S0962492915000021/type/journal{\_}article}{{Preconditioning}},
  Acta Numerica 24 (2015) 329--376.
\newblock \href {http://dx.doi.org/10.1017/S0962492915000021}
  {\path{doi:10.1017/S0962492915000021}}.
\newline\urlprefix\url{https://www.cambridge.org/core/product/identifier/S0962492915000021/type/journal{\_}article}

\bibitem{Bergamaschi2008}
L.~Bergamaschi, M.~Ferronato, G.~Gambolati,
  \href{https://linkinghub.elsevier.com/retrieve/pii/S002199910800421X}{{Mixed
  constraint preconditioners for the iterative solution of FE coupled
  consolidation equations}}, Journal of Computational Physics 227~(23) (2008)
  9885--9897.
\newblock \href {http://dx.doi.org/10.1016/j.jcp.2008.08.002}
  {\path{doi:10.1016/j.jcp.2008.08.002}}.
\newline\urlprefix\url{https://linkinghub.elsevier.com/retrieve/pii/S002199910800421X}

\bibitem{Ferronato2010}
M.~Ferronato, L.~Bergamaschi, G.~Gambolati,
  \href{http://doi.wiley.com/10.1002/nme.2702}{{Performance and robustness of
  block constraint preconditioners in finite element coupled consolidation
  problems}}, International Journal for Numerical Methods in Engineering 81~(3)
  (2010) 381--402.
\newblock \href {http://dx.doi.org/10.1002/nme.2702}
  {\path{doi:10.1002/nme.2702}}.
\newline\urlprefix\url{http://doi.wiley.com/10.1002/nme.2702}

\bibitem{Ferronato2012}
M.~Ferronato,
  \href{https://www.hindawi.com/archive/2012/127647/}{{Preconditioning for
  sparse linear systems at the dawn of the 21st century: History, current
  developments, and future perspectives}}, ISRN Applied Mathematics 2012 (2012)
  127647.
\newblock \href {http://dx.doi.org/10.5402/2012/127647}
  {\path{doi:10.5402/2012/127647}}.
\newline\urlprefix\url{https://www.hindawi.com/archive/2012/127647/}

\bibitem{Bui2018}
Q.~M. Bui, L.~Wang, D.~Osei-Kuffuor,
  \href{https://linkinghub.elsevier.com/retrieve/pii/S0309170817306875}{{Algebraic
  multigrid preconditioners for two-phase flow in porous media with phase
  transitions}}, Advances in Water Resources 114 (2018) 19--28.
\newblock \href {http://dx.doi.org/10.1016/j.advwatres.2018.01.027}
  {\path{doi:10.1016/j.advwatres.2018.01.027}}.
\newline\urlprefix\url{https://linkinghub.elsevier.com/retrieve/pii/S0309170817306875}

\bibitem{Bui2020}
Q.~M. Bui, D.~Osei-Kuffuor, N.~Castelletto, J.~A. White,
  \href{https://epubs.siam.org/doi/10.1137/19M1256117}{{A scalable multigrid
  reduction framework for multiphase poromechanics of heterogeneous media}},
  SIAM Journal on Scientific Computing 42~(2) (2020) B379--B396.
\newblock \href {http://dx.doi.org/10.1137/19M1256117}
  {\path{doi:10.1137/19M1256117}}.
\newline\urlprefix\url{https://epubs.siam.org/doi/10.1137/19M1256117}

\bibitem{Christie2001}
M.~Christie, M.~Blunt,
  \href{http://www.onepetro.org/doi/10.2118/66599-MS}{{Tenth SPE comparative
  solution project: A comparison of upscaling techniques}}, in: SPE Reservoir
  Simulation Symposium, Society of Petroleum Engineers, Houston, Texas, 2001,
  pp. 308--317.
\newblock \href {http://dx.doi.org/10.2118/66599-MS}
  {\path{doi:10.2118/66599-MS}}.
\newline\urlprefix\url{http://www.onepetro.org/doi/10.2118/66599-MS}

\bibitem{Cao2002}
H.~Cao, {Development of techniques for general purpose simulators}, Phd thesis,
  Stanford University (2002).

\bibitem{Coats2001}
K.~Coats, \href{http://www.onepetro.org/doi/10.2118/69225-MS}{{IMPES stability:
  The stable step}}, in: SPE Reservoir Simulation Symposium, Society of
  Petroleum Engineers, Houston, Texas, 2001, p. 69225.
\newblock \href {http://dx.doi.org/10.2118/69225-MS}
  {\path{doi:10.2118/69225-MS}}.
\newline\urlprefix\url{http://www.onepetro.org/doi/10.2118/69225-MS}

\bibitem{Abushaikha2020}
A.~S. Abushaikha, K.~M. Terekhov,
  \href{https://linkinghub.elsevier.com/retrieve/pii/S002199911930899X}{{A
  fully implicit mimetic finite difference scheme for general purpose
  subsurface reservoir simulation with full tensor permeability}}, Journal of
  Computational Physics 406 (2020) 109194.
\newblock \href {http://dx.doi.org/10.1016/j.jcp.2019.109194}
  {\path{doi:10.1016/j.jcp.2019.109194}}.
\newline\urlprefix\url{https://linkinghub.elsevier.com/retrieve/pii/S002199911930899X}

\end{thebibliography}

\end{document}